\DeclareSymbolFontAlphabet{\mathbb}{AMSb} 
\DeclareSymbolFontAlphabet{\mathbbl}{bbold}
\newcommand{\Prism}{{{\mathbbl{\Delta}}}}
\numberwithin{equation}{subsection}
\theoremstyle{plain}
\newtheorem{thm}[subsection]{Theorem}
\newtheorem{prop}[subsection]{Proposition}
\newtheorem{lemma}[subsection]{Lemma}
\newtheorem{cor}[subsection]{Corollary}
\theoremstyle{definition}
\newtheorem{defn}[subsection]{Definition}
\theoremstyle{remark}
\newtheorem{rem}[subsection]{Remark}
\newtheorem*{rem*}{Remark}
\newtheorem{final remark}[subsection]{Final Remark}
\newtheorem{ex}[subsection]{Example}
\newcommand*\bigcdot{\mathpalette\bigcdot@{.65}}
\newcommand*\bigcdot@[2]{\mathbin{\vcenter{\hbox{\scalebox{#2}{$\m@th#1\bullet$}}}}}
\begin{document}
\title{Overconvergent prismatic cohomology}
\author{Andreas Langer}
\begin{abstract}
In this note I define an overconvergent version of prisms and prismatic cohomology as introduced by Bhatt and Scholze and show that overconvergent prismatic cohomology specialises to $p$-adic cohomologies, like Monsky-Washnitzer resp. rigid cohomology for smooth varieties over a perfect field, the de Rham cohomology of smooth weak formal schemes over a perfectoid ring and the \'{e}tale cohomology of its generic fibre. Besides, I give an overconvergent version of the complex $A\Omega$ of Bhatt-Morrow-Scholze and relate it to overconvergent prismatic cohomology.
\end{abstract}
\address{Harrison Building, University of Exeter, Exeter, EX4 4QF, Devon, UK}
\email{a.langer@exeter.ac.uk}
\date{August 06, 2023 \\ This research was supported by EPSRC grant EP/T005351/1}
\maketitle
\pagestyle{myheadings}

\section{Introduction}

In their fundamental work \cite{BS22} Bhatt and Scholze introduced a new $p$-adic cohomology theory, called prismatic cohomology, which has a universal character and is based on their notion of prisms and the prismatic site. Fix a prime number $p$. A prism is a pair $(A,I)$ that consists of a $\delta$-ring $A$ and an ideal $I\subset A$ satisfying some properties (\cite[Definition 1.1]{BS22}) such that $A$ is $(p,I)$-adically complete. The $\delta$-structure on $A$ induces a Frobenius lift $\varphi:A\rightarrow A$. In all cases that we consider $I$ is a principal ideal generated by a non-zero divisor $d$. The main examples in \cite{BS22} are the perfect prisms (1) and (2) and the non-perfect prism (3):
\begin{enumerate}[(1)]
\item $(W(k),(p))$ for a perfect field $k$ of characteristic $p$. Then $\varphi$ is the Witt vector Frobenius and $d=p$. (It is called the crystalline prism).
\item Let $\mathbb{C}_{p}$ be the completion of an algebraic closure of $\mathbb{Q}_{p}$ and $\mathcal{O}=\mathcal{O}_{\mathbb{C}_{p}}$ its ring of integers with tilt $\mathcal{O}^{\flat}=\displaystyle\varprojlim_{x\mapsto x^{p}}\mathcal{O}/p$. There is a natural surjection 
\begin{equation*}
\theta:A_{\mathrm{inf}}(\mathcal{O}):=W(\mathcal{O}^{\flat})\rightarrow\mathcal{O}
\end{equation*}
with $\ker\theta$ a principal ideal, and let $\varphi$ be the Witt vector Frobenius. Then $(A_{\mathrm{inf}}(\mathcal{O}),\ker\theta)$ is a perfect prism.
\end{enumerate}  
An important example of a non-perfect prism is the Breuil-Kisin prism:
\begin{enumerate}[(1)]
\setcounter{enumi}{2}
\item Let $K$ be a finite totally ramified extension of $W(k)[1/p]$. Fix a uniformiser $\pi$ and let $E(u)\in W(k)[u]$ denote the Eisenstein polynomial such that $E(\pi)=0$. Then $(W(k)\llbracket u\rrbracket,(E(u)))$ is a prism with $\varphi$ sending $u$ to $u^{p}$. 
\end{enumerate}

Now let $(A,I)$ be a prism which is bounded, i.e. $(A/I)[p^{\infty}]\cong(A/I)[p^{n}]$ for some $n$. For a smooth $p$-adic formal scheme $X$ over $\mathrm{Spf}(A/I)$ Bhatt and Scholze introduced the prismatic site $(X/A)_{\Prism}$ with structure sheaf $\mathcal{O}_{\Prism}$. An object of the prismatic site is a bounded prism $(B,I)$ over $(A,I)$ with a map $\mathrm{Spf}(B/IB)\rightarrow X$ satisfying a compatibility condition. Then the prismatic cohomology is defined as 
\begin{equation*}
R\Gamma_{\Prism}(X/A):=R\Gamma((X/A)_{\Prism},\mathcal{O}_{\Prism})\,.
\end{equation*}
The main comparison results, demonstrating the universal character of prismatic cohomology, are stated in \cite[Theorem 1.8]{BS22}. In particular, Bhatt and Scholze show that if $X$ is smooth and proper, in the above examples (1)-(3) $R\Gamma_{\Prism}(X/A)$ is a perfect $(p,d)$-complete complex in $D(A)$, equipped with a $\varphi_{A}$-linear operator $\varphi$, and 
\begin{itemize}
\item[-] a Frobenius descent of crystalline cohomology $R\Gamma_{\mathrm{cris}}(X/W(k))$ in case (1)
\item[-] a Frobenius descent of the $A_{\mathrm{inf}}$-cohomology $R\Gamma_{\mathrm{inf}}(X)$ defined in \cite{BMS18} in case (2)
\item[-] is isomorphic to Breuil-Kisin cohomology $R\Gamma_{\mathrm{BK}}(X)$ as defined in \cite{BMS19} in case (3).
\end{itemize}
Then Koshikawa and Yao \cite{Kos22}, \cite{KY23} introduced a logarithmic version of prismatic cohomology, by extending the notion of $\delta$-ring to the log context, leading to the definition of logarithmic prisms and the logarithmic prismatic site. Then they establish, in the log-smooth case, analogous comparison results for logarithmic prismatic cohomology (log-crystalline, log-de Rham, \'{e}tale, Hodge-Tate) see \cite[Theorem 2]{KY23}. They also recover the $A_{\mathrm{inf}}$-cohomology in the semistable case constructed by \v{C}esnavi\v{c}ius and Koshikawa \cite{CK19} and construct Breuil-Kisin cohomology in the semistable case by Breuil-Kisin descent along the canonical map of prisms
\begin{equation*}
(W(k)\llbracket u\rrbracket, (E(u)))\rightarrow (A_{\mathrm{inf}}(\mathcal{O}),\ker\theta); \ \ u\mapsto [\pi^{\flat}]
\end{equation*}
for a compatible system $\pi,\pi^{1/p},\ldots$ of $p$-power roots of $\pi$ in $\mathcal{O}$. 

The original motivation in our work was to impose an overconvergence condition on the prismatic site such that the hypercohomology of the corresponding overconvergent structure sheaf rationally recovers -- when the base prism is the crystalline prism -- the rigid cohomology of smooth varieties over a perfect field $k$. This naturally led to the notion of dagger prisms defined below. Although we only treat the smooth case it is probably straightforward to extend the comparison with rigid cohomology to the semistable case as well, by introducing dagger versions of the logarithmic prisms and logarithmic prismatic site of Koshikawa-Yao, which will lead to a more general definition of overconvergent logarithmic prismatic cohomology. If one replaces $p$-adically formal $\mathrm{Spf}\,\mathcal{O}$-schemes by weak formal schemes over $\mathrm{Spf}\,\mathcal{O}$, then our construction of the overconvergent prismatic site leads to dagger versions of the de Rham and \'{e}tale comparison results as stated in \cite[Theorem 1.8]{BS22}, see Theorem \ref{etale comparison intro} and Theorem \ref{q-de Rham intro} below. Using the pro-\'{e}tale site on affinoid dagger varieties we will construct a dagger version of $A_{\mathrm{inf}}$-cohomology and compare it with overconvergent prismatic cohomology. The proof relies on an almost purity property for overconvergent Witt vectors on perfectoid dagger algebras.

I consider this work as an another endeavour to introduce the notion of overconvergence into the constructions of relative $p$-adic Hodge theory. It was also inspired by the recent progress on the study of prismatic $F$-crystals and their close connection to $p$-adic Galois representations. To explain this, let $K$ be a $p$-adic field, $K_{\infty}$ the completion of the infinite cyclotomic extension $K(\mu_{p^{\infty}})$ with $K_{\infty}^{\flat}$ its tilt. Let $\Gamma_{K}=\mathrm{Gal}(K_{\infty}/K)$. Fontaine established an equivalence between the category of \'{e}tale $(\varphi,\Gamma_{K})$-modules $\mathrm{Mod}^{\varphi, \Gamma_{K},\mathrm{\acute{e}t}}_{W(K_{\infty}^{\flat})}$ and the category $\mathrm{Rep}_{\mathbb{Z}_{p}}(G_{K})$ of finite free $\mathbb{Z}_{p}$-representations of the absolute Galois group. Wu \cite{Wu21} gave a prismatic approach to this equivalence by comparing both categories with the category of prismatic $F$-crystals in $\mathcal{O}_{\Prism}[1/I]_{p}^{\wedge}$-modules over the absolute prismatic site $(\mathcal{O}_{K})_{\Prism}$. Then using results of Bhatt-Scholze \cite{BS23}, Marks \cite{Mar23} generalised the result of Wu and gave a geometric relativisation by replacing $\mathrm{Rep}_{\mathbb{Z}_{p}}(G_{K})$ by $\mathcal{O}_{K}$-local systems on the generic fibre of a formal scheme over $\mathrm{Spf}\,\mathcal{O}_{L}$ ($L$ a finite extension of $\mathbb{Q}_{p}$) and relating this to the category of Laurent $F$-crystals on the absolute prismatic site, i.e. vector bundles over a certain structure sheaf $\mathcal{O}_{\Prism}[1/I]_{\pi}^{\wedge}$, equipped with an isomorphism $\varphi^{\ast}\mathcal{M}\cong\mathcal{M}$, for a fixed uniformiser $\pi\in\mathcal{O}_{L}$. To $\pi$ one can associate a Lubin-Tate formal group $\mathcal{G}$ over $\mathcal{O}_{L}$. Then, for a $p$-adic field $K\supset L$, let $K_{\infty}$ be the $p$-adic completion of $K(\mathcal{G}[\pi^{\infty}])$, $K_{\infty}^{\flat}$ its tilt and $\Gamma_{K}=\mathrm{Gal}(K_{\infty}/K)$. Then Kisin and Ren \cite{KR09} construct a period ring $A_{K}\subset W(K_{\infty}^{\flat})\otimes_{W(\mathbb{F}_{q})}\mathcal{O}_{L}$ and extend Fontaine's theory to Lubin-Tate $(\varphi_{q},\Gamma_{K})$-modules to obtain an equivalence between the category of \'{e}tale $(\varphi_{q},\Gamma_{K})$-modules $\mathrm{Mod}_{A_{K}}^{\varphi_{q},\Gamma_{K},\mathrm{\acute{e}t}}$ and the category $\mathrm{Rep}_{\mathcal{O}_{K}}(G_{L})$ of finite free $\mathcal{O}_{K}$-representations of $G_{L}$. Marks shows that the result of Kisin-Ren is a special case of his result on Laurent $F$-crystals on the absolute prismatic site (\cite[Theorem 1.6]{Mar23}). Following ideas of Cherbonnier and Colmez \cite{CC98}, Forquaux and Xie \cite{FX13} introduced the notion of overconvergence to $K$-linear representations of $G_{L}$ and established an equivalence between the category of overconvergent $K$-representations of $G_{L}$ and \'{e}tale $(\varphi_{q},\Gamma_{K})$-modules over the Robba ring (\cite[Proposition 1.5]{FX13}). It is therefore natural to search for a category of overconvergent Laurent $F$-crystals on the (overconvergent) prismatic site of weak formal schemes and to relate it to \'{e}tale local systems on the generic fibre in a way that recovers the result of Forquaux-Xie. This will be a future project.

In the following, we give the basic definitions of dagger prisms and overconvergent prismatic cohomology and will state the main results of the paper.

We fix a perfect base prism $(A,I$) \cite[Definition 1.1 and Example 1.3]{BS22} and assume that $I=(d)$ is principal. Our main examples are $(A,I)=(W(k),(p))$, $k$ a perfect field of characteristic $p>0$, and $(A,I)=(A_{\mathrm{inf}}(\mathcal{O})=W(\mathcal{O}^{\flat}),I=(d))$ where $\mathcal{O}=\mathcal{O}_{\mathbb{C}_{p}}$, $\mathcal{O}^{\flat}$ its tilt and $d$ a generator of the kernel of the ghost map $\theta:W(\mathcal{O}^{\flat})\rightarrow\mathcal{O}$ (for example, let $\epsilon=(1,\zeta_{p},\zeta_{p^{2}},\ldots)\in\mathcal{O}^{\flat}$ and $d=1+[\epsilon^{1/p}]+\cdots+[\epsilon^{1/p}]^{p-1}$).

\begin{defn}
A dagger prism of finite type over $A$ is a pair $(S,\varphi)$ where $S$ is an $A$-algebra of finite type and $\varphi:S\rightarrow S$ is a Frobenius lift such that the following properties hold:
\begin{enumerate}[i)]
\item $S$ is $J=(p,d)$-adically separated.
\item The $J=(p,d)$-adic completion $\widehat{S}$ of $S$, together with the induced lifting $\widehat{\varphi}$ of the Frobenius, is a bounded prism and $A\rightarrow\widehat{S}$ is a map of prisms.
\item There exists finitely many elements $x_{1},\ldots, x_{r}\in S$ such that for any $s\in\widehat{S}$ we have:
\begin{itemize}
\item[-] \begin{equation*}
s=\sum_{\kappa\in\mathbb{N}^{r}}a_{\kappa}\underline{x}^{\kappa}
\end{equation*}
and for $m\in\mathbb{N}$, $a_{\kappa}\in J^{m}$ up to finitely many $\kappa$.
\item[-] $s\in S$ if and only if there exists $\epsilon>0$ and $C\in\mathbb{R}$ such that 
\begin{equation*}
\gamma_{\epsilon}(s):=\inf_{\kappa\in\mathbb{N}^{r}}(v_{J}(a_{\kappa})-\epsilon|\kappa|)>C\,.
\end{equation*}
Here $v_{J}(a_{\kappa}):=n$ if $a_{\kappa}\in J^{n}$ and $a_{\kappa}\notin J^{n+1}$.
\end{itemize}
\end{enumerate}
\end{defn}

\begin{rem}
\begin{enumerate}[i)]
\item $S$ is weakly complete with respect to the ideal $J=(p,d)$ in the sense of \cite[Definition 1.2]{MW68}
\item If $\overline{S}=S/J$ is smooth over $\overline{A}=A/J$ and $S$ is a flat $A$-algebra, then $S$ is a weak formalisation of $\overline{S}$ in the sense of \cite[Definition 3.2]{MW68}.
\item Assume $\overline{S}$ is smooth over $\overline{A}$ or a complete transversal intersection and $S$ is a weak formalisation. Then $S$ is a very smooth lifting of $\overline{S}$ to $(A,I)$ in the sense of \cite[Definition 2.5]{MW68}. In particular, given a diagram
\begin{equation*}
\begin{tikzpicture}[descr/.style={fill=white,inner sep=1.5pt}]
        \matrix (m) [
            matrix of math nodes,
            row sep=2.5em,
            column sep=2.5em,
            text height=1.5ex, text depth=0.25ex
        ]
        { \ & \ & S \\
       S & S/J & S/J \\};

        \path[overlay,->, font=\scriptsize] 
        (m-1-3) edge node[right]{$\pi$}(m-2-3)
        (m-2-1) edge (m-2-2)
        (m-2-2) edge node[above]{$\overline{\varphi}$}(m-2-3)
        ;
        
         \path[dashed,->, font=\scriptsize] 
         (m-2-1) edge (m-1-3)
         ;
                                                
\end{tikzpicture}
\end{equation*}
where $\overline{\varphi}$ is the Frobenius, there exists a lifting of Frobenius $\varphi$ on $S$ and $(\widehat{S},\varphi)$ is a prism \cite[Theorem 3.6 and Definition 2.4]{MW68}.
\item $S/dS$ is a weakly complete $A/d$-algebra with respect to the $p$-adic topology.  
\end{enumerate}
\end{rem}

\begin{defn}
An $A$-algebra is called a dagger prism if it is a direct limit $\varinjlim S_{I}$ of dagger prisms of finite type $S_{I}$ with generators $I=(x_{1},\ldots, x_{r})$.
\end{defn}

\begin{ex}
\begin{itemize}
\item[-] Let $(A,(d))=(W(k),(p))$ and $\overline{S}$ a smooth $k$-algebra. Then a Monsky-Washnitzer lift $S^{\dagger}$ of $\overline{S}$ defines a dagger prism over $(W(k),(p))$.
\item[-] Let $S$ be a dagger prism of finite type. The perfection $\varinjlim_{\varphi}S=\varinjlim(S\xrightarrow{\varphi}S\xrightarrow{\varphi}S\xrightarrow{\varphi}\cdots)$ is a dagger prism. A variant of these perfect prisms which we will call dagger perfections will be very important. For a dagger prism $S$ of finite type, we can, by Lemma \ref{Lemma} below, write $S=\varinjlim_{\epsilon}S_{\epsilon}$ where $S_{\epsilon}$ is a $(p,d)$-adically complete subring of $S$. Let $\widehat{S}$ be the associated prism. Let $\widehat{(S_{\epsilon}/p)^{\mathrm{perf}}}$ be the completion of the colimit perfection of $S_{\epsilon}/p$. Define
\begin{equation*}
W^{\dagger}(\widehat{(S_{\epsilon}/p)^{\mathrm{perf}}})=\{(z_{0},z_{1},\ldots)\in W(\widehat{(S_{\epsilon}/p)^{\mathrm{perf}}})\,:\, \varinjlim_{i}(i+\gamma_{\epsilon}(z_{i})/p^{i})=\infty \}\,.
\end{equation*}
\end{itemize}

\begin{defn}\label{0.5}
With the above notation, $S^{\dagger,\mathrm{perf}}:=\varinjlim_{\epsilon}W^{\dagger}(\widehat{(S_{\epsilon}/p)^{\mathrm{perf}}})$ is called the dagger perfection of $S$.
\end{defn}

\begin{rem}
Let $\widehat{(\widehat{S}^{\mathrm{perf}})}$ be the $(p,d)$-completed perfection of the prism $\widehat{S}$. By \cite[Lemma 3.9 and Theorem 3.10]{BS22}, $\widehat{(\widehat{S}^{\mathrm{perf}})}=W\left(\widehat{(\widehat{S}/p)^{\mathrm{perf}}}\right)$ and we have a commutative diagram
\begin{equation*}
\begin{tikzpicture}[descr/.style={fill=white,inner sep=1.5pt}]
        \matrix (m) [
            matrix of math nodes,
            row sep=2.5em,
            column sep=2.5em,
            text height=1.5ex, text depth=0.25ex
        ]
        { S & S^{\dagger,\mathrm{perf}} \\
       \widehat{S} & W\left(\widehat{(\widehat{S}/p)^{\mathrm{perf}}}\right) \\};

        \path[overlay,->, font=\scriptsize] 
        (m-1-1) edge (m-1-2)
        (m-2-1) edge (m-2-2)
        (m-1-1) edge (m-2-1)
        (m-1-2) edge ($(m-2-2)+(0,0.5)$)
        ;
                                                        
\end{tikzpicture}
\end{equation*}
Since $S^{\dagger,\mathrm{perf}}$ is weakly complete with respect to the $(p,d)$-adic topology and perfect, the map $S\rightarrow\widehat{S}\rightarrow W\left(\widehat{(\widehat{S}/p)^{\mathrm{perf}}}\right)$ factors through $S^{\dagger,\mathrm{perf}}$ and $S^{\dagger,\mathrm{perf}}$ contains the perfection $\varinjlim_{\varphi}S$ of $S$.
\end{rem}

\end{ex}

In analogy to the prismatic site \cite[\S4]{BS22}, we now define the dagger prismatic site.

\begin{defn}
Let $(A,(d))$ be a perfect prism and $X$ a $p$-adic weak formal $\mathrm{Spf}(A/d)$-scheme. The dagger prismatic site $(X/A)_{\Prism}^{\dagger}$ is the opposite category of the category of dagger prisms $(B,(d))$ over $(A,(d))$ with a map $\mathrm{Spf}^{\dagger}(B/d)\rightarrow X$, where $\mathrm{Spf}^{\dagger}(B/d)$ denotes the affine weak formal $p$-adic scheme associated to the weakly complete $A/d$-algebra $B/d$. We endow an object $(B,(d))$ with faithfully flat covers $(B,(d))\rightarrow (C,(d))$ and write an object in $(X/A)_{\Prism}^{\dagger}$ as 
\begin{equation*}
\mathrm{Spf}^{\dagger}B\leftarrow\mathrm{Spf}^{\dagger}(B/d)\rightarrow X\,.
\end{equation*}
Let $(X/A)_{\Prism}^{\dagger,\mathrm{perf}}\subset (X/A)_{\Prism}^{\dagger}$ be the full subcategory of objects $\mathrm{Spf}^{\dagger}B\leftarrow\mathrm{Spf}^{\dagger}(B/d)\rightarrow X$ for which $B$ is a dagger perfection, as in Definition \ref{0.5}.
\end{defn}

Note that the weak formal scheme $\mathrm{Spf}^{\dagger}\,B$ is well-defined: The dagger prism $B$ defines a presheaf on the underlying formal scheme $\mathrm{Spf}\,\widehat{B}$ in the obvious way and since the ideal of definition is finitely generated (generated by $p$ and $d$) one can follow Meredith's proof \cite{Mer72} to show that the presheaf is in fact a sheaf, defining $\mathrm{Spf}^{\dagger}\,B$.

Our main example is as follows:

\begin{ex}
Let $A=W(\mathcal{O}^{\flat})$, $S=A^{\dagger}\langle U_{1}^{\pm 1},\ldots, U_{d}^{\pm 1}\rangle$. Then define $A^{\dagger}\langle U_{1}^{\pm 1/p^{\infty}},\ldots, U_{d}^{\pm 1/p^{\infty}}\rangle$ as $S^{\dagger\mathrm{perf}}$. Note that there is a canonical isomorphism of the $(p,d)$-completed $A$-algebra $A\langle\underline{U}^{\pm 1/p^{\infty}}\rangle\xrightarrow{\sim}W(\mathcal{O}\langle\underline{T}^{\pm 1/p^{\infty}}\rangle^{\flat})$ given by $U_{i}^{1/p^{r}}\mapsto [(T_{i}^{1/p^{r}},T_{i}^{1/p^{r+1}},\ldots,)]$ \cite[p. 70]{BMS18}. We define $W^{\dagger}(\mathcal{O}^{\dagger}\langle T_{1}^{\pm 1/p^{\infty}},\ldots, T_{d}^{\pm 1/p^{\infty}}\rangle^{\flat})$ as the image of $A^{\dagger}\langle U_{1}^{\pm 1/p^{\infty}},\ldots, U_{d}^{\pm 1/p^{\infty}}\rangle$ under this isomorphism. It is a perfect $(p,d)$-weakly complete subalgebra of $W(\mathcal{O}\langle\underline{T}^{\pm 1/p^{\infty}}\rangle^{\flat})$.
\end{ex}

In the following we define structure sheaves on the dagger prismatic site. We first consider the affine case, so let $X=\mathrm{Spf}^{\dagger}R$ be a weak formal $\mathcal{O}$-scheme. We define two presheaves on $(R/A)^{\dagger}_{\Prism}$  by
\begin{equation*}
\mathcal{O}^{\dagger}_{\Prism}(R\rightarrow B/dB\leftarrow B)=B
\end{equation*}
and 
\begin{equation*}
\overline{\mathcal{O}}^{\dagger}_{\Prism}(R\rightarrow B/dB\leftarrow B)=B/d\,.
\end{equation*}
Both presheaves are equipped with an action of $\varphi$. We will show that $\mathcal{O}^{\dagger}_{\Prism}$ and $\overline{\mathcal{O}}^{\dagger}_{\Prism}$ are sheaves, and we point out that in the affine case presheaf cohomology computes the correct cohomology (see for example \cite[Lecture V]{Bha18a}).

\begin{defn}
\begin{enumerate}[a)]
\item We define overconvergent prismatic cohomology by 
\begin{equation*}
\Prism_{R/A}^{\dagger}:=R\Gamma((R/A)_{\Prism}^{\dagger},\mathcal{O}_{\Prism}^{\dagger})\,.
\end{equation*}
It is represented by a complex of dagger prisms over $A$ (this follows from the discussion after Lemma \ref{stacks lemma} below).
\item Dagger-Hodge-Tate cohomology is defined as
\begin{equation*}
\overline{\Prism}_{R/A}^{\dagger}:=R\Gamma((R/A)_{\Prism}^{\dagger},\overline{\mathcal{O}}_{\Prism}^{\dagger})\,.
\end{equation*}
This is represented by a complex of $p$-adically weakly complete $\mathcal{O}$-algebras.
\end{enumerate}
\end{defn}

Due to the sheaf property the definition extends to weak formal $\mathcal{O}$-schemes $X$ to get complexes $R\Gamma((X/A)_{\Prism}^{\dagger},\mathcal{O}_{\Prism}^{\dagger})$ and likewise $R\Gamma((X/A)_{\Prism}^{\dagger},\overline{\mathcal{O}}_{\Prism}^{\dagger})$.

Then we have the following main results. 

\begin{thm}(= Theorem \ref{Monsky-Washnitzer}, Corollary \ref{rigid})
\begin{enumerate}[a)]
\item Let $\overline{R}$ be a smooth $k$-algebra with Monsky-Washnitzer lift $R^{\dagger}$. Then we have a canonical isomorphism
\begin{equation*}
\Prism^{\dagger}_{\overline{R}/W(k)}\otimes\mathbb{Q}\simeq\Omega^{\bullet}_{R^{\dagger}/W(k)}\otimes\mathbb{Q}\,.
\end{equation*}
\item Let $X$ be a smooth $k$-scheme. Then we have an isomorphism
\begin{equation*}
R\Gamma((X/A)_{\Prism}^{\dagger},\mathcal{O}_{\Prism}^{\dagger})\otimes\mathbb{Q}\simeq R\Gamma_{\mathrm{rig}}(X/W(k)\otimes\mathbb{Q})\,.
\end{equation*}
\end{enumerate}
\end{thm}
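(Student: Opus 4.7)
\emph{Plan.} My approach to part (a) follows the \v{C}ech--Alexander strategy familiar from \cite{BS22}, adapted to the dagger setting. The Monsky--Washnitzer lift $R^{\dagger}$, equipped with a lift of Frobenius (which exists by Remark iii) above and \cite[Theorem 2.4, Theorem 3.6]{MW68}), is itself a dagger prism over $(W(k),(p))$ and defines an object of $(\overline{R}/W(k))^{\dagger}_{\Prism}$ via the identity $R^{\dagger}/p=\overline{R}$. My first step is to show that $R^{\dagger}$ covers the final object of the associated topos: for any dagger prism $(B,(p))$ over $W(k)$ with a map $\overline{R}\to B/p$, there exists, after a faithfully flat refinement, a map of dagger prisms $R^{\dagger}\to B$ lifting the structure map. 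This is the standard lifting property of very smooth weakly complete algebras, used in the same way that smooth $\delta$-thickenings are used in \cite{BS22}.

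Granting this, $\Prism^{\dagger}_{\overline{R}/W(k)}$ is computed by the \v{C}ech--Alexander complex associated to the cover $R^{\dagger}$, namely
\begin{equation*}
R^{\dagger,(0)} \to R^{\dagger,(1)} \to R^{\dagger,(2)} \to \cdots
\end{equation*}
where $R^{\dagger,(n)}$ is the coproduct of $(n+1)$ copies of $R^{\dagger}$ in the category of dagger prisms over $W(k)$ equipped with a map to $\overline{R}$. The key technical step is the identification of these coproducts: I expect $R^{\dagger,(n)}$ to be the weak completion (in the Monsky--Washnitzer sense) of the divided-power envelope of $R^{\dagger,\otimes_{W(k)}(n+1)}$ along the kernel of the multiplication map onto $\overline{R}$. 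Once this is known, inverting $p$ trivializes the divided-power structure and the \v{C}ech--Alexander complex becomes the standard bar complex, which by classical Monsky--Washnitzer theory is quasi-isomorphic to $\Omega^{\bullet}_{R^{\dagger}/W(k)}\otimes\mathbb{Q}$.

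Part (b) then follows from part (a) by Zariski descent. Both sides of the comparison satisfy Zariski descent on $X$: the left-hand side because $\mathcal{O}^{\dagger}_{\Prism}$ is a sheaf on $(X/A)^{\dagger}_{\Prism}$, and the right-hand side because rigid cohomology of a smooth $k$-scheme is defined by gluing Monsky--Washnitzer de Rham complexes on an affine open cover. Choosing an affine open cover $X=\bigcup\mathrm{Spec}\,\overline{R}_{i}$ with Monsky--Washnitzer lifts $R^{\dagger}_{i}$, applying part (a) on each finite intersection, and comparing \v{C}ech totalizations yields the desired quasi-isomorphism.

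The main obstacle is the explicit computation of the dagger self-coproducts $R^{\dagger,(n)}$ and their relation to weakly complete divided-power envelopes. Rather than constructing these envelopes from scratch in the dagger $\delta$-ring setting, the most promising route is to deduce the result from the $(p)$-adically complete case of \cite{BS22} by showing that the natural map from the dagger self-coproduct to the $(p)$-adic self-coproduct becomes a quasi-isomorphism after $\otimes\mathbb{Q}$ on \v{C}ech totalizations---this is the usual Monsky--Washnitzer ``overconvergent equals convergent rationally'' principle. Controlling the interaction of weak completeness, $\delta$-ring envelopes, and divided powers simultaneously, in particular checking that the bounds defining $\gamma_{\epsilon}$ are preserved when one passes from a polynomial lift to its prismatic envelope, is where most of the subtlety will reside.
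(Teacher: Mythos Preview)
Your proposed reduction to the $p$-adically complete case via an ``overconvergent equals convergent rationally'' principle is not correct and is the main gap. There is no such principle: for an affine smooth $\overline{R}$ (already $\overline{R}=k[T]$), rational crystalline cohomology---equivalently, rational prismatic cohomology over $(W(k),(p))$---is infinite-dimensional, while Monsky--Washnitzer cohomology is finite-dimensional. So the map from the dagger \v{C}ech--Alexander totalization to the $p$-complete one is \emph{not} a rational quasi-isomorphism, and you cannot import the computation from \cite{BS22}. A secondary issue is that invoking ``very smooth'' lifting from \cite{MW68} only produces ring maps $R^{\dagger}\to B$; maps in the dagger prismatic site must be $\delta$-compatible, and you have not explained why the lift can be chosen to intertwine the Frobenius on $R^{\dagger}$ with that on $B$.

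The paper avoids both issues by taking a different route. It uses the \emph{free} $\delta$-algebra construction (as in \cite[4.17]{BS22}) to obtain a weakly initial object, so $\delta$-compatibility is automatic. The key structural observation is that each term $B^{\dagger n}$ of the resulting \v{C}ech--Alexander complex has special fibre a polynomial ring over $\overline{R}$ in infinitely many variables; hence $B^{\dagger n}$ is a direct limit of Monsky--Washnitzer lifts of polynomial $\overline{R}$-algebras, and \cite[Theorem~5.4]{MW68} (homotopy invariance) gives $\Omega^{\bullet}_{B^{\dagger n}/W(k)}\otimes\mathbb{Q}\simeq\Omega^{\bullet}_{R^{\dagger}/W(k)}\otimes\mathbb{Q}$ for every $n$. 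One then runs the Bhatt--de~Jong double complex argument \cite{BdJ11}: in the bicomplex $M^{r,s}=\Omega^{r}_{B^{\dagger s}/W(k)}\otimes\mathbb{Q}$, the columns are all quasi-isomorphic to $\Omega^{\bullet}_{R^{\dagger}/W(k)}\otimes\mathbb{Q}$, the rows $\Omega^{i}_{B^{\dagger\bullet}/W(k)}$ for $i>0$ are cosimplicially nullhomotopic, and comparing the two spectral sequences yields part~(a). For part~(b), rather than gluing the non-canonical isomorphisms of (a) directly, the paper passes through the overconvergent de~Rham--Witt complex $W^{\dagger}\Omega^{\bullet}$ of \cite{DLZ11}, which is a genuine complex of Zariski sheaves and provides the functorial bridge to rigid cohomology.
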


\begin{thm}\label{etale comparison intro}(= Theorem \ref{etale comparison})
Let $(A,d)$ be a perfect prism, $A/d=\mathcal{O}$ and $X/\mathcal{O}$ a smooth weak formal scheme with generic fibre $X_{\eta}=X\times_{\mathrm{Spec}\,\mathcal{O}}\mathrm{Spec}\,\mathcal{O}[1/p]$. Let $\mu:(X_{\eta})_{\mathrm{\acute{e}t}}\rightarrow X_{\mathrm{\acute{e}t}}$. Then we have a canonical isomorphism
\begin{equation*}
R\mu_{\ast}\mathbb{Z}/p^{n}\simeq(\Prism^{\dagger}_{X/A}[1/d]/p^{n})^{\varphi=1}\,.
\end{equation*}
If $X=\mathrm{Spf}^{\dagger}S$ is an affine weak formal scheme with generic fibre $\mathrm{Spec}\,S[1/p]$, then we have
\begin{equation*}
R\Gamma(\mathrm{Spec}\,S[1/p],\mathbb{Z}/p^{n})\simeq(\Prism^{\dagger}_{S/A}[1/d]/p^{n})^{\varphi=1}\,.
\end{equation*}
\end{thm}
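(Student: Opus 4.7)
The plan is to mimic the Bhatt--Scholze proof of the étale comparison (see \cite[Theorem 1.8(5)]{BS22}) in the overconvergent setting, using the dagger perfections of Definition \ref{0.5} in place of the $(p,d)$-completed perfections used there. First I would reduce to the affine assertion: since the presheaves $R\mu_\ast\mathbb{Z}/p^n$ and $(\Prism^{\dagger}_{X/A}[1/d]/p^n)^{\varphi=1}$ are compatible with étale localisation on $X$ (the latter via the sheaf property of $\mathcal{O}^{\dagger}_{\Prism}$ and exactness of taking fixed points), it suffices to prove the stated identification $R\Gamma(\mathrm{Spec}\,S[1/p],\mathbb{Z}/p^n)\simeq(\Prism^{\dagger}_{S/A}[1/d]/p^n)^{\varphi=1}$. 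By étale localisation one may further assume $S$ is small, in the sense that there is an étale map $\mathcal{O}^{\dagger}\langle T_1^{\pm 1},\ldots,T_d^{\pm 1}\rangle\to S$ lifting a framing of $S/p$.

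Next I would establish the key short exact sequence of sheaves on $(X/A)_{\Prism}^{\dagger}$,
\begin{equation*}
0\to\mathbb{Z}/p^n\to\mathcal{O}_{\Prism}^{\dagger}[1/d]/p^n\xrightarrow{\varphi-1}\mathcal{O}_{\Prism}^{\dagger}[1/d]/p^n\to 0\,.
\end{equation*}
Injectivity and exactness in the middle are formal; the crucial statement is surjectivity of $\varphi-1$. It will be proved first on a dagger perfection $B$, by writing a geometrically convergent inverse $-\sum_{k\geq 0}\varphi^k$ and verifying that the resulting series lies in $B[1/d]/p^n$ using the growth condition on $\gamma_\epsilon$ built into the definition of $W^{\dagger}$; this is the overconvergent analogue of \cite[Lemma 9.2]{BS22}. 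Sheafification then covers the general case via the cover by dagger perfections.

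After this, the plan is to compute $R\Gamma((S/A)_{\Prism}^{\dagger},\mathcal{O}_{\Prism}^{\dagger})$ as the totalisation of the Čech nerve of a cover of the final object of the topos by the dagger perfection $S^{\dagger,\mathrm{perf}}$, which exists in $(S/A)_{\Prism}^{\dagger,\mathrm{perf}}$ by the construction surrounding Definition \ref{0.5}. Under the main example's isomorphism $A^{\dagger}\langle U^{\pm 1/p^{\infty}}\rangle\xrightarrow{\sim}W^{\dagger}(\mathcal{O}^{\dagger}\langle T^{\pm 1/p^{\infty}}\rangle^{\flat})$, the terms of this cosimplicial object are overconvergent Witt rings of perfectoid dagger $\mathcal{O}^{\flat}$-algebras, whose reductions modulo $d$, inverted at $p$, form a pro-étale cover of $\mathrm{Spec}\,S[1/p]$. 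Applying $(-)[1/d]/p^n$ to the Čech complex and taking $\varphi=1$ via the short exact sequence above would then identify the outcome term-by-term with the Čech complex computing $R\Gamma(\mathrm{Spec}\,S[1/p],\mathbb{Z}/p^n)$ against this pro-étale cover.

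The main obstacle is that last identification, which requires the overconvergent analogue of Faltings' almost purity: one must know that, after reducing modulo $(p,d)$ and inverting $p$, the Čech nerve of the dagger perfect cover yields a pro-étale cover of the generic fibre, and that the overconvergence condition on Witt vectors passes through the relevant completed tensor products. This is precisely the ``almost purity property for overconvergent Witt vectors on perfectoid dagger algebras'' advertised in the introduction; once it is in hand, the affine statement follows, and sheafification via \cite[Lecture V]{Bha18a} upgrades it to the global $R\mu_\ast\mathbb{Z}/p^n\simeq(\Prism^{\dagger}_{X/A}[1/d]/p^n)^{\varphi=1}$.
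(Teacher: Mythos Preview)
Your approach diverges from the paper's in two essential ways, and one step contains a genuine gap.

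\textbf{The gap.} The surjectivity of $\varphi-1$ on $B[1/d]/p^n$ for a dagger perfection $B$ cannot be proved by the series $-\sum_{k\ge 0}\varphi^k$. After reducing modulo $p$ and inverting $d$ you are in a perfect $\mathbb{F}_p$-algebra with $d$ a unit; Frobenius is bijective but not topologically nilpotent, and for $y=d^{-m}z$ one has $\varphi^k(y)=d^{-mp^k}\varphi^k(z)$, so the series diverges badly. Surjectivity of $\varphi-1$ here is an Artin--Schreier--Witt statement: it holds \'etale-locally on $\mathrm{Spec}\,S^{\flat}[1/d]$, not via a global formula. The paper handles this point at the very end of the argument (for $S$ dagger perfectoid) by invoking Artin--Schreier--Witt together with the dagger analogue of \cite[Theorem 4.9, Lemma 4.10]{Sch13}, not by a short exact sequence of sheaves on $(X/A)_{\Prism}^{\dagger}$.

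\textbf{The different route.} The paper does not use the \v{C}ech nerve of a perfectoid cover of the final object, nor the almost purity result (that belongs to \S5 and the $A^{\dagger}\Omega$ comparison, not to the \'etale comparison). Instead it computes $\Prism^{\dagger}_{S/A}$ by the \v{C}ech--Alexander complex $C^{\bullet}$, writes each term as $\varinjlim_{\epsilon}C^{\bullet}_{\epsilon}$ with $C^{\bullet}_{\epsilon}$ $(p,d)$-complete, and uses \cite[Lemma 9.2]{BS22} to see that $(-[1/d]/p)^{\varphi=1}$ commutes with this colimit and with passage to the dagger perfection $\Prism^{\dagger}_{S/A,\mathrm{perf}}$. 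Both sides are then shown to be $\mathrm{arc}_p$-sheaves on weak formal schemes (defined via the $\epsilon$-presentation), so one may $\mathrm{arc}$-localise to the case $S$ dagger perfectoid. There one has $\Prism^{\dagger}_{S/A}\cong W^{\dagger}(S^{\flat})$, and the comparison reduces to the tilting equivalence $R\Gamma(\mathrm{Spec}\,S[1/p],\mathbb{Z}/p^n)\cong R\Gamma(\mathrm{Spec}\,S^{\flat}[1/d],\mathbb{Z}/p^n)$ plus Artin--Schreier--Witt on $W_n(S^{\flat})[1/d]$. Your pro-\'etale/almost-purity endgame would have to reproduce the $\mathrm{arc}$-descent step by other means, and it is not clear that the dagger perfection $S^{\dagger,\mathrm{perf}}$ covers the final object of the dagger prismatic topos; the paper sidesteps this entirely.
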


\begin{thm}\label{q-de Rham intro}(= Theorem \ref{q-de Rham})
Let $A=A_{\mathrm{inf}}(\mathcal{O})$, $X/\mathcal{O}$ a smooth weak formal scheme. Then we have an isomorphism
\begin{equation*}
\varphi^{\ast}R\Gamma((X/A)_{\Prism}^{\dagger},\mathcal{O}_{\Prism}^{\dagger})\otimes^{L}A/[p]_{q}A\cong\Omega^{\dagger\bullet}_{X/\mathcal{O}}\,.
\end{equation*}
\end{thm}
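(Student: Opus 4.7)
The plan is to reduce to the affine framed case and then adapt the Bhatt--Scholze proof of the (non-overconvergent) $q$-de Rham comparison in \cite{BS22}, replacing the perfectoid cover by a dagger-perfectoid cover. First I would use the sheaf property of $\mathcal{O}_\Prism^\dagger$ to reduce to $X = \mathrm{Spf}^\dagger R$ where $R$ admits an \'etale framing $\square: \mathcal{O}\langle T_1^{\pm 1}, \ldots, T_d^{\pm 1}\rangle^\dagger \to R$. Then I would form the dagger-perfectoid cover $R^\dagger_\infty$ obtained by weakly completing $R \otimes_{\mathcal{O}\langle \underline{T}^{\pm 1}\rangle^\dagger} \mathcal{O}\langle T_1^{\pm 1/p^\infty}, \ldots, T_d^{\pm 1/p^\infty}\rangle^\dagger$, and apply the dagger perfection construction $W^\dagger$ of Definition \ref{0.5} (in combination with the Witt-vector isomorphism recalled in the example just above) to obtain a dagger prism $A_\infty^\dagger \in (R/A)_\Prism^{\dagger,\mathrm{perf}}$ covering the initial object. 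A natural continuous $\Gamma = \mathbb{Z}_p^d$-action on $A_\infty^\dagger$, acting on $T_i^{1/p^n}$ by multiplication by $[\epsilon^{1/p^n}]$, turns $A_\infty^\dagger$ into a $\Gamma$-Galois cover.

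Next I would invoke the almost-purity property for overconvergent Witt vectors on dagger-perfectoid algebras, developed earlier in the paper for the $A_\mathrm{inf}$-comparison, to show that the \v{C}ech nerve of $A_\infty^\dagger$ computes $R\Gamma((R/A)_\Prism^\dagger, \mathcal{O}_\Prism^\dagger)$ and identifies it with the continuous group cohomology $R\Gamma_\mathrm{cont}(\Gamma, A_\infty^\dagger)$. The standard Koszul resolution of $\mathbb{Z}_p^d$ then represents the latter as a Koszul complex $K(A_\infty^\dagger; \gamma_1 - 1, \ldots, \gamma_d - 1)$, serving as an overconvergent $q$-de Rham complex $q\Omega^{\dagger\bullet}_{R/A}$.

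Finally I would apply the Frobenius twist and tensor with $A/[p]_q A$. Since $\varphi(q) = q^p$ and $q^p \equiv 1 \pmod{[p]_q}$, the $\varphi$-twisted Koszul operators specialise to the usual derivations $T_i \partial/\partial T_i$ via the Bhatt--Scholze normalisation $\nabla_{q,i} = (\gamma_i - 1)/(q - 1)$. Taking $\Gamma$-invariants of the resulting Koszul complex produces the overconvergent de Rham complex of $\mathcal{O}\langle \underline{T}^{\pm 1}\rangle^\dagger$ with coefficients in the base, which descends along the \'etale framing to $\Omega^{\dagger\bullet}_{X/\mathcal{O}}$.

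The hard part will be the second step: one must check that the dagger-perfectoid cover is genuinely a cover in the dagger prismatic topology (so that faithful flatness and the almost-purity input survive the overconvergence condition), and that the growth condition defining $W^\dagger$ is compatible with both the $\Gamma$-action and the Frobenius twist so that the Koszul complex remains term-by-term overconvergent. Once this is settled the $q = 1$ specialisation is essentially formal, but one still needs to verify that the limit of overconvergent Koszul cohomologies yields the weakly complete complex $\Omega^{\dagger\bullet}_{X/\mathcal{O}}$ rather than its $p$-adic completion, which should follow directly from the growth condition $\gamma_\epsilon$ built into the definition of a dagger prism.
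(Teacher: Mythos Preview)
Your proposal takes a genuinely different route from the paper, and it contains a real gap.

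The paper does \emph{not} compute $\varphi^{\ast}\Prism_{R/A}^{\dagger}$ via a perfectoid cover. Instead it uses the \v{C}ech--Alexander complex $C_{\bullet}$ built from free $\delta$-algebras (the weakly initial object in $(R/A)_{\Prism}^{\dagger}$), then observes that $\varphi^{\ast}(C_{n})=P_{n,\delta}^{\dagger}\{\varphi(J_{n})/[p]_{q}\}$ is a $(p,[p]_{q})$-weakly completed $[p]_{q}$-PD-polynomial algebra over $\varphi^{\ast}(C_{0})$. A $q$-PD Poincar\'e lemma (the dagger analogue of Berthelot's Lemma V.2.1.2) then gives $q\Omega^{\dagger\bullet}_{\varphi^{\ast}(C_{n})/A}\simeq q\Omega^{\dagger\bullet}_{\varphi^{\ast}(C_{0})/A}$ for all $n$, and the Bhatt--de Jong double-complex argument identifies $\varphi^{\ast}\Prism_{R/A}^{\dagger}$ with $q\Omega^{\dagger\bullet}_{\varphi^{\ast}(C_{0})/A}$. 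Reducing this modulo $[p]_{q}$ immediately yields $\Omega^{\dagger\bullet}_{R/\mathcal{O}}$, since the $q$-derivation specialises to the ordinary derivation. No almost mathematics, no d\'ecalage, and no perfectoid cover enters.

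Your approach, by contrast, mixes up two distinct computations carried out in the paper. The almost-purity statement (Proposition~\ref{almost zero}) is proved and used only for pro-\'etale cohomology of $W^{\dagger}(\mathcal{O}_{X}^{+\flat})$, in order to compute $A^{\dagger}\Omega$; it says nothing about the \v{C}ech nerve of $A_{\infty}^{\dagger}$ in the prismatic site computing $\Prism_{R/A}^{\dagger}$. Making that identification would require a dagger version of quasisyntomic descent for prismatic cohomology, which the paper does not develop. More seriously, even granting such descent, the Koszul complex $K(A_{\infty}^{\dagger};\gamma_{1}-1,\ldots,\gamma_{d}-1)$ is \emph{not} the $q$-de Rham complex: in the paper's notation one has the $\Gamma$-equivariant splitting $A_{\inf}^{\dagger}(R_{\infty})=A(R)^{\square}\oplus A_{\inf}^{\dagger}(R_{\infty})^{\mathrm{nonint}}$, and only after applying $L\eta_{\mu}$ does the non-integral summand die and the differentials become divisible by $\mu=[\epsilon]-1$, producing $q\Omega^{\dagger\bullet}_{A(R)^{\square}/A_{\inf}}$. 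Your proposal omits $L\eta_{\mu}$ entirely. Indeed, the link between the perfectoid computation and prismatic cohomology in the paper is Theorem~\ref{main theorem}, and there the comparison is only achieved after $\otimes^{\dagger}_{A_{\inf}}B_{\mathrm{cris}}^{+}$, precisely because matching the two $q$-de Rham complexes requires \cite[Corollary 12.4]{BMS18}.
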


\begin{rem}
Thus theorems \ref{etale comparison intro} and \ref{q-de Rham intro} are the dagger analogues of \cite[Theorem 9.1]{BS22} and \cite[Theorem 1.8(3)]{BS22}.
\end{rem}

Finally we define an overconvergent version $A^{\dagger}\Omega_{\mathcal{X}/\mathcal{O}}$ of the complex $A\Omega_{\mathcal{X}/\mathcal{O}}$ of Bhatt-Morrow-Scholze, for a smooth weak formal $\mathcal{O}$-scheme $\mathcal{X}$ (for the definition see \S5) and prove the following comparison result.

\begin{thm}(= Theorem \ref{main theorem})
Let $\mathcal{X}$ be a smooth weak formal scheme over $\mathrm{Spf}\,\mathcal{O}$. Then we have a $\varphi$-equivariant quasi-isomorphism
\begin{equation*}
R\Gamma(\mathcal{X},A^{\dagger}\Omega_{\mathcal{X}/\mathcal{O}})\otimes^{L\dagger}_{A_{\inf}}B_{\mathrm{cris}}^{+}\cong\varphi^{\ast}\left(R\Gamma((\mathcal{X}/A_{\inf})_{\Prism},\mathcal{O}_{\Prism}^{\dagger})\right)\otimes^{L\dagger}_{A_{\inf}}B_{\mathrm{cris}}^{+}\,.
\end{equation*}
\end{thm}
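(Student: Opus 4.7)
The plan is to deduce the statement from the known integral comparison of Bhatt--Morrow--Scholze \cite{BMS18} (equivalently \cite{BS22}), exploiting that the base change $\otimes^{L\dagger}_{A_{\inf}}B_{\mathrm{cris}}^{+}$ should erase the distinction between the weakly complete dagger theories and their $(p,d)$-adic completions. First I would reduce to the affine smooth case $\mathcal{X}=\mathrm{Spf}^{\dagger}R$ with $R$ admitting an \'etale chart over $\mathcal{O}\langle T_{1}^{\pm 1},\ldots,T_{d}^{\pm 1}\rangle^{\dagger}$. Both sides are defined by presheaves on a site which sheafify compatibly, so a standard \v Cech argument (analogous to \cite[Lecture V]{Bha18a}) reduces the global comparison to this local statement, and $\otimes^{L\dagger}_{A_{\inf}}B_{\mathrm{cris}}^{+}$ commutes with the \v Cech formalism.

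In the affine case, by the construction of dagger perfections in Example 0.7 together with the affine version of Theorem \ref{q-de Rham intro}, $\varphi^{\ast}\Prism^{\dagger}_{R/A_{\inf}}$ is represented by a $q$-de Rham / Koszul complex built from the dagger perfection $R^{\dagger,\mathrm{perf}}$ attached to the pro-\'etale tower $R_{\infty}^{\dagger}$ obtained by adjoining $p$-power roots of the coordinates $T_{i}$. Similarly, the complex $R\Gamma(\mathcal{X},A^{\dagger}\Omega_{\mathcal{X}/\mathcal{O}})$ is computed, via the almost purity property for overconvergent Witt vectors on perfectoid dagger algebras mentioned in the introduction, by continuous group cohomology of $W^{\dagger}((R_{\infty}^{\dagger})^{\flat})$ for the natural $\mathbb{Z}_{p}(1)^{d}$-action. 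The relative version of the isomorphism $A^{\dagger}\langle\underline{U}^{\pm 1/p^{\infty}}\rangle\xrightarrow{\sim}W^{\dagger}(\mathcal{O}^{\dagger}\langle\underline{T}^{\pm 1/p^{\infty}}\rangle^{\flat})$ of Example 0.7 identifies the dagger perfection feeding the prismatic complex with the overconvergent Witt ring feeding $A^{\dagger}\Omega$.

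After this identification, both complexes live over the same overconvergent ring, and the remaining task -- matching the $q$-de Rham differentials with the group-cohomology differentials after the $L\eta_{\mu}$-operation with $\mu=[\epsilon]-1$ -- is the exact dagger analogue of the calculation of \cite[\S 12]{BMS18}. After $\otimes^{L\dagger}_{A_{\inf}}B_{\mathrm{cris}}^{+}$ the element $\mu$ becomes a unit (once $\varphi$ is applied, which is precisely the Frobenius twist on the right-hand side), so the $L\eta_{\mu}$-discrepancy disappears and the two complexes become quasi-isomorphic. The $\varphi$-equivariance is inherited from the fact that every identification used is Frobenius-equivariant.

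The hard part will be ensuring that the base change $\otimes^{L\dagger}_{A_{\inf}}B_{\mathrm{cris}}^{+}$ is well-behaved enough for the integral argument of \cite{BMS18} to transport cleanly to the dagger setting; concretely, one must show that both $\varphi^{\ast}\Prism^{\dagger}_{R/A_{\inf}}\otimes^{L\dagger}_{A_{\inf}}B_{\mathrm{cris}}^{+}$ and $R\Gamma(\mathcal{X},A^{\dagger}\Omega_{\mathcal{X}/\mathcal{O}})\otimes^{L\dagger}_{A_{\inf}}B_{\mathrm{cris}}^{+}$ agree with the analogous base change of the known completed complexes. This requires a careful treatment of the weak completion condition $\gamma_{\epsilon}(s)>C$ of Definition 0.1 throughout the \v Cech construction, and is the principal technical content of the proof; once it is in place, the remaining steps are formal manipulations with \v Cech complexes and Frobenius twists.
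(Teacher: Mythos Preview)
Your outline has the right overall shape---reduce to small affine, compute $A^{\dagger}\Omega$ by continuous $\Gamma=\mathbb{Z}_{p}(1)^{d}$-cohomology, and use a $q$-de Rham complex as a bridge---but several of the key mechanisms are misidentified, and the proposed ``hard part'' is not the one that actually arises.

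First, the claim that $\mu=[\epsilon]-1$ (or $\varphi(\mu)$) becomes a unit in $B_{\mathrm{cris}}^{+}$ is false: $\mu$ and $t=\log[\epsilon]$ differ by a unit in $B_{\mathrm{dR}}^{+}$, and $t$ is not invertible in $B_{\mathrm{cris}}^{+}$. The r\^ole of $\otimes^{\dagger}_{A_{\inf}}B_{\mathrm{cris}}^{+}$ in the paper is entirely different: by \cite[Corollary~12.4]{BMS18} it converts the $q$-de Rham complex into the ordinary de Rham complex, which is what lets one compare the $q$-de Rham complex over the $q$-PD envelope $D^{\dagger}_{J_{0},q}(P_{0,\delta})$ coming from the \v Cech--Alexander side with the $q$-de Rham complex over the finite-type lift $A(R)^{\square}$ coming from the $A^{\dagger}\Omega$ side. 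The $L\eta_{\mu}$ is handled on the $A^{\dagger}\Omega$ side \emph{before} any base change, via the Koszul description \cite[Lemma~7.9]{BMS18} together with the decomposition into integral and non-integral parts.

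Second, $\varphi^{\ast}\Prism^{\dagger}_{R/A_{\inf}}$ is not ``a Koszul complex built from the dagger perfection $R^{\dagger,\mathrm{perf}}$''. The paper computes it by the \v Cech--Alexander complex $C_{\bullet}$ and then proves a dagger $q$-PD Poincar\'e lemma showing that each $\varphi^{\ast}(C_{n})$ has the same $q$-de Rham cohomology as $\varphi^{\ast}(C_{0})$; a double-complex argument (as in \cite{BdJ11}) then contracts the cosimplicial direction. The perfectoid tower $R_{\infty}$ enters only on the $A^{\dagger}\Omega$ side.

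Third, and most importantly, your proposed strategy of deducing everything from the completed comparison by showing that $\otimes^{L\dagger}_{A_{\inf}}B_{\mathrm{cris}}^{+}$ erases the dagger/complete distinction is not what the paper does and is unlikely to succeed: already over the crystalline prism $(W(k),(p))$, dagger prismatic cohomology gives rigid cohomology while prismatic cohomology gives crystalline cohomology, and these differ for non-proper $X$ even after inverting $p$. The paper instead works intrinsically in the dagger world. The genuine ``hard part'' is Proposition~\ref{almost zero}: an almost purity theorem for overconvergent Witt vectors on perfectoid dagger algebras, which occupies most of the proof and is needed to pass from $R\Gamma_{\mathrm{pro\acute{e}t}}(U,W^{\dagger}(\mathcal{O}_{X}^{+\flat}))$ to $R\Gamma_{\mathrm{gp}}(\Gamma,W^{\dagger}(R_{\infty}^{\flat}))$; one then needs a further argument (Lemma~\ref{quasi-isom}, via \cite[Lemma~6.14]{Bha18b}) that $L\eta_{\mu}$ upgrades this almost quasi-isomorphism to a genuine one.
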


In the last section we define an overconvergent version $W^{\dagger}\Omega^{\bullet}_{\mathcal{X}/\mathcal{O}}$ of the relative de Rham-Witt complex $W\Omega^{\bullet}_{\mathcal{X}/\mathcal{O}}$, generalising the construction in \cite{DLZ11}, and compare it with $A\Omega^{\dagger}_{\mathcal{X}/\mathcal{O}}$.

\begin{thm}(=Theorem \ref{overconvergent comparison})
Let $\mathcal{X}$ be a weak formal smooth $\mathcal{O}$-scheme. Then
\begin{equation*}
A\Omega^{\dagger}_{\mathcal{X}/\mathcal{O}}\otimes^{L}_{A_{\inf}}W(\mathcal{O})\cong W^{\dagger}\Omega^{\bullet}_{\mathcal{X}/\mathcal{O}}\,.
\end{equation*}
\end{thm}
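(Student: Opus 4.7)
The plan is to adapt to the dagger setting the specialisation-at-$q=1$ proof of the analogous non-overconvergent identity $A\Omega_{\mathcal{X}/\mathcal{O}}\otimes^L_{A_{\inf}} W(\mathcal{O})\cong W\Omega^\bullet_{\mathcal{X}/\mathcal{O}}$ from \cite{BMS18}, reducing to an affine local computation and then sheafifying. Both sides of the claimed isomorphism satisfy Zariski descent — the left hand side from the sheaf property built into $A^\dagger\Omega$ in \S5, and the right hand side from the construction of $W^\dagger\Omega^\bullet$ in the last section — so we may assume $\mathcal{X}=\mathrm{Spf}^\dagger R$ is affine and admits a framing $\mathcal{O}^\dagger\langle T_1^{\pm 1},\ldots,T_d^{\pm 1}\rangle\to R$ that is étale modulo $p$.

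In such a chart, $\varphi^\ast A^\dagger\Omega_{R/\mathcal{O}}$ is computed, by Theorem \ref{q-de Rham intro} together with the construction of $A^\dagger\Omega$ in \S5, by an overconvergent $q$-de Rham complex on the dagger-perfectoid cover obtained from the framing. Under the canonical isomorphism
\begin{equation*}
A^{\dagger}\langle U_1^{\pm 1/p^\infty},\ldots,U_d^{\pm 1/p^\infty}\rangle\;\xrightarrow{\sim}\;W^{\dagger}(\mathcal{O}^{\dagger}\langle T_1^{\pm 1/p^\infty},\ldots,T_d^{\pm 1/p^\infty}\rangle^{\flat})
\end{equation*}
recalled in the main example above, applying $-\otimes^L_{A_{\inf}} W(\mathcal{O})$ sends $[\epsilon]\mapsto 1$ and therefore sends the $q$-connection to the standard Witt-vector differential. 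After taking fixed points under $\Gamma=\mathbb{Z}_p(1)^d$, one recovers by direct inspection the overconvergent relative de Rham-Witt complex $W^\dagger\Omega^\bullet_{R/\mathcal{O}}$ of the last section.

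The local isomorphisms so obtained are functorial enough in the framing to descend, via Zariski gluing, to a canonical $\varphi$-equivariant isomorphism of complexes of Zariski sheaves on $\mathcal{X}$. Frobenius equivariance is automatic, since both complexes are formed by base change of $\varphi$-equivariant objects along the $\varphi$-equivariant map $A_{\inf}\to W(\mathcal{O})$.

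The main obstacle is the matching of overconvergence structures across this specialisation. On the $A^\dagger\Omega$ side, overconvergence is encoded by the growth condition $\gamma_\epsilon$ of Definition \ref{0.5} applied to the dagger perfection coordinates, while on the $W^\dagger\Omega$ side it is the growth condition on Witt vectors used in the last section, generalising \cite{DLZ11}. One must verify that these two conditions are intertwined by the displayed isomorphism, and then — to pass from the framing chart to $R$ itself — one needs to invoke the almost purity property for overconvergent Witt vectors on perfectoid dagger algebras announced in the introduction. This bookkeeping of dagger norms through Witt coordinates together with the étale descent along the framing is the technical heart of the argument.
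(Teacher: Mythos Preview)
Your overall shape---localize to a framed affine, identify $A^\dagger\Omega$ with a $q$-de Rham complex, specialize along $A_{\inf}\to W(\mathcal{O})$, then globalize---matches the paper's strategy, but several of your intermediate steps are misdirected and the paper's key lemma is absent from your argument.

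First, you invoke $\varphi^\ast A^\dagger\Omega$ and Theorem~\ref{q-de Rham intro}; that theorem concerns $\varphi^\ast\Prism^\dagger$ modulo $[p]_q$, not $A^\dagger\Omega$ base-changed to $W(\mathcal{O})$. The relevant input is rather the direct identification $A^\dagger\Omega_{R/\mathcal{O}}\simeq q\Omega^{\dagger\bullet}_{A(R)^\square/A_{\inf}}$ (no Frobenius twist) from \S5, obtained via Corollary~\ref{almost quasi-isom} and Lemma~\ref{quasi-isom}. Second, after base change along $\theta_\infty$ there is no further ``taking fixed points under $\Gamma$'': the $q$-de Rham complex already lives over the lift $A(R)^\square$ of $R$, not over $R_\infty$, and since $\ker\theta_\infty=(\mu)$ it specialises directly to the ordinary de Rham complex $\Omega^{\dagger\bullet}_{\tilde R/W(\mathcal{O})}$ of the lift $\tilde R=A(R)^\square\otimes_{A_{\inf}}W(\mathcal{O})$. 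Almost purity plays no role at this stage.

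What is genuinely missing is the bridge from $\Omega^{\dagger\bullet}_{\tilde R/W(\mathcal{O})}$ to $W^\dagger\Omega^\bullet_{R/\mathcal{O}}$. The paper supplies this via a structural decomposition of the relative de Rham--Witt complex into an integral part, identified with the de Rham complex of the $W(\mathcal{O})$-lift, and an acyclic fractional part: first for polynomial algebras (Lemma~\ref{lemma poly}, using the basic Witt differentials of \cite{LZ04}) and then for $R$ by \'etale base change of the finite-level de Rham--Witt complex. The matching of overconvergence conditions is handled not through the perfectoid isomorphism you cite, but through the observation that $\theta_\infty$ carries the $(p,d)$-adic topology to the $p$-$V$-adic topology, together with the explicit growth conditions imposed on basic Witt differentials. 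Without this integral/fractional decomposition your argument does not actually produce a map to $W^\dagger\Omega^\bullet_{R/\mathcal{O}}$, let alone a quasi-isomorphism.
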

\section{Perfect dagger prisms, perfectoid dagger algebras and the structure sheaf of the overconvergent prismatic site}

In this section we prove some properties of dagger prisms and in particular give a dagger version of an equivalence of categories due to Bhatt-Scholze between perfect prisms and perfectoid algebras. Finally we prove the sheaf property of the structure (pre-)sheaves $\mathcal{O}_{\Prism}^{\dagger}$ and $\overline{\mathcal{O}}_{\Prism}^{\dagger}$.

\begin{lemma}\label{Lemma}
Let $S$ be a dagger prism of finite type with generators $x_{1},\ldots, x_{r}$. Then $S=\varinjlim_{\epsilon}S_{\epsilon}$ where $S_{\epsilon}$ is a $(p,d)$-adically complete subring of $S$.
\end{lemma}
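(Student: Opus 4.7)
The natural candidate for $S_\epsilon$ is to let it consist of those $s \in \widehat{S}$ whose coefficient expansion $s = \sum_\kappa a_\kappa \underline{x}^\kappa$ from axiom iii) of the definition of a dagger prism satisfies $\gamma_\epsilon(s) > -\infty$; by the second bullet of that axiom, this is the same as asking that $s$ lies in $S$ with ``convergence rate'' at least $\epsilon$. It is harmless to restrict to $\epsilon = 1/n$ for positive integers $n$, whereupon tautologically $S = \bigcup_{n\geq 1} S_{1/n} = \varinjlim_n S_{1/n}$. That each $S_{1/n}$ is closed under addition and multiplication is immediate from the estimates $\gamma_\epsilon(s+t) \geq \min(\gamma_\epsilon(s), \gamma_\epsilon(t))$ and $\gamma_\epsilon(st) \geq \gamma_\epsilon(s) + \gamma_\epsilon(t)$, which one reads off from the definition of $v_J$ and the coefficient expansion of a product.

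The real content of the lemma is the $(p,d)$-adic completeness of each $S_{1/n}$. Inspired by the classical Monsky--Washnitzer description, the strategy I would follow is to realize $S_{1/n}$ as the $(p,d)$-adic completion of an explicit subring of $\widehat{S}[1/p, 1/d]$. Let $T_n$ be the $\widehat{A}$-subalgebra of $\widehat{S}[1/p,1/d]$ generated by the $x_i$ together with formal quotients $y_{i,p} := x_i^n/p$ and $y_{i,d} := x_i^n/d$, subject to $p\,y_{i,p} = x_i^n$ and $d\,y_{i,d} = x_i^n$. Rewriting any monomial of $T_n$ as $p^{-\alpha} d^{-\beta} \underline{x}^\mu$ with $\alpha + \beta \leq |\mu|/n$, one checks that the $(p,d)$-adic completion of $T_n$ embeds into $\widehat{S}$ with image exactly the set of series $\sum a_\kappa \underline{x}^\kappa$ whose coefficients satisfy $v_J(a_\kappa) \geq |\kappa|/n - O(1)$. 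This identifies $T_n^{(p,d)\wedge} = S_{1/n}$ and provides the required completeness.

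The delicate step is the inclusion $S_{1/n} \subseteq T_n^{(p,d)\wedge}$. Given $s = \sum a_\kappa \underline{x}^\kappa \in S_{1/n}$, one must rewrite each term as a $(p,d)$-adically convergent sum of elements of $T_n$, by decomposing $a_\kappa \in J^{m}$, where $m \geq |\kappa|/n - C$, as $\sum_{i+j=m} p^i d^j b_{ij}$ and absorbing the factors $p^i d^j$ into powers of the generators $y_{i,p}, y_{i,d}$. Because $J = (p,d)$ is not principal, one has to choose a simultaneous split of the exponent $\kappa$ between the two types of divisions; this combinatorial bookkeeping is the heart of the argument and does not reduce directly to the classical principal-ideal case. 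Once it is carried out, $(p,d)$-adic completeness of $S_{1/n}$, and hence the presentation $S = \varinjlim_n S_{1/n}$, follow at once from the defining property iii).
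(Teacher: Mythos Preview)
Your construction of $T_n$ has the growth direction reversed, and this is a genuine gap rather than missing bookkeeping. Adjoining $x_i^n/p$ and $x_i^n/d$ to $\widehat{A}[x_1,\dots,x_r]$ produces elements whose coefficient of $\underline{x}^\kappa$ is allowed to lie in $J^{-\lfloor|\kappa|/n\rfloor}$, so the $(p,d)$-completion of $T_n$ sits inside $\widehat{S}[1/p,1/d]$ but not inside $\widehat{S}$ (already $x_1^n/p\notin\widehat{S}$ unless $x_1^n$ happens to be divisible by $p$). In classical terms you have written down the ring of functions on the \emph{smaller} polydisk of radius $p^{-1/n}$, whereas the dagger condition $\gamma_{1/n}(s)>-\infty$ singles out functions extending to the \emph{larger} polydisk of radius $p^{1/n}$. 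The claimed identification $T_n^{(p,d)\wedge}=S_{1/n}$ therefore cannot hold, and in fact there is no finite-type $A$-subalgebra of $\widehat S$ playing this role without adjoining something like $p^{1/n}$.

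The paper's route is both simpler and sidesteps this trap: instead of working inside $S$, it uses the surjection $\lambda:A^{\dagger}\langle T_1,\dots,T_r\rangle\twoheadrightarrow S$, $T_i\mapsto x_i$, furnished by axiom~iii), and defines everything at the free level. There one sets
\[
A_\epsilon\langle\underline{T}\rangle\;=\;\Bigl\{\sum_I a_I\underline{T}^I\in A\langle\underline{T}\rangle:\ \lim_{|I|\to\infty}\bigl(v_J(a_I)-\epsilon|I|\bigr)=\infty\Bigr\},
\]
the usual ring of series converging on the closed polydisk of radius $p^\epsilon$, so that $A^\dagger\langle\underline{T}\rangle=\varinjlim_\epsilon A_\epsilon\langle\underline{T}\rangle$, and then simply takes $S_\epsilon:=\lambda(A_\epsilon\langle\underline{T}\rangle)$. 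Note also that the paper uses the convergence condition $\lim=\infty$ rather than your boundedness condition $\gamma_\epsilon>-\infty$; the two families are cofinal as $\epsilon\to 0$, but your version is not stable under $(p,d)$-adic limits (for $A=\mathbb{Z}_p$, one variable, $\epsilon=1$, the partial sums of $\sum_{k\ge1}p^kT^{2^k}$ each have $\gamma_1>-\infty$ and form a $p$-adically Cauchy sequence whose limit has $\gamma_1=-\infty$).
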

\begin{proof}
Define $A^{\dagger}\langle T_{1},\ldots, T_{r}\rangle$ to be the weak completion of $A[T_{1},\ldots, T_{r}]$ with respect to the $(p,d)$-adic topology. Then 
\begin{equation*}
\lambda\,:\,A^{\dagger}\langle T_{1},\ldots, T_{r}\rangle\rightarrow S\,;\,T_{i}\mapsto x_{i}
\end{equation*}
is surjective. Let 
\begin{equation*}
A_{\epsilon}\langle T_{1},\ldots, T_{r}\rangle=\{f\in A\langle T_{1},\ldots, T_{r}\rangle\, |\, f\text{ has radius of convergence }p^{\epsilon}, \epsilon>0,\text{ in }A[1/p]\langle T_{1},\ldots, T_{r}\rangle\}\,.
\end{equation*}
This is a $(p,d)$-adically complete subring of $A\langle T_{1},\ldots, T_{r}\rangle$ and we have $f\in A_{\epsilon}\langle T_{1},\ldots, T_{r}\rangle$ for $f=\sum_{I}a_{I}\underline{T}^{I}$, $_{I}\in A$ if and only if $\varinjlim_{I}(v_{J}(a_{I})-\epsilon|I|)=\infty$. 
$A_{\epsilon}\langle T_{1},\ldots, T_{r}\rangle=\{f\in A\langle T_{1},\ldots, T_{r}\rangle$ is equipped with the Gauss norm $\gamma_{\epsilon}(f)=\min_{I}\{v_{J}(a_{I})-\epsilon|I|\}>-\infty$ and $\,A^{\dagger}\langle T_{1},\ldots, T_{r}\rangle=\varinjlim_{\epsilon}\,A_{\epsilon}\langle T_{1},\ldots, T_{r}\rangle$. Then $S_{\epsilon}=\lambda(A_{\epsilon}\langle T_{1},\ldots, T_{r}\rangle)$ is a $(p,d)$-adically complete subring of $S$ and $S=\varinjlim_{\epsilon}S_{\epsilon}$.
\end{proof}

\begin{rem*}
Note that $S_{\epsilon}$ is not a prism because it does not admit a lifting of Frobenius (in general).
\end{rem*}

Note that the perfection of dagger prisms are already weakly complete with respect to the $(p,d)$-adic topology. Indeed, let $S=\varinjlim_{\epsilon}S_{\epsilon}$ as in Lemma \ref{Lemma}. Let $\varphi(x_{i})=x_{i}^{p}+p\delta(x_{i})$ for $i=1,\ldots, r$. Choose $\epsilon$ such that $\gamma_{\epsilon}(p\delta(x_{i}))>0$ for all $i$. This is possible since $\gamma_{\epsilon}(p)=1$. Then for any $s\in S_{\epsilon}$ we have $\varphi(s)\in S_{\epsilon/p}$ and hence
\begin{equation*}
\varinjlim_{\varphi}S=\varinjlim_{\epsilon}\varinjlim(S_{\epsilon}\xrightarrow{\varphi}S_{\epsilon/p}\xrightarrow{\varphi}S_{\epsilon/p^{2}}\xrightarrow{\varphi}\cdots)
\end{equation*}
is weakly complete.

\begin{defn}
An $\mathcal{O}$-algebra $R$ is a perfectoid dagger algebra if it is weakly complete with respect to the $\pi$-adic topology for some element $\pi\in R$ with $p\in\pi^{p}R$, the Frobenius $\varphi:R/p\rightarrow R/p$ is surjective and the $\pi$-adic completion $\widehat{R}$ is perfectoid. Moreover, we require that $\widehat{R}$ is equipped with a family of Gauss norms $\gamma_{\epsilon}$ such that $R_{\epsilon}:=\{r\in\widehat{R}\,:\,\gamma_{\epsilon}(r)\text{ is finite}\}$ is a perfectoid $\mathcal{O}$-algebra and such that $R=\varinjlim_{\epsilon}R_{\epsilon}$.
\end{defn}

Then we have a dagger version of \cite[Theorem 3.10]{BS22}:

\begin{prop}\label{equivalence of cats}
There is an equivalence of categories between dagger perfect prisms $B$ and perfectoid dagger $\mathcal{O}$-algebras $R$. The two functors are given as
\begin{equation*}
G\,:\,B \mapsto B/d 
\end{equation*}
and 
\begin{equation*}
H\,:\,R \mapsto A_{\mathrm{inf}}^{\dagger}(R)=W^{\dagger}(R^{\flat}) 
\end{equation*}
where the definition of $W^{\dagger}(R^{\flat})$ will be given below. It is a generalisation of $W^{\dagger}(\mathcal{O}^{\dagger}\langle\underline{T}^{\pm 1/p^{\infty}}\rangle^{\flat})=H(R)$ for $R=\mathcal{O}^{\dagger}\langle\underline{U}^{\pm 1/p^{\infty}}\rangle$.
\end{prop}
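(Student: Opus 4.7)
The plan is to mirror the Bhatt--Scholze equivalence \cite[Theorem 3.10]{BS22} levelwise along the Gauss-norm filtrations that define both sides, and then take $\varinjlim_\epsilon$. First I would pin down the definition of $W^\dagger(R^\flat)$ for a general perfectoid dagger algebra $R = \varinjlim_\epsilon R_\epsilon$ by setting
\begin{equation*}
W^\dagger(R^\flat) := \varinjlim_\epsilon \bigl\{(z_0,z_1,\ldots) \in W(R_\epsilon^\flat) : \lim_{i \to \infty}\bigl(i + \gamma_\epsilon(z_i)/p^i\bigr) = \infty\bigr\},
\end{equation*}
which extends the construction given for $R = \mathcal{O}^\dagger\langle \underline{T}^{\pm 1/p^\infty}\rangle$ in the preceding example. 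The tilt $R^\flat$ is formed as $\varinjlim_\epsilon R_\epsilon^\flat$, and comes with its own family of Gauss norms $\gamma_\epsilon$ inherited from those on $R_\epsilon$.

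Next I would check that both functors land in the claimed categories. For $H$, the classical Bhatt--Scholze result guarantees that the $(p,d)$-adic completion of $H(R)$ is $W(R^{\flat,\wedge})$, a perfect prism whose quotient modulo $d$ is $\widehat{R}$; weak completeness with respect to $(p,d)$ is built into the convergence condition $\lim_i(i + \gamma_\epsilon(z_i)/p^i) = \infty$; perfectness follows from the Witt-vector Frobenius acting componentwise and preserving the overconvergence condition. For $G$, if $B = S^{\dagger,\mathrm{perf}} = \varinjlim_\epsilon W^\dagger(\widehat{(S_\epsilon/p)^{\mathrm{perf}}})$ is a dagger perfection, then $B/d$ is obtained as the direct limit of the perfectoid $\mathcal{O}$-algebras $W(\widehat{(S_\epsilon/p)^{\mathrm{perf}}})/d$, and the induced Gauss norms on $B/d$ exhibit it as a perfectoid dagger algebra.

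For the equivalence itself I would construct the natural transformations $\eta: \mathrm{id} \to G \circ H$ and $\eta': \mathrm{id} \to H \circ G$ as the dagger refinements of those in \cite[Theorem 3.10]{BS22}. After $(p,d)$-adic completion both become isomorphisms; what remains is to show that they respect the $\epsilon$-filtration, i.e.\ that they restrict to maps between $R_\epsilon$ and the corresponding $W^\dagger$-subring, respectively between $S_\epsilon$ and its image. This reduces to a direct comparison of Gauss norms: for $x \in W(R^\flat)$ with Teichm\"uller expansion $x = \sum_i p^i [x_i]$, the norm $\gamma_\epsilon(x)$ on the Witt side is controlled by the norms $\gamma_\epsilon(x_i)$ on the tilt side, and under the sharp map these in turn control $\gamma_\epsilon$ on $R$, so that the overconvergence condition transfers faithfully in both directions.

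The main obstacle I anticipate is precisely this comparison of filtrations: one must verify that the overconvergence radius $\epsilon$ indexing $W^\dagger(R^\flat)$ is compatible, up to a universal (independent of $R$) shift, with the overconvergence radius indexing $R = \varinjlim_\epsilon R_\epsilon$, and similarly for $S$ and $S/p$. In particular, one must rule out that passing to the Witt-vector construction shrinks or expands the effective overconvergence radius in an uncontrolled way, which would prevent the direct limits from matching. Once this uniform comparison is secured, both composites $GH$ and $HG$ collapse to the identity essentially by the completed equivalence, and functoriality is inherited from \cite[Theorem 3.10]{BS22}.
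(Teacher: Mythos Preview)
Your proposal follows essentially the same route as the paper: define $H(R)=\varinjlim_\epsilon W^\dagger(R_\epsilon^\flat)$, verify that $G$ and $H$ land in the correct categories, invoke the completed Bhatt--Scholze equivalence, and then argue that the natural transformations $G\circ H\simeq\mathrm{id}$ and $H\circ G\simeq\mathrm{id}$ already hold before completion. The paper's proof is in fact quite terse at the last step, and you have correctly located where the work lies.

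One device the paper uses that you may find simplifies your filtration-comparison worry: rather than tracking $\epsilon$ through the Witt construction and back, the paper characterises $H(R)$ intrinsically as the \emph{smallest} $(p,d)$-weakly complete subring of $W(\widehat{R}^\flat)$ containing the Teichm\"uller lifts $[x]$ for $x\in R^\flat=\varinjlim_\epsilon R_\epsilon^\flat$, and dually describes $B=S^{\dagger,\mathrm{perf}}$ as the set of $\sum p^i[x_i]^{1/p^i}$ with $x_i\in (S/p)^{\mathrm{perfd}}$ satisfying $\inf_i\{i+\gamma_\epsilon(x_i)/p^i\}>-\infty$ for some $\epsilon$. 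With these coordinate-free descriptions, the identifications $H(G(B))=B$ and $G(H(R))=R$ become direct set-theoretic equalities of subrings inside the common completion, so the uniform shift of radii you were worried about never needs to be made explicit. The paper's final sentence (``since $G$ and $H$ preserve the dagger categories and are identities after completion, they are identities'') is really an appeal to this: both sides are weakly complete subobjects of the same completed object, determined by the same generating data.
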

\begin{proof}
We assume that a perfectoid dagger $\mathcal{O}$-algebra $R$ can be written as a direct limit $R=\varinjlim_{\epsilon}R_{\epsilon}$ where $R_{\epsilon}$ is a perfectoid $\mathcal{O}$-algebra. We omit here the exact description of $R_{\epsilon}$ which will become clear when we define an equivalence of categories between perfectoid dagger $K$-algebras and perfectoid dagger $K^{\flat}$-algebras for a perfectoid field $K$ and its tilt $K^{\flat}$.

Let $R_{\epsilon}^{\flat}$ be the tilt of $R_{\epsilon}$, so $R^{\flat}_{\epsilon}=\varprojlim R_{\epsilon}/p$, and define 
\begin{equation*}
B=H(R)=\varinjlim W^{\dagger}(R_{\epsilon}^{\flat})=:A_{\mathrm{inf}}^{\dagger}(R)\,.
\end{equation*}
It is the smallest $(p,d)$-weakly complete subring of $W(\widehat{R}^{\flat})$ containing $[x]$ for $x\in R^{\flat}:=\varinjlim_{\epsilon}R^{\flat}_{\epsilon}$. 

Conversely, let $B$ be a dagger perfect prism obtained as $B=S^{\dagger\mathrm{perf}}$ for a dagger prism $S$. Then let $(S/d)^{\mathrm{perfd}}=\varinjlim_{\epsilon}(S_{\epsilon}/d)^{\mathrm{perfd}}$ be the dagger perfectoidisation of $S/d$ and $((S/d)^{\mathrm{perfd}})^{\flat}$ its tilt. Then we have 
\begin{equation*}
(S/p)^{\mathrm{perfd}}=\varinjlim_{\epsilon}\widehat{(S_{\epsilon}/p)^{\mathrm{perf}}}=((S/d)^{\mathrm{perfd}})^{\flat}
\end{equation*}
and $\varinjlim_{\epsilon}\widehat{S_{\epsilon}^{\mathrm{perf}}}$ is the smallest $(p,d)$-weakly complete perfect $A$-subalgebra of $W((\widehat{(S/d)^{\mathrm{perfd}}})^{\flat})$ containing $S$. By definition, this consists of elements $\sum p^{i}[x_{i}]^{1/p^{i}}$, $x_{i}\in(S/p)^{\mathrm{perfd}}$ such that there exists $\epsilon>0$ with $\inf_{i}\{i+\frac{1}{p^{i}}\gamma_{\epsilon}(x_{i})\}>-\infty$, where $\gamma_{\epsilon}$ is a Gauss norm with respect to the $d$-adic topology. We define
\begin{equation*}
W^{\dagger}(((S/d)^{\mathrm{perfd}})^{\flat}):= W^{\dagger}((S/p)^{\mathrm{perfd}})=\varinjlim_{\epsilon}\widehat{S_{\epsilon}^{\mathrm{perf}}}=B
\end{equation*}
and $G(B)=B/d=R$. This is $p$-adically weakly complete. Let $\pi\in\widehat{R}$ be the element satisfying $\pi^{p}\neq p$ in the perfectoid ring $\widehat{R}$. It can be chosen to being in $R$: Let $d=a_{0}\mod p$ for $a_{0}\in R^{\flat}$. Define $\pi$ to be the image of $[a_{0}^{1/p}]$ in $R$. Then $\pi^{p}$ divides $p$ in $R$. Evidently $\varphi:R/p\rightarrow R/p$ is surjective and hence $R$ is dagger perfectoid.

After taking $(p,d)$-, resp. $\pi$-, adic completion, the functors $G\circ H$ and $H\circ G$ are identities. Since $G$ maps perfect dagger prisms to perfectoid dagger $\mathcal{O}$-algebras and $H$ maps perfectoid dagger $\mathcal{O}$-algebras to perfect dagger prisms, $G\circ H$ and $H\circ G$ are again identities as functors. The proposition follows.
\end{proof}

We will compute the (pre-)sheaf cohomology by \v{C}ech-Alexander complexes, explained in the next section. Now we show that $\mathcal{O}_{\Prism}^{\dagger}$ and $\overline{\mathcal{O}}_{\Prism}^{\dagger}$ are in fact sheaves on $(R/A)_{\Prism}^{\dagger}$ with respect to the Grothendieck topology generated by faithfully flat covers.

Let $(A,d)$ be a prism, and $S$ an overconvergent prism over $(A,d)$. As in Lemma \ref{Lemma}, write $S=\varinjlim_{\epsilon}S_{\epsilon}$ where $S_{\epsilon}$ is a $(p,d)$-complete $A$-algebra. We consider the site of all $(p,d)$-complete $A$-algebras with the topology where covers are faithfully flat $(p,d)$-complete coverings. Now let $S\rightarrow B$ be a faithfully flat cover of dagger prisms. Writing $B=\displaystyle\varinjlim_{\delta\rightarrow 0}B_{\delta}$ as in Lemma \ref{Lemma}, we can assume that there exists $\delta=\phi(\epsilon)$ for a continuous function $\phi:]0,\epsilon_{0}[\rightarrow]0,\phi(\epsilon_{0})[$ with $\displaystyle\varinjlim_{\epsilon\rightarrow 0}\phi(\epsilon)=0$, such that $B_{\phi(\epsilon)}$ is faithfully flat over $S_{\epsilon}$. After renaming indices $B_{\epsilon}$ is faithfully flat over $S_{\epsilon}$. By \cite[Corollary 3.12]{BS22} the functor that sends $(S,d)\rightarrow S$ (resp. $S/d$) forms a sheaf with vanishing higher cohomology. Indeed, consider the $(p,d)$-complete \v{C}ech-nerve $B^{\bigcdot}_{\epsilon}$ of $S_{\epsilon}\rightarrow B_{\epsilon}$. By the proof of \cite[Corollary 3.12]{BS22} the corresponding total complex is exact and the sheaf property holds. Since $B^{\bigcdot}=\displaystyle\varinjlim_{\epsilon\rightarrow 0}B_{\epsilon}^{\bigcdot}$ is the weakly completed (with respect to the $(p,d)$-adic topology) \v{C}ech-nerve of $S\rightarrow B$, it  satisfies faithfully flat descent and vanishing higher cohomology as well, by exactness of the direct limit. The proof for $S/d$ is similar. 

Then we have

\begin{prop}
$\mathcal{O}_{\Prism}^{\dagger}$ and $\overline{\mathcal{O}}_{\Prism}^{\dagger}$ define sheaves on $(R/A)_{\Prism}^{\dagger}$.
\end{prop}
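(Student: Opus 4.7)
The sheaf property is a local condition, so the plan is to verify that for every faithfully flat cover $S\to B$ in $(R/A)_{\Prism}^{\dagger}$ the augmented \v{C}ech--Alexander complex
\begin{equation*}
0\to S\to B\to B\otimes_{S}^{\dagger}B\to B\otimes_{S}^{\dagger}B\otimes_{S}^{\dagger}B\to\cdots
\end{equation*}
is exact, and similarly after applying $(-)/d$. Here $\otimes_{S}^{\dagger}$ denotes the appropriate $(p,d)$-weakly completed tensor product inside the dagger prismatic site, so that the \v{C}ech nerve $B^{\bigcdot}$ exists as a cosimplicial dagger prism. Once this exactness is established, vanishing of higher \v{C}ech cohomology follows by the usual argument, and hence $\mathcal{O}_{\Prism}^{\dagger}$ and $\overline{\mathcal{O}}_{\Prism}^{\dagger}$ are sheaves.

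The key observation, already isolated in the paragraph preceding the proposition, is the functorial presentation as a filtered colimit of the $(p,d)$-adically complete case. Using Lemma \ref{Lemma}, I would write $S=\varinjlim_{\epsilon}S_{\epsilon}$ and $B=\varinjlim_{\epsilon}B_{\epsilon}$ with $B_{\epsilon}$ faithfully flat over $S_{\epsilon}$ (after reindexing via the continuous function $\phi$ discussed above). At each level $\epsilon$ the classical \v{C}ech nerve $B_{\epsilon}^{\bigcdot}$ lies in the site of $(p,d)$-adically complete $A$-algebras, where \cite[Corollary 3.12]{BS22} guarantees that the corresponding augmented cosimplicial diagram is a resolution of $S_{\epsilon}$, and simultaneously that the mod-$d$ version is a resolution of $S_{\epsilon}/d$. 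This handles both $\mathcal{O}_{\Prism}^{\dagger}$ and $\overline{\mathcal{O}}_{\Prism}^{\dagger}$ in the complete setting.

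Next I would pass to the colimit. Since filtered colimits of abelian groups are exact, taking $\varinjlim_{\epsilon}$ of the exact augmented complexes for $(S_{\epsilon}\to B_{\epsilon})$ yields exactness of the augmented complex for $(S\to B)$ on the level of the weakly completed \v{C}ech nerve $B^{\bigcdot}=\varinjlim_{\epsilon}B_{\epsilon}^{\bigcdot}$. The same colimit argument applies mod $d$, since taking the quotient by $d$ commutes with filtered colimits. This gives the sheaf property, and the vanishing of higher \v{C}ech cohomology of a cover follows by the standard Godement-style argument applied levelwise and then passed through the colimit.

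The only subtle point, and the step I expect to require the most care, is the compatibility of the colimit presentations along the \v{C}ech nerve: one must check that iterated dagger tensor products $B\otimes_{S}^{\dagger}\cdots\otimes_{S}^{\dagger}B$ admit a compatible $\varinjlim_{\epsilon}$-presentation by $B_{\epsilon}\widehat{\otimes}_{S_{\epsilon}}\cdots\widehat{\otimes}_{S_{\epsilon}}B_{\epsilon}$ with faithful flatness preserved at each stage. This is essentially the content of the paragraph preceding the proposition, where the reindexing $\delta=\phi(\epsilon)$ is used; the same reindexing applied iteratively, together with the fact that faithful flatness is preserved under base change and $(p,d)$-adic completion of Noetherian-type tensor products, gives the required uniform presentation and lets the rest of the argument go through.
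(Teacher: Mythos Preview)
Your proposal is correct and follows exactly the approach the paper takes: the paper's proof is the one-line remark that it is ``straightforward from the previous arguments, using weakly completed \v{C}ech-nerves of faithfully flat covers of dagger prisms,'' and those previous arguments are precisely the $\varinjlim_{\epsilon}$ reduction to \cite[Corollary 3.12]{BS22} that you spell out. Your identification of the one subtle point---the compatible $\epsilon$-presentation of the iterated tensor products in the \v{C}ech nerve---is also exactly what the paper handles (somewhat implicitly) via the reindexing $\delta=\phi(\epsilon)$ in the preceding paragraph.
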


The proof is straightforward from the previous arguments, using weakly completed \v{C}ech-nerves of faithfully flat covers of dagger prisms. 

We have a natural map
\begin{equation*}
v:\mathrm{Shv}(R/A)_{\Prism}^{\dagger}\rightarrow\mathrm{Shv}(\mathrm{Spf}^{\dagger}R)_{\mathrm{\acute{e}t}}
\end{equation*}
which gives a complex of \'{e}tale sheaves
\begin{equation*}
\Prism^{\dagger}:=Rv_{\ast}\mathcal{O}_{\Prism}^{\dagger}\,.
\end{equation*}
Again we can define dagger prismatic cohomology as $R\Gamma(\mathrm{Spf}^{\dagger}R,\Prism^{\dagger})$.

By the obvious gluing process we obtain sheaves $\mathcal{O}_{\Prism}^{\dagger}$ and $\overline{\mathcal{O}}_{\Prism}^{\dagger}$ for any weak formal scheme $X$ and can define overconvergent prismatic cohomology as $R\Gamma(X,\Prism^{\dagger})$.

In analogy to prismatic cohomology we describe now the computation of overconvergent prismatic cohomology for smooth weak formal affine schemes $X=\mathrm{Spf}^{\dagger}R$ over $\mathrm{Spf}\,A/d$ using the \v{C}ech-Alexander complex. We recall the following lemma from \cite[Tag 07JM]{Stacks} (see also \cite[Lecture V, Lemma 4.3]{Bha18a}).

\begin{lemma}\label{stacks lemma}
Let $\mathscr{C}$ be a small category admitting finite non-empty products. Let $\mathcal{F}$ be an abelian presheaf. Assume there exists a weakly final object $X\in\mathscr{C}$, i.e. $\mathrm{Hom}(X,Y)\neq 0$ for all $Y\in\mathscr{C}$. Then $R\Gamma(\mathscr{C},\mathcal{F})$ is computed by the chain complex (the totalisation of) attached to 
\begin{equation*}
\mathcal{F}(X)\rightarrow\mathcal{F}(X\times X)\rightrightarrows\mathcal{F}(X\times X\times X)\mathrel{\substack{\textstyle\rightarrow\\[-0.4ex]
                      \textstyle\rightarrow \\[-0.4ex]
                      \textstyle\rightarrow}}\cdots
\end{equation*}
by applying $\mathcal{F}$ to the \v{C}ech-nerve of $X$.
\end{lemma}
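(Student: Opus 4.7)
The plan is to identify $R\Gamma(\mathscr{C},\mathcal{F})$ with $R\mathrm{Hom}_{\mathrm{PSh}_{\mathrm{Ab}}(\mathscr{C})}(\underline{\mathbb{Z}},\mathcal{F})$ in the abelian category of abelian presheaves on $\mathscr{C}$, where $\underline{\mathbb{Z}}$ is the constant presheaf and $\Gamma(-)=\mathrm{Hom}(\underline{\mathbb{Z}},-)$, and then to exhibit the displayed \v{C}ech--Alexander complex as $\mathrm{Hom}(-,\mathcal{F})$ applied to a projective resolution of $\underline{\mathbb{Z}}$ built from the \v{C}ech-nerve of $X$. The finite non-empty product assumption lets us form the \v{C}ech-nerve $X^{\bullet+1}$ of $X$ as a simplicial object of $\mathscr{C}$, and for any $Z\in\mathscr{C}$ the linearised representable $\mathbb{Z}[h_{Z}]$ (with $h_{Z}(Y)=\mathrm{Hom}(Y,Z)$) is a projective abelian presheaf, because Yoneda gives $\mathrm{Hom}_{\mathrm{PSh}_{\mathrm{Ab}}}(\mathbb{Z}[h_{Z}],\mathcal{G})=\mathcal{G}(Z)$ and evaluation is an exact functor. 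Assembling the face maps into an alternating sum yields a complex of projective abelian presheaves
\[
\cdots\to\mathbb{Z}[h_{X^{3}}]\to\mathbb{Z}[h_{X^{2}}]\to\mathbb{Z}[h_{X}]\to 0
\]
augmented to $\underline{\mathbb{Z}}$.

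The core step is to show this augmented complex is a resolution, which I would check object-by-object. Evaluating at $Y\in\mathscr{C}$ and using that $\mathrm{Hom}(Y,-)$ preserves products, one obtains the normalised chain complex of the simplicial set $S_{\bullet}(Y)=\mathrm{Hom}(Y,X)^{\bullet+1}$, which is the nerve of the \v{C}ech-groupoid of the constant map $\mathrm{Hom}(Y,X)\to\ast$. Whenever $\mathrm{Hom}(Y,X)$ is non-empty, picking any $x\in\mathrm{Hom}(Y,X)$ produces an extra degeneracy $s_{-1}(f_{0},\dots,f_{n})=(x,f_{0},\dots,f_{n})$ exhibiting $S_{\bullet}(Y)$ as a contractible simplicial set, so the evaluated chain complex has homology $\mathbb{Z}$ in degree $0$ and vanishes in positive degrees. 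The weakly final hypothesis on $X$, which supplies $\mathrm{Hom}(Y,X)\neq\emptyset$ for every $Y\in\mathscr{C}$, is exactly what is needed for this contraction to exist uniformly in $Y$.

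With the projective resolution in hand, applying $\mathrm{Hom}_{\mathrm{PSh}_{\mathrm{Ab}}}(-,\mathcal{F})$ computes $R\mathrm{Hom}(\underline{\mathbb{Z}},\mathcal{F})=R\Gamma(\mathscr{C},\mathcal{F})$; by Yoneda the resulting cochain complex is term-by-term exactly
\[
\mathcal{F}(X)\to\mathcal{F}(X\times X)\to\mathcal{F}(X\times X\times X)\to\cdots
\]
with the expected alternating sums of projection-induced differentials, i.e.\ the claimed \v{C}ech--Alexander complex. The main obstacle is the pointwise acyclicity step in the previous paragraph, since that is the sole place where the weakly final hypothesis enters the argument; everything else is the standard Yoneda dictionary combined with the elementary observation that an extra degeneracy produces a chain homotopy to the augmentation.
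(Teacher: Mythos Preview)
Your argument is correct and is the standard one: the paper does not supply its own proof but simply cites \cite[Tag 07JM]{Stacks} and \cite[Lecture V, Lemma 4.3]{Bha18a}, where essentially the same projective-resolution-via-linearised-representables argument is given. One minor point worth flagging: you correctly interpret ``weakly final'' as $\mathrm{Hom}(Y,X)\neq\emptyset$ for all $Y$, whereas the paper's statement writes $\mathrm{Hom}(X,Y)\neq 0$; this is a slip in the paper, as confirmed by its subsequent application (where in the opposite category one works with a weakly \emph{initial} object, i.e.\ one admitting maps to every other object of the prism category).
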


As in \cite[Lecture V, Corollary 5.2]{Bha18a} it is proved that $(R/A)_{\Prism}^{\dagger}$ admits finite non-empty coproducts. 

There exists a free $A$-algebra (a polynomial algebra) $F_{0}$ with a surjection $F_{0}\rightarrow R$, let $J_{0}=\ker(F_{0}\rightarrow R)$ and let $F_{\delta}$ be the weakly completed free $\delta$-$A$-algebra on $F_{0}$, following the construction in \cite[4.17]{BS22}. So if $F_{0}=A[T_{1},\ldots, T_{s}]$ then $F_{\delta}=A^{\dagger}\langle T_{i},\delta(T_{i}),\delta^{2}(T_{i}),\ldots\rangle_{i=1}^{s}$. Let $J_{0}=J_{0}F_{\delta}$ be the ideal in $F_{\delta}$ generated by $J_{0}$. Then we construct the dagger prism $(F,dF)$ by taking $F_{\delta}^{\dagger}\{\frac{J_{0}}{d}\}$ where $\dagger$ means weak completion with respect to the $(p,d)$-adic topology. It has the obvious universal property. We get the following commutative diagram 
\begin{equation*}
\begin{tikzpicture}[descr/.style={fill=white,inner sep=1.5pt}]
        \matrix (m) [
            matrix of math nodes,
            row sep=2.5em,
            column sep=2.5em,
            text height=1.5ex, text depth=0.25ex
        ]
        { A & F_{\delta} & F \\
       A/d & F_{\delta}/J_{0}F_{\delta} & F/dF \\};

        \path[overlay,->, font=\scriptsize] 
        (m-1-1) edge (m-1-2)
        (m-1-2) edge (m-1-3)
        (m-1-1) edge (m-2-1)
        (m-1-2) edge (m-2-2)
        (m-1-3) edge (m-2-3)
        (m-2-1) edge (m-2-2)
        (m-2-2) edge (m-2-3)
        ;
                                                        
\end{tikzpicture}
\end{equation*}
where the maps on the top are $\delta$-maps. We obtain an object $X$ in $(R/A)_{\Prism}^{\dagger}$
\begin{equation*}
X:=(R\rightarrow F/dF\leftarrow F)\,.
\end{equation*}
Then $X$ is a weakly initial object. Indeed, for any $(R\rightarrow B/dB\leftarrow B)\in (R/A)_{\Prism}^{\dagger}$ there exists a map $F_{0}\rightarrow B$ of $A$-algebras compatible with $R\rightarrow B/dB$. It extends to a map of $\delta$-$A$-algebras $F_{\delta}\rightarrow B$ and finally to a map of dagger prisms $F\rightarrow B$ by the universal property of $F$.

By Lemma \ref{stacks lemma} $\Prism_{R/A}^{\dagger}$ is computed by the cosimplicial $\delta$-$A$-algebra 
\begin{equation*}
F^{0}\rightarrow F^{1}\rightrightarrows F^{2}\mathrel{\substack{\textstyle\rightarrow\\[-0.4ex]
                      \textstyle\rightarrow \\[-0.4ex]
                      \textstyle\rightarrow}}\cdots
\end{equation*}
where $F^{n}=\mathcal{O}_{\Prism}^{\dagger}(X^{\times (n+1)})$, so $F=F^{0}$ and $F^{n}$ is a $d$-torsion-free $(p,d)$-weakly complete $\delta$-ring.

It can be constructed as follows: Let $F_{0}^{\bigcdot}$ be the $(p,d)$-weakly completed \v{C}ech-nerve of $A\rightarrow (F_{0})^\dagger=F^{0}_{0}$ and let $J^{\bigcdot}$ be the kernel of the augmentation $F_{0}^{\bigcdot}\rightarrow F_{0}^{0}\rightarrow R$. To each $F_{0}^{\bigcdot}$ we apply the above construction of a weakly completed $\delta$-$A$-algebra $F_{\delta}^{\bigcdot}$ on $F_{0}^{\bigcdot}$ with ideal $J^{\bigcdot}F_{\delta}^{\bigcdot}$ and take the associated prism $F^{\bigcdot}=F_{\delta}^{\bigcdot\dagger}\{\frac{J^{\bigcdot}}{d}\}$ to obtain a cosimplicial object $(F^{\bigcdot}\rightarrow F^{\bigcdot}/dF^{\bigcdot}\leftarrow R)$ in $(R/A)_{\Prism}^{\dagger}$ which is the \v{C}ech-nerve of $(F^{0}\rightarrow F^{0}/dF^{0}\leftarrow R)$, the latter being the weakly final object of the topos $\mathrm{Shv}(R/A)_{\Prism}^{\dagger}$. Hence $\Prism_{R/A}^{\dagger}\cong\text{totalisation of }F^{\bigcdot}$.

In the next section we derive our first comparison theorem, namely for the base prism $(A=W(k), I=(p))$ we can compare (rational) overconvergent prismatic cohomology of $(X/A)_{\Prism}^{\dagger}$, for a smooth $k$-scheme $X$, with rigid cohomology.

\section{The comparison with Monsky-Washnitzer, respectively rigid, cohomology}

Let $\overline{A}=k[T_{1},\ldots, T_{d}]/(\overline{f}_{1},\ldots, \overline{f}_{r})$ be a smooth $k$-algebra with Monsky-Washnitzer lift $A^{\dagger}=W(k)^{\dagger}[T_{1},\ldots, T_{d}]/(f_{1},\ldots, f_{r})$. Consider the following frame over $W(k)$:
\begin{equation*}
B=W(k)[T_{1},\ldots, T_{d}][Y_{1},\ldots, Y_{r}]/(f_{1}-pY_{1},\ldots, f_{r}-pY_{r})\,.
\end{equation*}
Then $B$ is an Elkik lift of $\overline{B}=B/p=\overline{A}[Y_{1},\ldots, Y_{r}]$. Let $B^{\dagger}$ be the Monsky-Washnitzer (= weak) completion of $B$, which is a Monsky-Washnitzer lift of $\overline{B}$. We have
\begin{equation*}
\Omega^{\bullet}_{B^{\dagger}/W(k)}\otimes\mathbb{Q}\cong\Omega^{\bullet}_{A^{\dagger}/W(k)}\otimes\mathbb{Q}
\end{equation*}
by \cite[Theorem 5.4]{MW68}. Now we use the ideas in \cite[Construction 4.17, 4.18]{BS22} to compute overconvergent prismatic cohomology using a \v{C}ech-Alexander complex.

Let $M^{\bigcdot}$ be the weak completed \v{C}ech-nerve of $W(k)\rightarrow M^{0}=W(k)[T_{1},\ldots, T_{d}]$, so $M^{n}=(W(k)[T_{1},\ldots, T_{d}])^{\otimes(n+1)})^{\dagger}$. Let $J^{\bigcdot}$ be the kernel of $M^{\bigcdot}\rightarrow M^{0}\rightarrow\overline{A}$. Take for each $M^{\bigcdot}$ the associated weakly completed free $\delta$-$W(k)$-algebra $F_{\delta}^{\bigcdot}$ on $M^{\bigcdot}$ with ideal $J^{\bigcdot}F_{\delta}^{\bigcdot}$ and let $B^{\dagger\bigcdot}=F^{\bigcdot\dagger}_{\delta}\langle\frac{J^{\bigcdot}}{p}\rangle$, which is the \v{C}ech-nerve of $B^{\dagger 0}=F_{\delta}^{0\dagger}\langle\frac{J^{0}}{p}\rangle$. Then $(B^{\dagger\bigcdot}\rightarrow B^{\dagger\bigcdot}/p\leftarrow\overline{A})$ is the \v{C}ech-nerve as cosimplicial object of $(B^{\dagger 0}\rightarrow B^{\dagger 0}/p\leftarrow\overline{A})$ in $(\overline{A}/W(k))_{\Prism}^{\dagger}$. Then it follows from Lemma \ref{stacks lemma} and the subsequent construction that any object in $(\overline{A}/W(k))_{\Prism}^{\dagger}$ receives a map from $(B^{\dagger 0}\rightarrow B^{\dagger 0}/p\leftarrow\overline{A})$ hence the latter object is a weakly initial object in $(\overline{A}/W(k))_{\Prism}^{\dagger}$. This implies that $\Prism^{\dagger}_{\overline{A}/W(k)}$ is computed by $B^{\dagger\bigcdot}$. Let $f_{1},\ldots, f_{r(n)}$ be generators of the ideal $J^{n}$. Then 
\begin{equation*}
B^{\dagger n}=F^{n\dagger}_{\delta}\langle Y_{1},\ldots, Y_{r(n)}\rangle/(f_{1}-pY_{1},\ldots, f_{r(n)}-pY_{r(n)})\,.
\end{equation*}
Then $B^{\dagger n}\otimes_{W(k)}k=\overline{A}[Z_{1},Z_{2},\ldots]$ is a polynomial algebra over $\overline{A}$ in infinitely many variables. It is then clear that $B^{\dagger n}$ is a direct limit of Monsky-Washnitzer lifts of polynomial algebras over $\overline{A}$ in finitely many variables. Applying the argument \cite[Theorem 5.4]{MW68} as at the beginning of this section we conclude that
\begin{equation*}
\Omega^{\bullet}_{B^{\dagger n}/W(k)}\otimes\mathbb{Q}\cong\Omega^{\bullet}_{A^{\dagger}/W(k)}\otimes\mathbb{Q}
\end{equation*}
for all $n$. We will see that an argument analogous to \cite{BdJ11} implies that 
\begin{equation*}
\Prism_{\overline{A}/W(k)}^{\dagger}\otimes\mathbb{Q}\cong \Omega^{\bullet}_{A^{\dagger}/W(k)}\otimes\mathbb{Q}\,.
\end{equation*}
Indeed, let $M^{r,s}=\Omega^{r}_{B^{\dagger s}/W(k)}\otimes\mathbb{Q}$, and consider $M^{\bullet, \bullet}$ as a first quadrant double complex. 

\begin{lemma}\label{homotopic to zero}
For $i>0$ the cosimplicial module $\Omega^{i}_{B^{\dagger\bigcdot}/W(k)}\otimes\mathbb{Q}$ is homotopy equivalent to zero.
\end{lemma}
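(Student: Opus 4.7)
The plan is to follow the \v{C}ech-Alexander argument of \cite{BdJ11}. The key observation is that $B^{\dagger\bigcdot}$ plays the role of a cosimplicial resolution of $\overline{A}$ in the dagger setting, with a hidden ``extra degeneracy'' visible only after inverting $p$ thanks to the Monsky-Washnitzer uniqueness of lifts up to chain homotopy on de Rham complexes.

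First I would construct retractions after rationalisation. By construction $B^{\dagger n}/p = \overline{A}[Z_{1}^{(n)},Z_{2}^{(n)},\ldots]$ is a polynomial algebra over $\overline{A}$, equipped with a canonical augmentation $Z_{j}^{(n)}\mapsto 0$ to $\overline{A}$. By the MW-lifting property of $A^{\dagger}$, this augmentation lifts (non-canonically) to a $W(k)$-algebra retraction $\sigma_{n}\colon B^{\dagger n}\to A^{\dagger}$ of the structure map $A^{\dagger}\to B^{\dagger n}$ (itself obtained in the same way from $\overline{A}\hookrightarrow\overline{A}[Z_{\bullet}^{(n)}]$). The crucial point is that although the family $\{\sigma_{n}\}$ need not commute strictly with the cosimplicial face and degeneracy maps of $B^{\dagger\bigcdot}$, the obstruction vanishes after $\otimes\mathbb{Q}$ by \cite[Theorem 5.4]{MW68} --- exactly the principle that underpins the identification $\Omega^{\bullet}_{B^{\dagger n}/W(k)}\otimes\mathbb{Q}\cong\Omega^{\bullet}_{A^{\dagger}/W(k)}\otimes\mathbb{Q}$ used in the discussion preceding the lemma.

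Second I would assemble the contracting homotopy. The retractions $\sigma_{n}$ supply an extra cosimplicial degeneracy on the augmented cosimplicial ring $A^{\dagger}\otimes\mathbb{Q}\to B^{\dagger\bigcdot}\otimes\mathbb{Q}$, up to coherent chain homotopy. After applying $\Omega^{i}(-/W(k))\otimes\mathbb{Q}$ and using $\Omega^{i}_{A^{\dagger}/A^{\dagger}}=0$ for $i>0$, the ``target'' of the extra degeneracy vanishes, so the coherent system of homotopies assembles into a genuine contracting homotopy on the normalised cochain complex of $\Omega^{i}_{B^{\dagger\bigcdot}/W(k)}\otimes\mathbb{Q}$. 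This is precisely the contractibility statement of the lemma.

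The main obstacle is controlling the overconvergence throughout the construction: each MW-lifting step used to define $\sigma_{n}$ and the coherence homotopies shrinks the Gauss radius by a controlled factor, and one must verify that the whole construction preserves a common radius of convergence as $n$ grows. Tracking the Gauss norms $\gamma_{\epsilon}$ as in the proof of Lemma \ref{Lemma}, each $\sigma_{n}$ restricts to $B^{\dagger n}_{\epsilon/p^{k(n)}}\to A^{\dagger}_{\epsilon'}$ for a controlled sequence $k(n)$; passing to the colimit over $\epsilon$ produces the desired homotopy on the full overconvergent cosimplicial module, completing the argument.
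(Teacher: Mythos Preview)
Your proposal has a genuine gap. The core confusion is in your second step: even if you succeeded in building coherent retractions $\sigma_n\colon B^{\dagger n}\otimes\mathbb{Q}\to A^\dagger\otimes\mathbb{Q}$ compatible with the cosimplicial structure, this would only show that $B^{\dagger\bigcdot}\otimes\mathbb{Q}$ is homotopy equivalent to the \emph{constant} cosimplicial ring $A^\dagger\otimes\mathbb{Q}$, and hence that $\Omega^i_{B^{\dagger\bigcdot}/W(k)}\otimes\mathbb{Q}$ is homotopy equivalent to the constant cosimplicial module $\Omega^i_{A^\dagger/W(k)}\otimes\mathbb{Q}$. But a nonzero constant cosimplicial module is \emph{not} homotopy equivalent to zero: its normalised complex has the module sitting in degree~$0$. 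Your line ``using $\Omega^i_{A^\dagger/A^\dagger}=0$'' does not help, since the differentials in question are taken over $W(k)$, not over $A^\dagger$, and $\Omega^i_{A^\dagger/W(k)}\neq 0$. Note also that retractions to $A^\dagger$ are not the same data as an extra degeneracy on the augmented cosimplicial object, which would require maps $B^{\dagger(n-1)}\to B^{\dagger n}$ satisfying the simplicial identities; and \cite[Theorem~5.4]{MW68} gives equality in cohomology, not the coherent system of homotopies you need.

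The paper's argument is entirely different and uses neither retractions, nor MW uniqueness, nor rationalisation in any essential way. It invokes \cite[Lemma~2.15]{BdJ11} directly: for the \v{C}ech nerve $P^{\bigcdot}$ of $W(k)\to W(k)[T_1,\ldots,T_d]$, the cosimplicial module $\Omega^i_{P^{\bigcdot}/W(k)}$ carries an \emph{explicit} contracting homotopy, built from the extra tensor factor in $P^{n+1}=P^{0}\otimes P^{n}$ (this is already integral). One then passes to the weak completion $M^{\bigcdot}$, to the free $\delta$-algebra $F_\delta^{\bigcdot}$ (still free on generators, so the same argument applies), and finally uses \cite[Lemma~2.17]{BdJ11} to tensor with the $F_\delta^{\bigcdot}$-module $B^{\dagger\bigcdot}$: a cosimplicial module homotopic to zero remains so after levelwise tensoring with any cosimplicial module over the same cosimplicial ring. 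The contracting homotopy thus comes from the \v{C}ech-nerve structure of the ambient free algebra, not from any comparison with $A^\dagger$.
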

\begin{proof}
This holds for the cosimplicial module $\Omega^{i}_{M^{\bigcdot}/W(k)}$ by taking the weak completion of $\Omega^{i}_{P^{\bigcdot}/W(k)}$ where $P^{\bigcdot}$ is the \v{C}ech-nerve of $W(k)\rightarrow W(k)[T_{1},\ldots, T_{d}]$ and using \cite[Lemma 2.15, Lemma 2.17]{BdJ11}. The same argument implies the statement for $\Omega^{i}_{F_{\delta}^{\bigcdot}/W(k)}$ and then one has to tensor the cosimplicial module $\Omega^{i}_{F_{\delta}^{\bigcdot}/W(k)}$ with the $F_{\delta}^{\bigcdot}$-module $B^{\dagger\bigcdot}$ to prove the lemma.
\end{proof}

Each column complex $M^{\bullet,s}$ is quasi-isomorphic to $M^{\bullet,0}=\Omega^{\bullet}_{A^{\dagger}/W(k)}\otimes\mathbb{Q}$. The complex $M^{0,\bullet}$ computes $\Prism_{\overline{A}/W(k)}^{\dagger}\otimes\mathbb{Q}$. The total complex $\mathrm{Tot}(M^{\bullet,\bullet})$ computes the cohomology of the de Rham complex $\Omega^{\bullet}_{A^{\dagger}/W(k)}\otimes\mathbb{Q}$ by the first spectral sequence associated to the double complex $M^{\bullet, \bullet}$. On the other hand, by Lemma \ref{homotopic to zero}, $\mathrm{Tot}(M^{\bullet,\bullet})$ also computes the cohomology of $M^{0,\bullet}$ by the second spectral sequence. Hence we have proven:

\begin{thm}\label{Monsky-Washnitzer}
Let $\overline{R}$ be a smooth $k$-algebra with Monsky-Washnitzer lift $R^{\dagger}$. Then we have a canonical isomorphism 
\begin{equation*}
\Prism_{\overline{R}/W(k)}^{\dagger}\otimes\mathbb{Q}\cong\Omega^{\bullet}_{R^{\dagger}/W(k)}\otimes\mathbb{Q}\,.
\end{equation*}
\end{thm}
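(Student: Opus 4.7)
The plan is to compute $\Prism^{\dagger}_{\overline{R}/W(k)}\otimes\mathbb{Q}$ via the \v{C}ech-Alexander cosimplicial object $B^{\dagger\bigcdot}$ constructed above and to compare it with $\Omega^{\bullet}_{R^{\dagger}/W(k)}\otimes\mathbb{Q}$ through a double complex argument modeled on \cite{BdJ11}. Concretely, I form the first-quadrant double complex $M^{\bullet,\bullet}$ with $M^{r,s}=\Omega^{r}_{B^{\dagger s}/W(k)}\otimes\mathbb{Q}$, horizontal differentials being the cosimplicial differentials of $B^{\dagger\bigcdot}$ and vertical differentials the de Rham ones. The goal is to show that both standard spectral sequences of $M^{\bullet,\bullet}$ converge to $H^{\ast}(\mathrm{Tot}\,M^{\bullet,\bullet})$ and to identify this total cohomology with the two sides of the desired isomorphism.

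In the vertical (de Rham) direction I would apply Monsky-Washnitzer rigidity \cite[Theorem 5.4]{MW68}: for each simplicial degree $s$, $B^{\dagger s}$ is a filtered colimit of Monsky-Washnitzer lifts of polynomial extensions of $\overline{R}$ (in finitely many variables), so the Elkik-lift trick used at the beginning of the section gives a quasi-isomorphism $\Omega^{\bullet}_{B^{\dagger s}/W(k)}\otimes\mathbb{Q}\simeq\Omega^{\bullet}_{R^{\dagger}/W(k)}\otimes\mathbb{Q}$. Consequently every column $M^{\bullet,s}$ has the same cohomology as the column $M^{\bullet,0}$, and the first spectral sequence degenerates after one page to yield $\mathrm{Tot}\,M^{\bullet,\bullet}\simeq\Omega^{\bullet}_{R^{\dagger}/W(k)}\otimes\mathbb{Q}$.

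In the horizontal (cosimplicial) direction I would invoke Lemma \ref{homotopic to zero}, which asserts that the cosimplicial module $\Omega^{i}_{B^{\dagger\bigcdot}/W(k)}\otimes\mathbb{Q}$ is homotopy equivalent to zero for $i>0$. Only the row $i=0$ survives in the second spectral sequence, giving $B^{\dagger\bigcdot}\otimes\mathbb{Q}$, whose totalisation computes $\Prism^{\dagger}_{\overline{R}/W(k)}\otimes\mathbb{Q}$ by the \v{C}ech-Alexander description. Equating the two expressions for $H^{\ast}(\mathrm{Tot}\,M^{\bullet,\bullet})$ produces the required canonical isomorphism.

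The main obstacle is Lemma \ref{homotopic to zero} itself. One needs the contracting homotopies from the classical polynomial case of \cite{BdJ11} to survive each of the three successive constructions: weak completion in the $(p,d)$-adic topology of the \v{C}ech-nerve $M^{\bigcdot}$, passage to the weakly completed free $\delta$-algebra $F_{\delta}^{\bigcdot}$, and passage to the prismatic envelope $F_{\delta}^{\bigcdot\dagger}\{J^{\bigcdot}/p\}$. This amounts to a quantitative control of the relevant Gauss norms under each operation, together with a final base change of the $F_{\delta}^{\bigcdot}$-linear contracting homotopy along the map $F_{\delta}^{\bigcdot}\to B^{\dagger\bigcdot}$, as sketched in the proof of the lemma. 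The deduction of the global statement for smooth $k$-schemes in Corollary \ref{rigid} will then follow by Zariski gluing and standard compatibility of rigid cohomology with Monsky-Washnitzer in the affine case.
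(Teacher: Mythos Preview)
Your proposal is correct and follows essentially the same route as the paper: both form the double complex $M^{r,s}=\Omega^{r}_{B^{\dagger s}/W(k)}\otimes\mathbb{Q}$, use \cite[Theorem~5.4]{MW68} to identify every column with $\Omega^{\bullet}_{R^{\dagger}/W(k)}\otimes\mathbb{Q}$, and invoke Lemma~\ref{homotopic to zero} (proved exactly as you outline, by transporting the \cite{BdJ11} homotopies through weak completion, the free $\delta$-algebra, and then base change to $B^{\dagger\bigcdot}$) to collapse the horizontal direction onto the row $r=0$. The only cosmetic difference is that you phrase the column argument as degeneration of the first spectral sequence, whereas the paper simply asserts the column quasi-isomorphisms and reads off the totalisation; the content is identical.
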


Now we globalise the above comparison: Let $\{U_{i}\}$ be an affine covering, $U_{i}=\mathrm{Spec}\,A_{i}\subset X$, of $X$, a smooth $k$-scheme. $R\Gamma(X,\Prism_{X/W(k)}^{\dagger})$ is the total complex of 
\begin{equation*}
\prod_{i}\Prism_{U_{i}/W(k)}^{\dagger}\rightrightarrows\prod_{i,j}\Prism_{U_{i}\cap U_{j}/W(k)}^{\dagger}\mathrel{\substack{\textstyle\rightarrow\\[-0.4ex]
                      \textstyle\rightarrow \\[-0.4ex]
                      \textstyle\rightarrow}}\cdots\,.
\end{equation*}
On the other hand, we have a commutative diagram
\begin{equation*}
\begin{tikzpicture}
    \node (A) at (0,0) {$\displaystyle\prod_{i}\Prism_{U_{i}/W(k)}^{\dagger}\otimes\mathbb{Q}$};
    \node (B) at (4,0) {$\displaystyle\prod_{i,j}\Prism_{U_{i}\cap U_{j}/W(k)}^{\dagger}\otimes\mathbb{Q}$};
    \node (C) at (0,-2) {$\displaystyle\prod_{i}W^{\dagger}\Omega^{\bullet}_{U_{i}/k}\otimes\mathbb{Q}$};
    \node (D) at (4,-2) {$\displaystyle\prod_{i,j}W^{\dagger}\Omega^{\bullet}_{U_{i}\cap U_{j}/k}\otimes\mathbb{Q}$};
    \node (E) at (7,0) {$\cdots$};
    \node (F) at (7,-2) {$\cdots$};
    \draw[transform canvas={yshift=0.9ex},->] (A) -- (B);
    \draw[transform canvas={yshift=0.2ex},->] (A) -- (B);
    \draw[transform canvas={yshift=0.9ex},->] (C) -- (D);
    \draw[transform canvas={yshift=0.2ex},->] (C) -- (D);
    \draw[transform canvas={yshift=0.9ex},->] (B) -- (E);
    \draw[transform canvas={yshift=0.2ex},->] (B) -- (E);
    \draw[transform canvas={yshift=1.6ex},->] (B) -- (E);
    \draw[transform canvas={yshift=0.9ex},->] (D) -- (F);
    \draw[transform canvas={yshift=0.2ex},->] (D) -- (F);
    \draw[transform canvas={yshift=1.6ex},->] (D) -- (F);
    \path[overlay,->, font=\scriptsize] 
        (A) edge node[left]{$\simeq$}(C)
        (B) edge node[right]{$\simeq$}(D);
\end{tikzpicture} 
\end{equation*}
where the isomorphisms come from \cite[Corollary 3.25]{DLZ11}: we have isomorphisms 
\begin{equation*}
\Prism_{\mathrm{Spec}\,\overline{B}/W(k)}^{\dagger}\otimes\mathbb{Q}\simeq\Omega^{\bullet}_{B^{\dagger}/W(k)}\otimes\mathbb{Q}\simeq W^{\dagger}\Omega^{\bullet}_{\overline{B}/k}\otimes\mathbb{Q}
\end{equation*}
for any smooth $k$-algebra $\overline{B}$ with Monsky-Washnitzer lift $B^{\dagger}$. This implies the existence of a map
\begin{equation*}
\begin{tikzpicture}
 \node (G) at (-1.6,0) {$R\Gamma(X,\Prism^{\dagger}_{X/W(k)}\otimes\mathbb{Q})=$};
    \node (A) at (2,0) {$\mathrm{Tot}\biggl(\displaystyle\prod_{i}\Prism_{U_{i}/W(k)}^{\dagger}\otimes\mathbb{Q}$};
    \node (B) at (6,0) {$\displaystyle\prod_{i,j}\Prism_{U_{i}\cap U_{j}/W(k)}^{\dagger}\otimes\mathbb{Q}$};
     \node (E) at (9,0) {$\cdots\biggr)$};
     \node (C) at (2,-2) {$\mathrm{Tot}\biggl(\displaystyle\prod_{i}W^{\dagger}\Omega^{\bullet}_{U_{i}/k}\otimes\mathbb{Q}$};
    \node (D) at (6,-2) {$\displaystyle\prod_{i,j}W^{\dagger}\Omega^{\bullet}_{U_{i}\cap U_{j}/k}\otimes\mathbb{Q}$};
    \node (F) at (9,-2) {$\cdots\biggr)$};
    \node (H) at (-1.6,-2) {$R\Gamma(X,W^{\dagger}\Omega^{\bullet}_{X/k}\otimes\mathbb{Q})=$};
    \draw[transform canvas={yshift=0.9ex},->] (A) -- (B);
    \draw[transform canvas={yshift=0.2ex},->] (A) -- (B);
    \draw[transform canvas={yshift=0.9ex},->] (C) -- (D);
    \draw[transform canvas={yshift=0.2ex},->] (C) -- (D);
    \draw[transform canvas={yshift=1.2ex},->] (B) -- (E);
    \draw[transform canvas={yshift=0.7ex},->] (B) -- (E);
    \draw[transform canvas={yshift=0.2ex},->] (B) -- (E);
    \draw[transform canvas={yshift=1.2ex},->] (D) -- (F);
    \draw[transform canvas={yshift=0.7ex},->] (D) -- (F);
    \draw[transform canvas={yshift=0.2ex},->] (D) -- (F);
    \path[overlay,->, font=\scriptsize] 
        (A) edge node[left]{$\simeq$}(C)
        (B) edge node[right]{$\simeq$}(D)
        (G) edge (H);
       
       \end{tikzpicture} 
\end{equation*}
due to the sheaf properties of $\Prism^{\dagger}_{X/W(k)}$ and $W^{\dagger}\Omega^{\bullet}_{X/k}$, which by construction is an isomorphism.

\begin{cor}\label{rigid}
Let $X$ be a smooth $k$-scheme. Then we have an isomorphism
\begin{equation*}
R\Gamma(X,\Prism^{\dagger}_{X/W(k)}\otimes\mathbb{Q})\cong R\Gamma_{\mathrm{rig}}(X/W(k)\otimes\mathbb{Q})
\end{equation*}
between rational overconvergent prismatic and rigid cohomology.
\end{cor}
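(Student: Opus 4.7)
The plan is to combine two ingredients that have essentially already been assembled in the preceding discussion: (a) the globalised isomorphism $R\Gamma(X,\Prism^{\dagger}_{X/W(k)}\otimes\mathbb{Q})\cong R\Gamma(X,W^{\dagger}\Omega^{\bullet}_{X/k}\otimes\mathbb{Q})$ coming from the commutative diagram just above the corollary, and (b) the Davis--Langer--Zink comparison between the rational overconvergent de Rham--Witt cohomology of a smooth $k$-scheme and its rigid cohomology. Thus the corollary is essentially a matter of combining Theorem \ref{Monsky-Washnitzer}, its globalisation via \v{C}ech covers, and the known identification of $W^{\dagger}\Omega^{\bullet}_{X/k}\otimes\mathbb{Q}$ with rigid cohomology.

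In detail, first I would record that by the sheaf property of $\Prism^{\dagger}_{X/W(k)}$ established in \S1, the cohomology $R\Gamma(X,\Prism^{\dagger}_{X/W(k)}\otimes\mathbb{Q})$ is computed by the \v{C}ech complex on an affine open cover $\{U_{i}=\mathrm{Spec}\,A_{i}\}$ of $X$. Each affine piece $A_{i}$ admits a Monsky--Washnitzer lift $A_{i}^{\dagger}$, so Theorem \ref{Monsky-Washnitzer} gives $\Prism^{\dagger}_{A_{i}/W(k)}\otimes\mathbb{Q}\simeq\Omega^{\bullet}_{A_{i}^{\dagger}/W(k)}\otimes\mathbb{Q}$ and similarly for intersections $U_{i}\cap U_{j}$ (which remain affine of the same type). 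Composing with the isomorphism $\Omega^{\bullet}_{A_{i}^{\dagger}/W(k)}\otimes\mathbb{Q}\simeq W^{\dagger}\Omega^{\bullet}_{A_{i}/k}\otimes\mathbb{Q}$ from \cite[Corollary 3.25]{DLZ11} yields the commutative diagram displayed before the corollary, whose totalisation gives a quasi-isomorphism
\begin{equation*}
R\Gamma(X,\Prism^{\dagger}_{X/W(k)}\otimes\mathbb{Q})\simeq R\Gamma(X,W^{\dagger}\Omega^{\bullet}_{X/k}\otimes\mathbb{Q})\,.
\end{equation*}

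Next I would invoke the main comparison theorem of \cite{DLZ11}, which asserts that for a smooth $k$-scheme $X$ the rational overconvergent de Rham--Witt cohomology $R\Gamma(X,W^{\dagger}\Omega^{\bullet}_{X/k})\otimes\mathbb{Q}$ is canonically isomorphic to rigid cohomology $R\Gamma_{\mathrm{rig}}(X/W(k))\otimes\mathbb{Q}$. Chaining this with the previous quasi-isomorphism produces the desired identification
\begin{equation*}
R\Gamma(X,\Prism^{\dagger}_{X/W(k)}\otimes\mathbb{Q})\cong R\Gamma_{\mathrm{rig}}(X/W(k))\otimes\mathbb{Q}\,.
\end{equation*}

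The only point requiring genuine care, and hence the main potential obstacle, is checking that the local comparison isomorphisms of Theorem \ref{Monsky-Washnitzer} and of \cite[Cor.~3.25]{DLZ11} are compatible with restriction to affine opens and with the \v{C}ech differentials, so that they assemble into a map of \v{C}ech double complexes rather than merely a collection of local quasi-isomorphisms. This is a naturality check: the Monsky--Washnitzer lifts along inclusions $U_{i}\cap U_{j}\hookrightarrow U_{i}$ can be chosen functorially (after passing to a refinement), and both sides of the comparison depend functorially on the chosen MW-lift. Granting this functoriality, the passage to the totalisation is formal, and the corollary follows.
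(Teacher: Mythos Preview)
Your approach is essentially the same as the paper's: reduce to the already-established global isomorphism $R\Gamma(X,\Prism^{\dagger}_{X/W(k)}\otimes\mathbb{Q})\simeq R\Gamma(X,W^{\dagger}\Omega^{\bullet}_{X/k}\otimes\mathbb{Q})$ and then invoke the comparison of rational overconvergent de Rham--Witt cohomology with rigid cohomology. The one point to sharpen is the citation for the second step: \cite[Theorem 4.40]{DLZ11} only treats the quasiprojective case, and for an arbitrary smooth $k$-scheme one needs the extension in \cite{Law18}; the paper's own proof consists precisely of these two references.
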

\begin{proof}
See \cite[Theorem 4.40]{DLZ11} for the case that $X$ is quasiprojective, and \cite{Law18} for the general case.
\end{proof}

\section{\'{E}tale comparison of overconvergent prismatic cohomology}

Let $(A,d)$ be a perfect prism, $A/d=\mathcal{O}$, and $X/\mathcal{O}$ a smooth weak formal scheme with generic fibre $X_{\eta}=X\times_{\mathrm{Spec}\,\mathcal{O}}\mathrm{Spec}\,\mathcal{O}[1/p]$. Then we have
\begin{thm}\label{etale comparison}
Let $\mu:(X_{\eta})_{\mathrm{\acute{e}t}}\rightarrow (X)_{\mathrm{\acute{e}t}}$. Then we have a canonical isomorphism
\begin{equation*}
R\mu_{\ast}\mathbb{Z}/p^{n}\cong\left(\Prism_{X/A}^{\dagger}[1/d]/p^{n}\right)^{\varphi=1}\,.
\end{equation*}
If $X=\mathrm{Spf}^{\dagger}S$ is affine with generic fibre $\mathrm{Spec}\,S[1/p]$ an affinoid dagger variety, then we have
\begin{equation*}
R\Gamma(\mathrm{Spec}\,S[1/p],\mathbb{Z}/p^{n})\cong\left(\Prism_{S/A}^{\dagger}[1/d]/p^{n}\right)^{\varphi=1}\,.
\end{equation*}
\end{thm}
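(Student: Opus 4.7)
The plan is to adapt the strategy of \cite[Theorem 9.1]{BS22} to the dagger setting. Both $R\mu_{\ast}\mathbb{Z}/p^{n}$ and the assignment $U\mapsto(\Prism^{\dagger}_{U/A}[1/d]/p^{n})^{\varphi=1}$ are sheaves on $X_{\mathrm{\acute{e}t}}$, so it suffices to prove the second, affine statement and then sheafify. Thus I assume $X=\mathrm{Spf}^{\dagger}S$ with $S$ smooth over $\mathcal{O}$.

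The key local construction is a perfectoid dagger cover. Choose an étale framing $\mathcal{O}^{\dagger}\langle T_{1}^{\pm 1},\ldots,T_{d}^{\pm 1}\rangle\rightarrow S$ and form the base change
\begin{equation*}
S_{\infty}:=S\otimes^{\dagger}_{\mathcal{O}^{\dagger}\langle\underline{T}^{\pm 1}\rangle}\mathcal{O}^{\dagger}\langle\underline{T}^{\pm 1/p^{\infty}}\rangle,
\end{equation*}
the second factor being the dagger perfectoid torus of the main example preceding Definition 0.5. Then $S_{\infty}$ is a perfectoid dagger $\mathcal{O}$-algebra, so by Proposition \ref{equivalence of cats} it corresponds to a dagger perfect prism
\begin{equation*}
B_{\infty}:=A^{\dagger}_{\mathrm{inf}}(S_{\infty})=W^{\dagger}(S_{\infty}^{\flat})\in(S/A)_{\Prism}^{\dagger,\mathrm{perf}}.
\end{equation*}
The map $S\rightarrow S_{\infty}$ is pro-(finite-étale) on generic fibres with Galois group $\Gamma\cong\mathbb{Z}_{p}(1)^{d}$.

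Next I compute $\Prism^{\dagger}_{S/A}$ via the Čech-Alexander complex of $(B_{\infty}\rightarrow B_{\infty}/d\leftarrow S)$ in $(S/A)_{\Prism}^{\dagger}$. Each term of the nerve is again a dagger perfect prism, corresponding via Proposition \ref{equivalence of cats} to an iterated self-product of $S_{\infty}$ in perfectoid dagger algebras; the total complex represents $\Prism^{\dagger}_{S/A}$ as in the discussion after Lemma \ref{stacks lemma}. Pro-étale descent along $S\rightarrow S_{\infty}$, carried out as in \cite[\S8]{BMS18} and \cite[\S9]{BS22}, identifies this Čech complex after inverting $d$ and reducing mod $p^{n}$ with continuous $\Gamma$-cohomology of $W_{n}^{\dagger}(S_{\infty}^{\flat})[1/d]$, and then with $R\Gamma_{\mathrm{pro\acute{e}t}}(\mathrm{Spec}\,S[1/p],\mathcal{F})$ for the corresponding pro-étale sheaf $\mathcal{F}$.

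Finally I take $\varphi$-fixed points. For this I need, for every perfectoid dagger algebra $R$ with tilt $R^{\flat}$, a short exact sequence of (pro-)étale sheaves on $\mathrm{Spec}\,R[1/p]$
\begin{equation*}
0\rightarrow\mathbb{Z}/p^{n}\rightarrow W_{n}^{\dagger}(R^{\flat})[1/d]\xrightarrow{\varphi-1}W_{n}^{\dagger}(R^{\flat})[1/d]\rightarrow 0.
\end{equation*}
After tilting, the right-hand terms live on $\mathrm{Spec}\,R^{\flat}[1/d]$ where, in characteristic $p$, the sequence becomes the classical Artin-Schreier sequence. Applying this term by term to the Čech nerve and feeding it into the descent computation gives the asserted isomorphism.

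The main obstacle is establishing the Artin-Schreier sequence above in the dagger setting, i.e. the overconvergent refinement of almost purity flagged in the introduction. Ordinary surjectivity of $\varphi-1$ on a perfectoid ring relies on iterated extraction of $p$-th roots, and in the dagger setting one must perform these extractions without crushing the radius of convergence $\epsilon$ governing $W^{\dagger}$. Achieving this uniformly, on a cofinal system of overconvergent rings, and verifying that the kernel remains $\mathbb{Z}/p^{n}$, is the real technical heart of the theorem; once it is in hand the rest of the argument is a routine transcription of the classical Bhatt--Morrow--Scholze and Bhatt--Scholze computations.
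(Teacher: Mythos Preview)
Your argument has a real gap at the step where you claim the \v{C}ech nerve of $(B_{\infty}\rightarrow B_{\infty}/d\leftarrow S)$ computes $\Prism^{\dagger}_{S/A}$. The discussion after Lemma~\ref{stacks lemma} applies only when the chosen object is \emph{weakly initial}, and the weakly initial object constructed there is built from a free $\delta$-algebra, not from a perfect prism. There is no reason an arbitrary dagger prism $(C,dC)$ over $S$ admits a map from $B_{\infty}=W^{\dagger}(S_{\infty}^{\flat})$: that would force $C$ to receive all the $p$-power roots of the coordinates. What you would actually need is a descent statement---that $B_{\infty}$ covers the final object of the dagger prismatic topos---and this is neither in \cite[\S8]{BMS18} (which concerns $A\Omega$, not prismatic cohomology) nor in \cite[\S9]{BS22} (which uses $\mathrm{arc}$-descent, not a fixed Kummer tower). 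So the identification of the \v{C}ech complex with $\Prism^{\dagger}_{S/A}$ is unjustified, and the subsequent passage to $\Gamma$-cohomology rests on it.

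The paper's route is quite different and avoids this issue entirely. It takes the genuine \v{C}ech--Alexander complex $C^{\bullet}$ (built from free $\delta$-algebras), writes each term as $\varinjlim_{\epsilon}C^{\bullet}_{\epsilon}$ with $C^{\bullet}_{\epsilon}$ $(p,d)$-complete, and uses that the functors $M\mapsto (M[1/d])^{\varphi=1}$ commute with this filtered colimit (via \cite[Lemma~9.2]{BS22}). This reduces the left-hand side to $\varinjlim_{\epsilon}(\Prism_{S_{\epsilon}/A,\mathrm{perf}}[1/d]/p^{n})^{\varphi=1}$, while the right-hand side is $\varinjlim_{\epsilon}R\Gamma(\mathrm{Spec}\,S_{\epsilon}[1/p],\mathbb{Z}/p^{n})$; both are then $\mathrm{arc}_{p}$-sheaves on $\mathrm{Spf}\,S$, so one may pass to a dagger-perfectoid $\mathrm{arc}$-cover and finish by tilting and Artin--Schreier--Witt. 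In particular, no $\Gamma$-cohomology computation is needed, and the ``overconvergent almost purity'' problem you flag as the main obstacle does not arise here---that machinery is developed later in the paper for the comparison with $A^{\dagger}\Omega$, not for the \'etale comparison.
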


\begin{rem}\label{remark 3.2}
\begin{itemize}
\item[-] Theorem \ref{etale comparison} is the dagger analogue of \cite[Theorem 9.1]{BS22}.

\item Let $\widehat{X}=\mathrm{Spf}\,\widehat{S}$ be the formal smooth scheme associated to $\mathrm{Spf}^{\dagger}S$ with generic fibre $\mathrm{Spec}\,\widehat{S}[1/p]$, a rigid variety. Then 
\begin{equation*}
R\Gamma(\mathrm{Spec}\,S[1/p],\mathbb{Z}/p^{n})\cong R\Gamma(\mathrm{Spec}\,\widehat{S}[1/p],\mathbb{Z}/p^{n})\,.
\end{equation*}
The \'{e}tale cohomology of an affinoid dagger variety coincides with the \'{e}tale cohomology of its associated affinoid variety \cite[Propositon 3.5]{CN20}. Hence the theorem implies that $\left(\Prism_{S/A}^{\dagger}[1/d]/p^{n}\right)^{\varphi=1}$ is isomorphic to $\left(\Prism_{\widehat{S}/A}[1/d]/p^{n}\right)^{\varphi=1}$.
\end{itemize}
\end{rem}

\begin{proof}
Let $C^{\bigcdot}$ be the \v{C}ech-Alexander complex that computes $\Prism_{S/A}^{\dagger}$. We have a simplicial object $(C^{\bigcdot}\rightarrow C^{\bigcdot}/dC^{\bigcdot}\leftarrow S)$ in $(S/A)^{\dagger}_{\Prism}$. Each $C^{i}$ is a dagger prism and is weakly complete with respect to the $(p,d)$-adic topology: $C^{i}=(B_{\delta}^{i})^{\dagger}\{\frac{J^{i}}{d}\}$ where $B_{\delta}^{i}$ is the $(p,d)$-weakly completed $\delta$-$B^{i}$-algebra associated to $B^{i}$ and where $B^{\bigcdot}$ is the \v{C}ech-nerve of $A\rightarrow B^{0}$, a $(p,d)$-weakly complete free $A$-algebra with kernel $J^{0}=\ker(B^{0}\rightarrow S)$ and $J^{\bigcdot}=\ker(B^{\bigcdot}\rightarrow S)$. Then $C^{i}=\displaystyle\varinjlim_{\epsilon\rightarrow 0}C^{i}_{\epsilon}$ (as in Lemma \ref{Lemma}). The family of Gauss norms $\gamma_{\epsilon}$ defining $C^{i}_{\epsilon}$ can be chosen in a compatible way on $C^{\bigcdot}$ to get simplicial complexes $C^{\bigcdot}_{\epsilon}$ of $(p,d)$-complete $A$-algebras with $\varinjlim_{\epsilon}C_{\epsilon}^{\bigcdot}=C^{\bigcdot}$. Then $C_{\epsilon}^{\bigcdot}/p$ is $d$-adically complete. Using \cite[Lemma 9.2]{BS22}, we see that $C_{\epsilon}^{\bigcdot}/p\in D_{\mathrm{comp}}(\mathcal{O}^{\flat}[F])$ and taking $\varinjlim_{\epsilon}C_{\epsilon}^{\bigcdot}/p$, $\epsilon>0$, commutes with the functors $M\mapsto M^{\varphi=1}$ and $M\mapsto M[1/d]^{\varphi=1}$, hence
\begin{equation*}
(\Prism^{\dagger}_{S/A}[1/d]/p)^{\varphi=1}=\varinjlim_{\epsilon\rightarrow 0}(C_{\epsilon}^{\bigcdot}/p)[1/d]^{\varphi=1}\,.
\end{equation*}
Define $\Prism_{S/A,\mathrm{perf}}^{\dagger}:=C^{\bullet\dagger,\mathrm{perf}}$ using Definition \ref{0.5}. We have
\begin{equation*}
(\Prism_{S/A}^{\dagger}[1/d]/p)^{\varphi=1}\xrightarrow[\varphi]{\sim}(\Prism_{S/A}^{\dagger}/p[1/d])^{\varphi=1}
\end{equation*}
hence we get
\begin{equation*}
(\Prism_{S/A}^{\dagger}[1/d]/p^{n})^{\varphi=1}\xrightarrow{\sim}(\Prism_{S/A,\mathrm{perf}}^{\dagger}[1/d]/p^{n})^{\varphi=1}\,.
\end{equation*}
Indeed, without loss of generality $n=1$. Then
\begin{equation*}
(\Prism_{S/A}^{\dagger}[1/d]/p)^{\varphi=1}=\varinjlim_{\epsilon\rightarrow 0}(C_{\epsilon}/p[1/d])^{\varphi=1}
\end{equation*}
and 
\begin{equation*}
\Prism^{\dagger}_{S/A,\mathrm{perf}}[1/d]/p)^{\varphi=1}=\displaystyle\varinjlim_{\epsilon\rightarrow 0}((C_{\epsilon}^{\bullet}/p)^{\mathrm{perfd}}[1/d])^{\varphi=1}
\end{equation*}
where $(C_{\epsilon}^{\bullet}/p)^{\mathrm{perfd}}$ is the $d$-completed filtered colimit of $C_{\epsilon}^{\bullet}/p\xrightarrow{\varphi}C_{\epsilon}^{\bullet}/p\xrightarrow{\varphi}\cdots$. As each map acts trivially on applying $((-)[1/d])^{\varphi=1}$, \cite[Lemma 9.2]{BS22} implies that 
\begin{align*}
(\Prism_{S/A,\mathrm{perf}}^{\dagger}[1/d]/p)^{\varphi=1}
& = \varinjlim_{\epsilon\rightarrow 0}(C^{\bullet}_{\epsilon}/p[1/d])^{\varphi=1} \\
& = (\Prism^{\dagger}_{S/A}[1/d]/p)^{\varphi=1}
\end{align*}
as claimed.

Let $S$ be a dagger perfectoid with $p$-adic completion $\widehat{S}$. Consider the composite map
\begin{equation*}
\alpha\,:\,\Prism_{S/A}^{\dagger}\rightarrow\Prism_{\widehat{S}/A}\cong W(\widehat{S}^{\flat})
\end{equation*}
where the isomorphism is derived from \cite[Lemma 4.7]{BS22}. Since $(W^{\dagger}(S^{\flat})\rightarrow W^{\dagger}(S^{\flat})/d=S)$ is an object in $(\mathrm{Spf}^{\dagger}S/A)_{\Prism}$, there exists a unique map $\Prism_{S/A}^{\dagger}\rightarrow W^{\dagger}(S^{\flat})$ compatible with $\Prism_{S/A}\xrightarrow{\sim} W(S^{\flat})$. Then the analogue of \cite[Lemma 4.7]{BS22} also holds: Let $S\rightarrow B/d$ be a map, for a dagger prism $B$ with completion $\widehat{B}$ we have a commutative diagram
\begin{equation*}
\begin{tikzpicture}[descr/.style={fill=white,inner sep=1.5pt}]
        \matrix (m) [
            matrix of math nodes,
            row sep=2.5em,
            column sep=2.5em,
            text height=1.5ex, text depth=0.25ex
        ]
        { W^{\dagger}(S^{\flat}) & B \\
       W(S^{\flat}) & \widehat{B} \\};

        \path[overlay,->, font=\scriptsize] 
        (m-2-1) edge (m-2-2)
        (m-1-1) edge (m-2-1)
        (m-1-2) edge (m-2-2)
        ;
        
        \path[dashed,->, font=\scriptsize] 
        (m-1-1) edge (m-1-2)
        ;                    
                                                        
\end{tikzpicture}
\end{equation*}
where the lower horizontal arrow is induced by \cite[Lemma 4.7]{BS22} and the upper horizontal arrow is induced by the canonical map $\Prism^{\dagger}_{S/A}\rightarrow B$ which exists by the universal property of $\Prism^{\dagger}_{S/A}$. Hence $\Prism^{\dagger}_{S/A}\cong W^{\dagger}(S^{\flat})$.

In the following we reduce the proof of Theorem \ref{etale comparison} to the case that $S$ is dagger perfectoid. 

For an affine weak formal scheme $\mathrm{Spf}\,S$ with formal completion $\mathrm{Spf}\,\widehat{S}$ we have an isomorphism (Remark \ref{remark 3.2})
\begin{equation*}
F(S):=R\Gamma(\mathrm{Spec}\,S[1/p],\mathbb{Z}/p^{n})\cong R\Gamma(\mathrm{Spec}\,\widehat{S}[1/p],\mathbb{Z}/p^{n})=:F(\widehat{S})
\end{equation*}
and we write $S=\varinjlim_{\epsilon}S_{\epsilon}$ where $S_{\epsilon}$ is $p$-adically complete and $\{S_{\epsilon}\}_{\epsilon}$ defines the dagger structure on $\mathrm{Spf}\,\widehat{S}=\mathrm{Spf}\,S$. Then
\begin{equation*}
F(S)=\varinjlim_{\epsilon}R\Gamma(\mathrm{Spec}\,S_{\epsilon}[1/p],\mathbb{Z}/p^{n})=\varinjlim_{\epsilon}F(S_{\epsilon})\,.
\end{equation*}
In the proof of \cite[Theorem 9.1]{BS22} a comparison map
\begin{equation*}
F(-)\rightarrow G(-):=(\Prism_{-/A}[1/d]/p^{n})^{\varphi=1}
\end{equation*}
of $\mathrm{arc}_{p}$-sheaves is constructed on $\mathrm{fSch}_{/\mathrm{Spf}\,\mathcal{O}}$ and shown to be an isomorphism.

\begin{defn}
A homomorphism of weakly complete $\mathcal{O}$-algebras $S\rightarrow T$ is called an $\mathrm{arc}_{p}$-cover if $T=\varinjlim_{\epsilon}T_{\epsilon}$ has a dagger presentation with $p$-complete $\mathcal{O}$-algebras $T_{\epsilon}$ such that $S_{\epsilon}\rightarrow T_{\epsilon}$ is an $\mathrm{arc}_{p}$-cover in the sense of \cite[Definition 6.14]{BM21}.
\end{defn}
Note that $F$ is an $\mathrm{arc}_{p}$-sheaf by \cite[Corollary 6.17]{BM21} on $\mathrm{Spf}\,S_{\epsilon}$, hence $F(S)=\varinjlim_{\epsilon}F(S_{\epsilon})$ is an $\mathrm{arc}_{p}$-sheaf on $\mathrm{Spf}\,S$.

Now let again $C^{\bullet}$ be the \v{C}ech-Alexander complex computing $\Prism^{\dagger}_{S/A}$ and $\Prism_{S/A,\mathrm{perf}}^{\dagger}=C^{\bullet \dagger,\mathrm{perf}}$ as above. Then we have
\begin{align*}
\Prism^{\dagger}_{S/A,\mathrm{perf}}/p^{n}
& =C^{\bullet \dagger,\mathrm{perf}}/p^{n} \\
& =\varinjlim_{\epsilon\rightarrow 0}W^{\dagger}((C^{\bullet}_{\epsilon}/p)^{\mathrm{perfd}})/p^{n} \\
& =\varinjlim_{\epsilon\rightarrow 0}W_{n}((C^{\bullet}_{\epsilon}/p)^{\mathrm{perfd}}) \\
& =\varinjlim_{\epsilon\rightarrow 0}\Prism_{S_{\epsilon}/A,\mathrm{perf}}/p^{n}\,.
\end{align*}
This implies that
\begin{equation*}
(\Prism^{\dagger}_{S/A,\mathrm{perf}}[1/d]/p^{n})^{\varphi=1}=\varinjlim_{\epsilon\rightarrow 0}(\Prism_{S_{\epsilon}/A,\mathrm{perf}}[1/d]/p^{n})^{\varphi=1}\,.
\end{equation*}
Since $(\Prism_{S_{\epsilon}/A,\mathrm{perf}}[1/d]/p^{n})^{\varphi=1}$ is an $\mathrm{arc}_{p}$-sheaf on $\mathrm{Spf}\,S_{\epsilon}$, $(\Prism^{\dagger}_{S/A,\mathrm{perf}}[1/d]/p^{n})^{\varphi=1}$ is an $\mathrm{arc}_{p}$-sheaf on $\mathrm{Spf}\,S$.

One defines the $\mathrm{arc}$-topology for weak formal $p$-adic schemes by using $p$-adic weakly complete valuation rings of rank $1$ instead, and shows that there exists an $\mathrm{arc}$-cover $S\rightarrow T$ with $T$ dagger perfectoid. The proof is entirely similar to \cite[\S8]{BS22} for $p$-adic formal schemes. The proof of \cite[Proposition 8.10]{BS22} transfers to affine perfectoid dagger schemes $\mathrm{Spf}\,T$; that is, $H^{i}_{\mathrm{arc}}(\mathrm{Spf}\,T,\mathcal{O})=0$ and the structure presheaf on $\mathrm{Spf}\,T$ is a sheaf. 

In order to show that the map
\begin{equation*}
F(S)=\varinjlim_{\epsilon}F(S_{\epsilon})\rightarrow(\Prism^{\dagger}_{S/A,\mathrm{perf}}[1/d]/p^{n})^{\varphi=1}\simeq\varinjlim_{\epsilon\rightarrow 0}(\Prism_{S_{\epsilon}/A,\mathrm{perf}}[1/d]/p^{n})^{\varphi=1}
\end{equation*}
is an isomorphism, we can work $\mathrm{arc}$-locally and hence from now on we may assume that $S$ is dagger perfectoid. We claim that we have an isomorphism
\begin{equation}\label{tilt isomorphism}
R\Gamma(\mathrm{Spec}\,S[1/p],\mathbb{Z}/p^{n})\cong R\Gamma(\mathrm{Spec}\,S^{\flat}[1/d],\mathbb{Z}/p^{n})\,.
\end{equation}
Indeed, we have 
\begin{align*}
R\Gamma(\mathrm{Spec}\,S^{\flat}[1/d],\mathbb{Z}/p^{n})
& =R\Gamma(\mathrm{Spa}\,S^{\flat}[1/d],\mathbb{Z}/p^{n}) \\
& =R\Gamma(\mathrm{Spa}\,\widehat{S}^{\flat}[1/d],\mathbb{Z}/p^{n}) \\
& =R\Gamma(\mathrm{Spec}\,\widehat{S}^{\flat}[1/d],\mathbb{Z}/p^{n})
\end{align*}
and likewise for $S$. This follows from the comparison theorem of Huber/Scholze for \'{e}tale cohomology of rigid/adic spaces. The isomorphism
\begin{equation*}
R\Gamma(\mathrm{Spa}\,\widehat{S}[1/p],\mathbb{Z}/p^{n})\cong R\Gamma(\mathrm{Spa}\,\widehat{S}^{\flat}[1/d],\mathbb{Z}/p^{n})
\end{equation*}
follows from \cite[Theorem 1.11]{Sch12}. This shows that we have \eqref{tilt isomorphism}.

It remains to check 
\begin{align*}
R\Gamma(\mathrm{Spec}\,S^{\flat}[1/d],\mathbb{Z}/p^{n})
& = (\Prism^{\dagger}_{S/A}[1/d]/p^{n})^{\varphi=1} \\
& = (W_{n}(S^{\flat})[1/d])^{\varphi=1}\,.
\end{align*}
By Artin-Schreier-Witt we have
\begin{equation*}
\mathbb{Z}/p^{n}\cong (W_{n}(S^{\flat})[1/d])^{\varphi=1}
\end{equation*}
and since by the dagger analogue of \cite[Theorem 4.9, Lemma 4.10]{Sch13} we have
\begin{equation*}
W_{n}(S^{\flat})[1/d]\xrightarrow{\varphi-1}W_{n}(S^{\flat})[1/d]
\end{equation*}
is surjective, we conclude that $H^{i}(\mathrm{Spa}\,S^{\flat}[1/d],\mathbb{Z}/p^{n})$ vanishes for $i>0$. This concludes the proof of Theorem \ref{etale comparison}.
\end{proof}

\section{The tilting correspondence for perfectoid dagger algebras}\label{tilting correspondence section}
Let $K$ be a perfectoid field, with absolute value $|\,\,\,|$, ring of integers $\mathcal{O}$ in $K$, and $w$ an element with $|w|=1/p$. Consider the ring $R=K\langle T_{1}^{1/p^{\infty}},\ldots, T_{d}^{1/p^{\infty}}\rangle$ of $w$-adically converging power series, so $z=\sum_{I\in\mathbb{N}[1/p]^{d}}a_{I}\underline{T}^{I}$, $a_{I}\in K$ is in $K\langle T_{1}^{1/p^{\infty}},\ldots, T_{d}^{1/p^{\infty}}\rangle$ if and only if $|a_{I}|\rightarrow 0$. Then $K\langle T_{1}^{1/p^{\infty}},\ldots, T_{d}^{1/p^{\infty}}\rangle$ is the perfectoidisation of the Tate algebra $K\langle T_{1},\ldots, T_{d}\rangle$, equipped with the Gauss norm $\|z\|=\max_{I}\{|a_{I}|\}$. Let $R^{\circ}\subset R$ be the subring with $a_{I}\in\mathcal{O}$ and let $\lambda\in\mathcal{O}$, $|\lambda|<1$, we write $\lambda=w^{\epsilon}$, $\epsilon>0$. Then 
\begin{equation*}
\left\{z=\sum_{I\in\mathbb{N}[1/p]^{d}}a_{I}\underline{T}^{I}\,:\, \varinjlim_{I}|a_{I}||\lambda|^{-|I|}=0\right\}
\end{equation*}
is for every $\lambda=w^{\epsilon}$ a Banach algebra $R_{\epsilon}$ with Gauss norm $\max\{|a_{I}|w^{\epsilon}|^{-|I|}\}$. We write 
\begin{equation*}
R_{\epsilon}=K\langle (\lambda T_{1})^{1/p^{\infty}},\ldots, (\lambda T_{d})^{1/p^{\infty}}\rangle\,,
\end{equation*}
so we have convergence for $|\lambda T_{i}|\leq 1$. We have an isomorphism 
\begin{align*}
& R_{\epsilon}\rightarrow K\langle S_{1}^{1/p^{\infty}},\ldots, S_{d}^{1/p^{\infty}}\rangle \\
& \, \,  T_{i}\mapsto \lambda^{-1}S_{i}, \text{ (so }S_{i}=\lambda T_{i})\,.
\end{align*}
Let $|\lambda|<|\lambda'|<1$, $\lambda=w^{\epsilon}$, $\lambda'=w^{\epsilon'}$. Then we have an inclusion
\begin{equation*}
\begin{tikzpicture}[descr/.style={fill=white,inner sep=1.5pt}]
        \matrix (m) [
            matrix of math nodes,
            row sep=2.5em,
            column sep=2.5em,
            text height=1.5ex, text depth=0.25ex
        ]
        { R_{\epsilon} & R_{\epsilon'} \\
       K\langle S_{1}^{1/p^{\infty}},\ldots, S_{d}^{1/p^{\infty}}\rangle & K\langle {S'}_{1}^{1/p^{\infty}},\ldots, {S'}_{d}^{1/p^{\infty}}\rangle \\
       S_{i} & \frac{\lambda}{\lambda'}S'_{i} \\};

        \path[overlay,->, font=\scriptsize] 
        (m-1-1) edge (m-1-2)
        (m-2-1) edge (m-2-2)
        (m-1-1) edge node[right]{$\wr$} (m-2-1)
        (m-1-2) edge node[right]{$\wr$} (m-2-2)
        ;
        
        \path[overlay,|->, font=\scriptsize]
        (m-3-1) edge (m-3-2)
        ;
                                                                
\end{tikzpicture}
\end{equation*}
and we obtain a map of power bounded elements
\begin{equation*}
\mathcal{O}\langle S_{1}^{1/p^{\infty}},\ldots, S_{d}^{1/p^{\infty}}\rangle\rightarrow\mathcal{O}\langle {S'}_{1}^{1/p^{\infty}},\ldots, {S'}_{d}^{1/p^{\infty}}\rangle
\end{equation*}
inducing $R^{\circ}_{\epsilon}\rightarrow R_{\epsilon'}^{\circ}$. Define $S=\varinjlim_{\epsilon}R_{\epsilon}=\bigcup_{\epsilon}R_{\epsilon}$. We define
\begin{equation*}
S^{\circ}=\{f\in S\, :\, \forall\alpha\in\mathcal{O}, |\alpha|<1, \exists\lambda=w^{\epsilon}\text{ such that }f\in R_{\epsilon}\text{ and }\alpha f\in R^{\circ}_{\epsilon}\}\,.
\end{equation*}
Now let $K$ be a perfectoid field of characteristic $0$, $K^{\flat}$ its tilt (of characteristic $p>0$). Let $\mathcal{O}$, resp. $\mathcal{O}^{\flat}$, be the ring of integers in $K$, resp. in $K^{\flat}$. We construct perfectoid dagger algebras over $K$, resp. over $K^{\flat}$, and derive a tilting correspondence. Let $A/K^{\flat}$ be a reduced affinoid dagger algebra with integral elements $A^{+}/\mathcal{O}^{\flat}$. We fix a presentation $(K^{\flat})^{\dagger}\langle T_{1},\ldots, T_{d}\rangle\rightarrow A$ (and $(\mathcal{O}^{\flat})^{+}\langle T_{1},\ldots, T_{d}\rangle\rightarrow A^{+}$). Define Gauss norms $\gamma_{\epsilon}$ on $\widehat{A}$ via the presentation and let $A_{\epsilon}=\{a\in A\,:\, \gamma_{\epsilon}(a)>-\infty\}$. $A_{\epsilon}$ is a $\varpi$-adically complete Tate algebra over $K^{\flat}$, with integral elements $A^{+}_{\epsilon}$ over $\mathcal{O}^{\flat}$. Let $\widehat{A}_{\epsilon}^{\mathrm{perf}}$ be the $\varpi$-adically completed perfection, with integral elements $\widehat{A}_{\epsilon}^{+\mathrm{perf}}$. This is a perfectoid $K^{\flat}$-algebra with 
\begin{equation*}
\mathrm{Spa}\,(\widehat{A}_{\epsilon}^{\mathrm{perf}},\widehat{A}_{\epsilon}^{+\mathrm{perf}})=\mathrm{Spa}\,(\widehat{A}_{\epsilon},\widehat{A}_{\epsilon}^{+})
\end{equation*}
(see \cite[Proposition 6.11]{Sch12}). Define
\begin{equation*}
(A^{\mathrm{perf}},A^{+\mathrm{perf}})=\varinjlim_{\epsilon}(\widehat{A}^{\mathrm{perf}}_{\epsilon},\widehat{A}_{\epsilon}^{+\mathrm{perf}})\,.
\end{equation*} 
This is a perfectoid dagger algebra over $K^{\flat}$ with 
\begin{equation*}
\mathrm{Spa}\,(A^{\mathrm{perf}},A^{+\mathrm{perf}})=\mathrm{Spa}\,(\widehat{A},\widehat{A}^{+})\,.
\end{equation*}
The chosen presentation extends to a presentation $R_{\epsilon}\twoheadrightarrow\widehat{A}_{\mathrm{\epsilon}}^{\mathrm{perf}}$, $R^{\circ}_{\epsilon}\twoheadrightarrow\widehat{A}_{\mathrm{\epsilon}}^{\mathrm{+perf}}$, and hence $S=\varinjlim_{\epsilon}R_{\epsilon}\twoheadrightarrow A^{\mathrm{perf}}$, $S^{\circ}\twoheadrightarrow A^{+\mathrm{perf}}$. Let $(B_{\epsilon},B_{\epsilon}^{+})$ be the untilt of $(\widehat{A}^{\mathrm{perf}}_{\epsilon},\widehat{A}_{\epsilon}^{+\mathrm{perf}})$ under Scholze's tilting correspondence. Then $\mathrm{Spa}\,(B_{\epsilon},B_{\epsilon}^{+})$ is homeomorphic to its tilt $\mathrm{Spa}\,(B_{\epsilon},B_{\epsilon}^{+})^{\flat}=\mathrm{Spa}\,(\widehat{A}_{\epsilon}^{\mathrm{perf}},\widehat{A}_{\epsilon}^{+\mathrm{perf}})$ \cite[Theorem 6.3]{Sch12}.

Define $(B,B^{+})=\varinjlim_{\epsilon}(B_{\epsilon},B_{\epsilon}^{+})$. This is a perfectoid dagger algebra over $K$. Then we have the analogue of \cite[Proposition 6.17]{Sch12}:

\begin{thm}
There is an equivalence of perfectoid dagger spaces over $K$ and perfectoid dagger spaces over $K^{\flat}$, by associating to a perfectoid dagger space $X^{\dagger}/K$ its tilt $(X^{\flat})^{\dagger}/K^{\flat}$.
\end{thm}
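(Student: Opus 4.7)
The plan is to establish the equivalence in two stages: first for affinoid perfectoid dagger algebras, then globalize by gluing. The untilt functor $(A,A^{+}) \mapsto (B,B^{+})$ has been essentially constructed in the paragraphs preceding the theorem, so I would focus on producing the quasi-inverse tilt functor and verifying that the two are mutually inverse.

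First I would define the tilt of a perfectoid dagger $K$-algebra $(B, B^{+}) = \varinjlim_{\epsilon}(B_{\epsilon}, B_{\epsilon}^{+})$ by applying Scholze's tilting equivalence \cite[Proposition 6.17]{Sch12} at each level to obtain perfectoid affinoid $K^{\flat}$-algebras $(B_{\epsilon}^{\flat}, B_{\epsilon}^{+\flat})$, and setting $(A, A^{+}) := \varinjlim_{\epsilon}(B_{\epsilon}^{\flat}, B_{\epsilon}^{+\flat})$. The compatible family of Gauss norms $\gamma_{\epsilon}$ on $B$ transports through the multiplicative bijection $B^{+\flat} = \varprojlim_{x \mapsto x^{p}} B^{+}/p$ of Fontaine tilting to give a compatible family of Gauss norms on $A$, so that $A$ is a perfectoid dagger algebra over $K^{\flat}$ in the sense introduced above.

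Next, to verify that the two functors are mutually quasi-inverse, I would proceed level-by-level. For each $\epsilon$, Scholze's equivalence gives canonical identifications that untilting $(B_{\epsilon}^{\flat}, B_{\epsilon}^{+\flat})$ recovers $(B_{\epsilon}, B_{\epsilon}^{+})$, and conversely tilting recovers the original tilt. Since both functors are compatible with morphisms of perfectoid affinoid algebras, they commute with the filtered colimits over $\epsilon$, and the identifications persist after passing to the colimit. Naturality in morphisms of dagger algebras follows by refining indexing sets so that a given morphism is induced at each $\epsilon$-level. A subtlety to dispatch is the independence of the construction from the chosen presentation: two presentations of the same dagger algebra must give canonically isomorphic tilts. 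This follows from cofinality---any two compatible systems of Gauss norms admit a common refinement---together with the functoriality of Scholze's tilting applied to the transition maps.

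For the globalization to perfectoid dagger spaces, I would use that a perfectoid dagger space is covered by affinoid dagger opens $\mathrm{Spa}^{\dagger}(B, B^{+})$, that the underlying adic space $\mathrm{Spa}(\widehat{B}, \widehat{B}^{+})$ is canonically homeomorphic to $\mathrm{Spa}(\widehat{A}, \widehat{A}^{+})$ by \cite[Theorem 6.3]{Sch12} with rational subsets corresponding at each $\epsilon$-level, and that the dagger structure on a rational subset is induced by restriction of the ambient Gauss norms on either side. Consequently the affinoid equivalence descends to a well-defined functor on dagger spaces, with quasi-inverse given by the corresponding untilt construction.

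The main obstacle will be verifying that the tilt and untilt truly preserve the \emph{dagger} structure, not merely the underlying completed perfectoid affinoid algebras, and in particular that the system $\{B_{\epsilon}^{\flat}\}_{\epsilon}$ assembles into a bona fide perfectoid dagger algebra over $K^{\flat}$ satisfying the compatibility conditions of the definition. The core technical content is the precise tracking of Gauss norms through Fontaine's tilting isomorphism and showing that a morphism continuous for given Gauss norms on the tilted side corresponds to a morphism continuous for compatible Gauss norms (possibly after a rescaling $\epsilon \mapsto \phi(\epsilon)$ as in the discussion of faithfully flat covers earlier) on the untilted side.
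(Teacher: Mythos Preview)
Your proposal is correct and follows essentially the same approach as the paper: reduce to the affinoid case, apply Scholze's tilting equivalence \cite[Theorem 6.3, Proposition 6.17]{Sch12} at each $\epsilon$-level, and pass to the filtered colimit, with globalization via the homeomorphism of underlying adic spaces and compatibility on rational subsets. The paper's proof additionally makes explicit that the dagger structure presheaves $\mathcal{O}_{X}$ and $\mathcal{O}_{X^{\flat}}$ are sheaves (as direct limits of the restrictions of $\mathcal{O}_{X_{\epsilon}}$ and $\mathcal{O}_{X_{\epsilon}^{\flat}}$, invoking \cite[Proposition 6.14]{Sch12}), which you implicitly use in your gluing step but do not state; conversely, the paper is silent on the independence-of-presentation and Gauss-norm-tracking issues you flag, so your identification of these as the genuine technical content is apt.
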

\begin{proof}
The reduction to the case of an affinoid dagger space is shown in the same way as in \cite[Proposition 6.17]{Sch12}. For any affinoid perfectoid dagger algebra $(B,B^{+})$ with tilt $(A^{\mathrm{perf}},A^{+\mathrm{perf}})$, the associated presheaves $\mathcal{O}_{X}$ over $X=\mathrm{Spa}(B,B^{+})$ and $\mathcal{O}_{X^{\flat}}$ on $X^{\flat}=\mathrm{Spa}(B,B^{+})^{\flat}$, defined by taking direct limits of the restrictions of $\mathcal{O}_{X_{\epsilon}}$ on $X_{\epsilon}=\mathrm{Spa}(B_{\epsilon},B_{\epsilon}^{+})$ and $\mathcal{O}_{X_{\epsilon}^{\flat}}$ on $X^{\flat}_{\epsilon}=\mathrm{Spa}(B_{\epsilon},B_{\epsilon}^{+})^{\flat}$ to $X$ resp. $X^{\flat}$ are sheaves by \cite[Proposition 6.14]{Sch12}, and \cite[Theorem 6.3]{Sch12} holds verbatim for affinoid perfectoid dagger spaces. In particular, we have a homeomorphism
\begin{equation*}
\mathrm{Spa}(B,B^{+})\cong\mathrm{Spa}(B,B^{+})^{\flat} \ (=\mathrm{Spa}(\widehat{B},\widehat{B}^{+})=\mathrm{Spa}(\widehat{B},\widehat{B}^{+})^{\flat})
\end{equation*}
which commutes with the homeomorphism between the associated affinoid perfectoid spaces. For an affinoid perfectoid space the equivalence follows from the above construction, namely
\begin{equation*}
\mathrm{Spa}\,(B,B^{+})^{\flat}=\mathrm{Spa}\,(A^{\mathrm{perf}},A^{+\mathrm{perf}})\,.
\end{equation*}
\end{proof}

The main example is 
\begin{equation*}
\mathrm{Spa}\,(K^{\dagger}\langle T_{1}^{1/p^{\infty}},\ldots,T_{d}^{1/p^{\infty}}\rangle,\mathcal{O}^{\dagger}\langle T_{1}^{1/p^{\infty}},\ldots,T_{d}^{1/p^{\infty}}\rangle )
\end{equation*}
and its tilt
\begin{equation*}
\mathrm{Spa}\,(K^{\flat\dagger}\langle T_{1}^{1/p^{\infty}},\ldots,T_{d}^{1/p^{\infty}}\rangle,\mathcal{O}^{\flat\dagger}\langle T_{1}^{1/p^{\infty}},\ldots,T_{d}^{1/p^{\infty}}\rangle )
\end{equation*}
where $\dagger$ means $\varpi$-adic weak completion.

\section{A dagger version of $A\Omega$}

Let $(A,I)=(A_{\mathrm{inf}}(\mathcal{O}),I=(d))$. Let $\mathcal{X}/\mathrm{Spf}\,\mathcal{O}$ be a weak formal smooth scheme and let $X$ be its generic fibre, a dagger variety. Let $X_{\mathrm{pro\acute{e}t}}$ be the pro-\'{e}tale site on $X$, given locally by 
\begin{equation*}
(U_{i+1}\xrightarrow{\mathrm{f\acute{e}t}}U_{i}\xrightarrow{\mathrm{f\acute{e}t}}\cdots\xrightarrow{\mathrm{f\acute{e}t}}U_{1}\xrightarrow{\mathrm{\acute{e}t}}X)
\end{equation*}
where $U_{i}$ are affinoid dagger varieties, $U_{i+1}\rightarrow U_{i}$ is finite \'{e}tale for $i\geq 1$ and $U_{1}\rightarrow X$ is \'{e}tale. Let $U=\varprojlim_{i}\mathrm{Spec}\,U_{i}$. Let $T_{i}=\varinjlim_{\epsilon}T_{\epsilon,i}$ be the affinoid dagger algebra corresponding to $U_{i}$, with the notation of \S\ref{tilting correspondence section}. Let $T_{\epsilon,i}^{\mathrm{perf}}$ be the perfectoidisation arising from the $T_{\epsilon,i}$, with integral elements $T^{\circ,\mathrm{perf}}_{\epsilon,i}$. We have the structure sheaf $\mathcal{O}(U)=\varinjlim_{\epsilon}T_{\epsilon}^{\mathrm{perf}}$, and $\mathcal{O}^{+}(U)=\varinjlim_{\epsilon}T^{\circ,\mathrm{perf}}_{\epsilon}$. These algebras are weakly complete with respect to the $p$-adic topology. Then $(\mathcal{O}(U),\mathcal{O}^{+}(U))$ is a perfectoid dagger algebra over $\mathbb{C}_{p}$. Let $Z=\mathrm{Spa}\,(\mathcal{O}(U),\mathcal{O}^{+}(U))$ and let $Z^{\flat}=\mathrm{Spa}\,(\mathcal{O}(U^{\flat}),\mathcal{O}^{\flat+}(U))$ be its tilt. Let $v:X_{\mathrm{pro\acute{e}t}}\rightarrow\mathcal{X}_{\mathrm{Zar}}$ be the canonical map of topoi. 

We want to define a dagger version of $A\Omega_{\widehat{X}/\mathcal{O}}$ where $\widehat{X}$ is the associated formal smooth scheme and $A\Omega_{\widehat{X}/\mathcal{O}}$ is defined as in \cite{BMS18}. So we first need to give a reasonable definition of $A_{\inf,X}^{\dagger}=W^{\dagger}(\mathcal{O}_{X}^{+\flat})$. Let us first consider the case $\mathcal{X}=\mathrm{Spf}^{\dagger}R$ for $R=\mathcal{O}^{\dagger}\langle T_{1}^{\pm 1},\ldots, T_{d}^{\pm 1}\rangle$. We have an isomorphism
\begin{equation*}
A_{\inf}\langle\underline{U}^{\pm 1/p^{\infty}}\rangle\cong W(\mathcal{O}\langle\underline{T}^{\pm 1/p^{\infty}}\rangle^{\flat})
\end{equation*}
of complete $A_{\inf}$-algebras \cite[p. 70]{BMS18} with respect to the $(p,d)$-adic topology. Define the $(p,d)$-weakly complete analogue
\begin{align}
A^{\dagger}_{\inf}\langle\underline{U}^{\pm 1/p^{\infty}}\rangle
& =\varinjlim_{\epsilon\rightarrow 0}A_{\inf,\epsilon}\langle\underline{U}^{\pm 1/p^{\infty}}\rangle \\
& =\varinjlim_{\epsilon\rightarrow 0}A_{\inf}\langle(p^{\epsilon}\underline{U})^{\pm 1/p^{\infty}}\rangle \nonumber \\
& =\varinjlim_{\epsilon\rightarrow 0}W^{\dagger}(\mathcal{O}\langle(p^{\epsilon}\underline{T})^{\pm 1/p^{\infty}}\rangle^{\flat}) \nonumber \\
& =:W^{\dagger}(\mathcal{O}^{\dagger}\langle\underline{T}^{\pm 1/p^{\infty}}\rangle^{\flat}) \nonumber\,.
\end{align}
To be precise, $A_{\inf}\langle(p^{\epsilon}\underline{U})^{\pm 1/p^{\infty}}\rangle$ are the power series in $\underline{U}^{\pm 1/p^{\infty}}$ with radius of convergence $p^{\epsilon}$ in $A_{\inf}[1/p]\langle\underline{U}^{\pm 1/p^{\infty}}\rangle$. Then  $W^{\dagger}(\mathcal{O}^{\dagger}\langle\underline{T}^{\pm 1/p^{\infty}}\rangle^{\flat})$ consists of Witt vectors $(w_{0},w_{1},\ldots)$ such that there exists $\epsilon>0$ with $\inf_{i}\{i+\gamma_{\epsilon}(w_{i})/p^{i}\}>-\infty$. In particular, $w_{i}\in\mathcal{O}\langle(p^{\epsilon}\underline{T})^{\pm 1/p^{\infty}}\rangle^{\flat}$.

For $R$ \'{e}tale over $\mathcal{O}^{\dagger}\langle T_{1}^{\pm 1},\ldots, T_{d}^{\pm 1}\rangle$ we have, using a lifting $A^{\dagger}(R)$ of $R$ over $A_{\inf}$ under the map $\theta:A_{\inf}\rightarrow\mathcal{O}$, 
\begin{equation}
A^{\dagger}(R)\otimes_{A_{\inf}^{\dagger}\langle\underline{U}^{\pm 1}\rangle}A_{\inf}^{\dagger}\langle\underline{U}^{\pm 1/p^{\infty}}\rangle=A_{\inf}^{\dagger}(R_{\infty})=:W^{\dagger}((R_{\infty})^{\flat})
\end{equation}
where $(R_{\infty})^{\flat}=(R\otimes_{\mathcal{O}^{\dagger}\langle\underline{T}^{\pm 1}\rangle}\mathcal{O}^{\dagger}\langle\underline{T}^{\pm 1/p^{\infty}}\rangle)^{\flat}$. (This is a dagger version of \cite[p. 70]{BMS18}).

This construction sheafifies (see also the proof of Proposition \ref{almost zero} below), namely we have  
\begin{equation*}
W^{\dagger}(\mathcal{O}_{X}^{+\flat})=\displaystyle\varinjlim_{\epsilon\rightarrow 0}W^{\dagger}(\mathcal{O}_{X_{\epsilon}}^{+\flat})
\end{equation*}
for $X_{\epsilon}=\mathrm{Spf}\,R_{\epsilon}$ (such that $R_{\epsilon}$ is \'{e}tale over $\mathcal{O}\langle p^{\epsilon}\underline{T}^{\pm 1}\rangle$ and $R=\varinjlim_{\epsilon}R_{\epsilon}$) and global sections equal to $W^{\dagger}((R_{\infty})^{\flat})$. We define for any $X=\mathrm{Spec}\,S$ the sheaf $A^{\dagger}_{\mathrm{inf},X}$ on $X_{\mathrm{pro\acute{e}t}}$
\begin{equation*}
A^{\dagger}_{\mathrm{inf},X}:=W^{\dagger}(\mathcal{O}_{X}^{+\flat})
\end{equation*}
by local conditions using a covering by small affines and using that $A_{\inf, X}$ is a sheaf on $\widehat{X}_{\mathrm{pro\acute{e}t}}$.

\begin{defn}
We define a dagger version of $A\Omega$ as follows: Let $\mathcal{X}$ be a weak formal smooth scheme over $\mathcal{O}$. Then 
\begin{equation*}
A^{\dagger}\Omega_{\mathcal{X}/\mathcal{O}}=L\eta_{\mu}(Rv_{\ast}A^{\dagger}_{\inf, X})
\end{equation*}
where $L\eta_{\mu}$ is the d\'{e}calage functor and $\mu=[\epsilon]-1\in W(\mathcal{O}^{\flat})$.
\end{defn}

Now let $\mathcal{X}=\mathrm{Spf}^{\dagger}R$ be small affine as above. We have a local-to-global map
\begin{equation*}
A^{\dagger}\Omega^{\mathrm{pro\acute{e}t}}_{\mathcal{X}/\mathcal{O}}:=L\eta_{\mu}R\Gamma_{\mathrm{pro\acute{e}t}}(\mathcal{X},A^{\dagger}_{\inf, X})\rightarrow R\Gamma(\mathcal{X},A^{\dagger}\Omega_{\mathcal{X}/\mathcal{O}})\,.
\end{equation*}

\begin{lemma}
The local-to-global map is a quasi-isomorphism.
\end{lemma}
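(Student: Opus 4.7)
The plan is to reduce to the $p$-adically complete setting via the defining filtered colimit and invoke the corresponding local-to-global compatibility for $A\Omega$ established in BMS18. By Leray for $v\colon X_{\mathrm{pro\acute{e}t}}\to\mathcal{X}_{\mathrm{Zar}}$ we have $R\Gamma_{\mathrm{pro\acute{e}t}}(\mathcal{X},A^\dagger_{\inf,X})\simeq R\Gamma(\mathcal{X},Rv_\ast A^\dagger_{\inf,X})$, so the question is whether $L\eta_\mu$ commutes with $R\Gamma(\mathcal{X},-)$ when applied to $Rv_\ast A^\dagger_{\inf,X}$.

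First I would cover $\mathcal{X}$ by small weak formal affine opens $\mathcal{U}_i=\mathrm{Spf}^\dagger R_i$ with $R_i$ \'etale over a torus, and write each $R_i=\varinjlim_\epsilon R_{i,\epsilon}$ as in Lemma \ref{Lemma}. Since both sides satisfy Zariski descent (the left-hand side by construction, and the right-hand side because $A^\dagger\Omega_{\mathcal{X}/\mathcal{O}}$ is a sheaf), I can compute each via the \v{C}ech spectral sequence for this cover and thereby reduce to the case where $\mathcal{X}$ itself is small. For such a small $\mathcal{X}=\mathrm{Spf}^\dagger R$, the dagger analogue of the standard perfectoid-tower computation identifies
\begin{equation*}
R\Gamma_{\mathrm{pro\acute{e}t}}(\mathcal{X},A^\dagger_{\inf,X})\cong \varinjlim_\epsilon R\Gamma_{\mathrm{cont}}(\Gamma,A_{\inf}(R_{\infty,\epsilon})),
\end{equation*}
where $\Gamma=\mathbb{Z}_p(1)^d$ acts via the Kummer tower adjoining $p$-power roots of the coordinates, and the right-hand side is computed by a Koszul complex on explicit generators.

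For each fixed $\epsilon$ the assertion is exactly the $p$-adically complete case proved in BMS18, where the commutation of $L\eta_\mu$ with $R\Gamma$ rests on the Koszul-complex description of the continuous cohomology and the resulting $\mu$-regularity of its cohomology groups (controlled by $q$-binomial coefficients). Since $L\eta_\mu$ commutes with filtered colimits --- its construction as a subcomplex defined by divisibility conditions is preserved by colimits --- I can pass to $\varinjlim_\epsilon$ on both sides to obtain
\begin{equation*}
R\Gamma(\mathcal{X},A^\dagger\Omega_{\mathcal{X}/\mathcal{O}})\simeq \varinjlim_\epsilon L\eta_\mu R\Gamma_{\mathrm{cont}}(\Gamma,A_{\inf}(R_{\infty,\epsilon}))\simeq L\eta_\mu R\Gamma_{\mathrm{pro\acute{e}t}}(\mathcal{X},A^\dagger_{\inf,X}).
\end{equation*}

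The hard part will be verifying the $\mu$-regularity in the dagger setting and ensuring that the $\epsilon$-filtration can be chosen compatibly on the \v{C}ech nerve of the cover: one needs to check that the filtered colimit in $\epsilon$ does not introduce spurious $\mu$-torsion obstructing the commutation of $L\eta_\mu$ with $R\Gamma(\mathcal{X},-)$ beyond its commutation with the colimit in $\epsilon$ itself. This should follow because the transition maps $R_{i,\epsilon}\hookrightarrow R_{i,\epsilon'}$ are flat, so $\mu$-regularity of the relevant cohomology at each finite level passes to the colimit; and the compatible choice of Gauss norms on intersections is available via the reindexing trick $\delta=\phi(\epsilon)$ already used in the proof that $\mathcal{O}^\dagger_{\Prism}$ is a sheaf.
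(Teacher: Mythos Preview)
Your approach is essentially the same as the paper's: write both sides as filtered colimits over $\epsilon$, use that $L\eta_\mu$ commutes with filtered colimits, and invoke the complete case from \cite[Proposition 9.14]{BMS18} at each fixed $\epsilon$. The paper's proof is literally a three-line chain of isomorphisms doing exactly this.

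Two places where you overcomplicate matters. First, the lemma is stated only for $\mathcal{X}=\mathrm{Spf}^\dagger R$ small affine, so your initial \v{C}ech reduction to small opens is unnecessary. Second, the concerns in your last paragraph are unfounded: there is no separate ``dagger $\mu$-regularity'' to verify. Once you grant that $A^\dagger_{\inf,X}=\varinjlim_\epsilon W^\dagger(\mathcal{O}^{+\flat}_{X_\epsilon})$ and that $R\Gamma_{\mathrm{pro\acute{e}t}}$ and $R\Gamma(\mathcal{X},-)$ commute with this filtered colimit, the commutation of $L\eta_\mu$ with $R\Gamma$ is needed only at each fixed $\epsilon$, where it is exactly \cite[Proposition 9.14]{BMS18}; the colimit step is then just the commutation of $L\eta_\mu$ with filtered colimits, which you already state. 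You do not need to reprove or transport the Koszul-complex $\mu$-torsion analysis to the dagger setting, nor do you need any flatness of the transition maps.
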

\begin{proof}
Since $L\eta_{\mu}$ commutes with direct limits, we have:
\begin{align*}
A^{\dagger}\Omega^{\mathrm{pro\acute{e}t}}_{\mathcal{X}/\mathcal{O}}
& \cong\varinjlim_{\epsilon}L\eta_{\mu}R\Gamma_{\mathrm{pro\acute{e}t}}(\mathcal{X}_{\epsilon},A^{\dagger}_{\inf, X_{\epsilon}}) \\
& \cong\varinjlim_{\epsilon}R\Gamma(\mathcal{X}_{\epsilon},A^{\dagger}\Omega_{\mathcal{X}_{\epsilon}/\mathcal{O}}) \\
& \cong R\Gamma(\mathcal{X},A^{\dagger}\Omega_{\mathcal{X}/\mathcal{O}})
\end{align*}
where the middle quasi-isomorphism follows from \cite[Proposition 9.14]{BMS18}.
\end{proof}

Our main theorem for comparing overconvergent prismatic cohomology with $A^{\dagger}\Omega$ can be formulated as follows:

\begin{thm}\label{main theorem}
Let $\mathcal{X}$ be weak formal smooth over $\mathrm{Spf}\,\mathcal{O}$. Then we have a $\varphi$-equivariant quasi-isomorphism
\begin{equation*}
R\Gamma(\mathcal{X},A^{\dagger}\Omega_{\mathcal{X}/\mathcal{O}})\otimes^{L\dagger}_{A_{\inf}}B_{\mathrm{cris}}^{+}\cong\varphi^{\ast}\left(R\Gamma((\mathcal{X}/A_{\inf})_{\Prism},\mathcal{O}_{\Prism}^{\dagger})\right)\otimes^{L\dagger}_{A_{\inf}}B_{\mathrm{cris}}^{+}
\end{equation*}
where $B_{\mathrm{cris}}^{+}$ is Fontaine's ring.
\end{thm}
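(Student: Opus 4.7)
The plan is to reduce to the small affine case $\mathcal{X}=\mathrm{Spf}^{\dagger}R$, where $R$ is étale over $\mathcal{O}^{\dagger}\langle T_1^{\pm 1},\ldots,T_d^{\pm 1}\rangle$, and then to compute both sides simultaneously using a perfectoid dagger pro-étale cover. First I would invoke the preceding lemma to replace $R\Gamma(\mathcal{X},A^{\dagger}\Omega_{\mathcal{X}/\mathcal{O}})$ by $L\eta_{\mu}R\Gamma_{\mathrm{pro\acute{e}t}}(\mathcal{X},A^{\dagger}_{\inf,X})$, and similarly use the Čech-Alexander construction from \S0 to compute overconvergent prismatic cohomology. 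Write $R=\varinjlim_{\epsilon}R_{\epsilon}$ as in Lemma \ref{Lemma}; since $L\eta_{\mu}$, the overconvergent Čech-Alexander formalism, and the $B_{\mathrm{cris}}^{+}$-base change all commute with the filtered colimit in $\epsilon$, the theorem reduces at each $\epsilon$ to the complete case of \cite[Theorem 1.8(3)]{BS22} / \cite[Theorem 14.1]{BMS18}, provided the two sides can be matched uniformly in $\epsilon$.

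The key local calculation is to produce a pro-étale cover $\widetilde{R}$ of $R$ by the perfectoid dagger algebra $R_{\infty}=R\otimes_{\mathcal{O}^{\dagger}\langle\underline{T}^{\pm 1}\rangle}\mathcal{O}^{\dagger}\langle\underline{T}^{\pm 1/p^{\infty}}\rangle$ with Galois group $\Delta\cong\mathbb{Z}_{p}^{d}$. On this cover the pro-étale sheaf $A^{\dagger}_{\inf,X}$ takes the value $W^{\dagger}(R_{\infty}^{\flat})$, so $R\Gamma_{\mathrm{pro\acute{e}t}}(\mathcal{X},A^{\dagger}_{\inf,X})\cong R\Gamma_{\mathrm{cont}}(\Delta,W^{\dagger}(R_{\infty}^{\flat}))$, computed by the overconvergent Koszul complex for the $d$ commuting endomorphisms $\gamma_i-1$. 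Simultaneously, using Proposition \ref{equivalence of cats} together with the universal property of $\Prism^{\dagger}_{R/A_{\inf}}$, the \v{C}ech-nerve of the cover $A^{\dagger}_{\inf}(R)\rightarrow W^{\dagger}(R_{\infty}^{\flat})$ in $(R/A_{\inf})^{\dagger}_{\Prism}$ computes $\Prism^{\dagger}_{R/A_{\inf}}$. Since the corresponding $\Delta$-action on $W^{\dagger}(R_{\infty}^{\flat})$ is compatible with the one on $A^{\dagger}_{\inf,X}$, one obtains a canonical map from the pro-étale side into the prismatic side.

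After applying $L\eta_{\mu}$ to the pro-étale Koszul complex, the standard identity $L\eta_{\mu}\mathrm{Kos}(\gamma_i-1)\simeq\mathrm{Kos}(\varphi\varphi_i)$ (the $q$-de Rham replacement of the $\gamma_i-1$ by $[p]_q$-twisted operators) identifies the result with the Frobenius twist of an overconvergent $q$-de Rham complex. This in turn matches $\varphi^{\ast}\Prism^{\dagger}_{R/A_{\inf}}$ via Theorem \ref{q-de Rham intro}, at least after inverting suitable elements. The point of base-changing to $B_{\mathrm{cris}}^{+}$ is to invert $\mu$ (equivalently, $\xi$ and $[p]_q$) so that the comparison becomes a genuine quasi-isomorphism of complexes, exactly as in \cite[\S14]{BMS18}; the completed tensor product $\otimes^{L\dagger}_{A_{\inf}}B_{\mathrm{cris}}^{+}$ is what makes this legitimate in the overconvergent setting. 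The $\varphi$-equivariance is built into all constructions, as $W^{\dagger}$, the Koszul complex, $L\eta_{\mu}$, and the Čech-Alexander complex all carry compatible Frobenius lifts.

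The main obstacle is controlling the interaction between $L\eta_{\mu}$ and the overconvergent structure: $L\eta_{\mu}$ is only well-behaved on $\mu$-torsion-free or $d$-completely flat complexes, and it is not obvious that the weakly complete Witt vectors $W^{\dagger}(R_{\infty}^{\flat})$ have sufficiently nice torsion-freeness properties at each truncation level $R_{\epsilon}$. This will require an almost-purity style statement for overconvergent Witt vectors on perfectoid dagger algebras (as already used in the étale comparison, Theorem \ref{etale comparison}), to guarantee that at each finite $\epsilon$ the cohomology is almost concentrated in the expected degrees, so that $L\eta_{\mu}$ kills the almost-zero torsion and commutes with the limit in $\epsilon$. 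Once this is established, inverting the relevant elements to pass to $B_{\mathrm{cris}}^{+}$ removes all remaining almost ambiguity and yields the stated $\varphi$-equivariant quasi-isomorphism.
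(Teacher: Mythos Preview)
Your overall architecture is close to the paper's: reduce to small affine $\mathcal{X}=\mathrm{Spf}^{\dagger}R$, compute the pro-\'etale side via the Galois cover $R_{\infty}$ and continuous group cohomology, apply $L\eta_{\mu}$ to obtain a $q$-dagger de Rham complex, and compare with the \v{C}ech--Alexander model for $\Prism^{\dagger}_{R/A_{\inf}}$. You also correctly flag the almost purity for $W^{\dagger}$ on perfectoid dagger algebras as the main technical input; the paper devotes most of the proof (Proposition~\ref{almost zero}, Proposition~\ref{almost finite etale}, Lemma~\ref{integral lemma}) to exactly this, and then uses Bhatt's criterion (Proposition~\ref{Bhatt prop}) to upgrade the resulting almost quasi-isomorphism to a genuine one after $L\eta_{\mu}$.

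Two points in your sketch do not match the paper and would need repair. First, your proposed reduction ``at each $\epsilon$ to the complete case of \cite[Theorem~1.8(3)]{BS22}'' does not work: the rings $R_{\epsilon}$ are $(p,d)$-complete but carry no Frobenius lift, hence are not prisms (see the remark after Lemma~\ref{Lemma}), so there is no prismatic cohomology of $R_{\epsilon}$ to compare to. The paper never reduces to a complete statement level-by-level; it works directly with the dagger objects. Second, on the prismatic side you propose to compute $\Prism^{\dagger}_{R/A_{\inf}}$ by the \v{C}ech nerve of $A^{\dagger}_{\inf}(R)\to W^{\dagger}(R_{\infty}^{\flat})$. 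The paper does not do this; it uses the \v{C}ech--Alexander complex $C_{\bullet}$ built from free $\delta$-algebras, identifies $\varphi^{\ast}C_{n}$ with a weakly completed $[p]_{q}$-PD-polynomial algebra over $\varphi^{\ast}C_{0}$, proves a $q$-PD Poincar\'e lemma so that $q\Omega^{\dagger\bullet}_{\varphi^{\ast}C_{n}/A}\simeq q\Omega^{\dagger\bullet}_{\varphi^{\ast}C_{0}/A}$, and then runs the double-complex argument of \cite{BdJ11} exactly as in the Monsky--Washnitzer comparison. The role of $B_{\mathrm{cris}}^{+}$ is also more specific than ``inverting $\mu$'': it is used via \cite[Corollary~12.4]{BMS18} to identify the $q$-dagger de Rham complex with the ordinary dagger de Rham complex, which is what makes the column-wise quasi-isomorphisms in that double complex available. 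Finally, the paper inserts an extra step you omit: the $\Gamma$-equivariant splitting $A^{\dagger}_{\inf}(R_{\infty})=A(R)^{\square}\oplus A^{\dagger}_{\inf}(R_{\infty})^{\mathrm{nonint}}$, with the non-integral summand having cohomology killed by $\mu$ so that $L\eta_{\mu}$ annihilates it and leaves precisely $q\Omega^{\dagger\bullet}_{A(R)^{\square}/A_{\inf}}$.
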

\begin{proof}
It suffices to show the analogous statement for $A^{\dagger}\Omega^{\mathrm{pro\acute{e}t}}_{\mathcal{X}/\mathcal{O}}$ where $\mathcal{X}=\mathrm{Spf}^{\dagger}R$ is small affine. We view $A_{\inf}(\mathcal{O})=W(\mathcal{O}^{\flat})$ as a perfect $\mathbb{Z}\llbracket q-1\rrbracket$-algebra, for $q=[\epsilon]$, $\epsilon=(1,\zeta_{p},\zeta_{p^{2}},\ldots)\in\mathcal{O}^{\flat}$. The outline of the proof is as follows: We relate $A^{\dagger}\Omega^{\mathrm{pro\acute{e}t}}_{\mathcal{X}/\mathcal{O}}$ to group cohomology, where the group is $\Gamma=\mathbb{Z}_{p}(1)^{d}$, the Galois group of the pro-\'{e}tale extension $\mathcal{U}/U$. The group cohomology can be computed by a dagger Koszul complex, which can be related to a dagger $q$-de Rham complex that computes overconvergent prismatic cohomology.

To be precise, let $R$ be small \'{e}tale over $\mathcal{O}^{\dagger}\langle T_{1}^{\pm 1},\ldots, T_{d}^{\pm 1}\rangle$ and $R_{i}=R\otimes_{\mathcal{O}^{\dagger}\langle\underline{T}^{\pm 1}\rangle}\mathcal{O}^{\dagger}\langle\underline{T}^{\pm 1/p^{i}}\rangle$. Then $\mathcal{U}=``\varprojlim"\mathrm{Spec}\,R_{i}[1/p]$ is a Galois cover of $U=\mathrm{Spec}\,R[1/p]$. We have an action of $\mu_{p^{i}}^{d}$ on $R_{i}[1/p]$ in the usual way: $\underline{\xi}=(\xi_{1},\ldots,\xi_{d})\in\mu_{p^{i}}^{d}$ acts via $\underline{\xi}T_{1}^{i_{1}/p^{i}}\cdots T_{d}^{i_{d}/p^{i}}=\xi_{1}^{i_{1}}\cdots\xi_{d}^{i_{d}}T_{1}^{i_{1}/p^{i}}\cdots T_{d}^{i_{d}/p^{i}}$. We get a Cartan-Leray spectral sequence
\begin{equation*}
H^{n}_{\mathrm{cont}}(\mu_{p^{i}}^{d},H^{l}_{\mathrm{pro\acute{e}t}}(U_{i},W^{\dagger}(\mathcal{O}^{+\flat}_{X}))\Rightarrow H^{n+l}_{\mathrm{pro\acute{e}t}}(U,W^{\dagger}(\mathcal{O}^{+\flat}_{X}))
\end{equation*}
for $U_{i}=\mathrm{Spa}\,R[1/p]$ and after taking limits over $i$ yielding the derived version
\begin{equation*}
R\Gamma_{\mathrm{gp}}(\mathbb{Z}_{p}(1)^{d},R\Gamma_{\mathrm{pro\acute{e}t}}(\mathcal{U},W^{\dagger}(\mathcal{O}^{+\flat}_{X}))\simeq R\Gamma_{\mathrm{pro\acute{e}t}}(U,W^{\dagger}(\mathcal{O}^{+\flat}_{X}))\,.
\end{equation*}

We have the following proposition, which is an almost purity version for overconvergent Witt vectors:

\begin{prop}\label{almost zero}
\begin{itemize}
\item[-] $H^{i}_{\mathrm{pro\acute{e}t}}(\mathcal{U},W^{\dagger}(\mathcal{O}^{+\flat}_{X}))$ is almost zero, killed by $W(\mathfrak{m}^{\flat})$, for $i>0$.
\item[-] $H^{0}_{\mathrm{pro\acute{e}t}}(\mathcal{U},W^{\dagger}(\mathcal{O}^{+\flat}_{X}))=W^{\dagger}((R_{\infty})^{\flat})$ where $(R_{\infty})^{\flat}$ is the dagger tilt of the perfectoid dagger algebra $R_{\infty}$.
\end{itemize}
\end{prop}
\begin{proof}
As before, let $W^{\dagger}(\mathcal{O}_{X}^{+\flat})=\displaystyle\varinjlim_{\epsilon\rightarrow 0}W^{\dagger}(\mathcal{O}_{X_{\epsilon}}^{+\flat})$ for $\mathcal{X}_{\epsilon}=\mathrm{Spf}\,R_{\epsilon}$. By \cite[4.10, 5.11]{Sch13} we have $H^{i}_{\mathrm{pro\acute{e}t}}(\mathcal{U},W(\mathcal{O}^{+\flat}_{X_{\epsilon}}))$ is almost zero for $i>0$ and $H^{0}_{\mathrm{pro\acute{e}t}}(\mathcal{U},W(\mathcal{O}^{+\flat}_{X_{\epsilon}}))=W(R_{\infty,\epsilon}^{\flat})$. We need the same assertions for the overconvergent Witt vectors. Now let $S^{\circ}=R_{\infty,\epsilon}^{\flat}$ be as above, which is a perfectoid algebra over $\mathcal{O}_{K^{\flat}}$ and $S=R_{\infty,\epsilon}[1/p]^{\flat}$ which is a perfectoid $K^{\flat}$-algebra. Let $S'/S$ be finite \'{e}tale of degree $n$, then $S'$ is a perfectoid $K^{\flat}$-algebra and $S'^{\circ}$, the integral closure of $S^{\circ}$ in $S'$, is almost finite \'{e}tale over $S^{\circ}$ by \cite[Proposition 5.23]{Sch12}. By \cite[Corollary 2.46]{DLZ12} $W^{\dagger}(S')$ is finite \'{e}tale over $W^{\dagger}(S)$ of degree $n$. The proof in \cite{DLZ12}, which is given for a finite \'{e}tale extension of a finitely generated algebra over a perfect field, easily transfers to perfectoid algebras. Under different assumptions the result is also proved in \cite{DK15}. 

Let $X_{\epsilon}^{\flat}=\mathrm{Spa}(\mathcal{O}_{X_{\epsilon}}^{\flat},\mathcal{O}_{X_{\epsilon}}^{+\flat})$. We will show that $W^{\dagger}(\mathcal{O}_{X_{\epsilon}}^{\flat})$ is a sheaf on $X^{\flat}_{\epsilon}$ for the topology generated by rational subdomains. Moreover, for any \'{e}tale covering $\{U_{\epsilon,i}\}$ of $X_{\epsilon}^{\flat}$, where $U_{\epsilon,i}$ is a rational subdomain of a finite \'{e}tale map to a rational subdomain of $X_{\epsilon}^{\flat}$, using \'{e}tale acyclicity of the sheaf $\mathcal{O}_{X_{\epsilon}}^{\flat}$ (\cite[Proposition 7.13]{Sch12}), the total complex associated to the simplicial complex
\begin{equation}\label{simplicial}
0\rightarrow\varinjlim_{\epsilon}W^{\dagger}(\mathcal{O}(X_{\epsilon}^{\flat}))\rightarrow\prod_{i}\varinjlim_{\epsilon}W^{\dagger}(\mathcal{O}(U_{\epsilon,i}))\rightrightarrows\prod_{i,j}\varinjlim_{\epsilon}W^{\dagger}(\mathcal{O}(U_{\epsilon,i}\times U_{\epsilon,j}))\mathrel{\substack{\textstyle\rightarrow\\[-0.4ex]
                      \textstyle\rightarrow \\[-0.4ex]
                      \textstyle\rightarrow}}\cdots
\end{equation}
is exact. Indeed, let $S_{\epsilon}=(R_{\infty,\epsilon}[1/p])^{\flat}$, a perfectoid $K^{\flat}$-algebra, and let $S'_{\epsilon}$ be finite \'{e}tale, again a perfectoid $K^{\flat}$-algebra. We may assume that there are surjections (with the notation of \S\ref{tilting correspondence section})
\begin{equation*}
\rho_{\epsilon}:K^{\flat}\langle w^{\epsilon}T_{1}^{1/p^{\infty}},\ldots, w^{\epsilon}T_{d}^{1/p^{\infty}}\rangle\twoheadrightarrow S_{\epsilon}
\end{equation*}
and likewise for $S'_{\epsilon}$. Define finite Gauss norms $\gamma_{S_{\epsilon}}$ on $S_{\epsilon}$, resp. $\gamma_{S'_{\epsilon}}$ on $S'_{\epsilon}$, as follows: let $w^{\epsilon}t_{i}$ be the image of $w^{\epsilon}T_{i}$ in $S_{\epsilon}$ and define $\gamma^{\rho_{\epsilon}}(z)=\inf_{I}(v(a_{I})-\epsilon|k_{I}|)>-\infty$ for $z=\sum_{k_{I}\in\mathbb{N}[1/p]^{d}}a_{I}(w^{\epsilon}t_{I})^{k_{I}}\in S_{\epsilon}$ and $|k_{I}|=\sum_{i}k_{i}$, and then $\gamma_{S_{\epsilon}}(z):=\sup_{\rho_{\epsilon}}\gamma^{\rho_{\epsilon}}(z)$, and likewise for $S'_{\epsilon}$. Now let $U_{\epsilon}$ be a rational subdomain of $\mathrm{Spa}\,S'_{\epsilon}$, hence
\begin{align*}
U_{\epsilon}
& =\mathrm{Spa}\,S'_{\epsilon}\langle (w^{\epsilon}Y_{1})^{1/p^{\infty}},\ldots, (w^{\epsilon}Y_{s})^{1/p^{\infty}}\rangle/\langle f_{i}-gY_{i}\rangle \\
& =\mathrm{Spa}\,S'_{\epsilon}\left\langle \frac{(w^{\epsilon}f_{1})^{1/p^{\infty}},\ldots, (w^{\epsilon}f_{s})^{1/p^{\infty}}}{g^{1/p^{\infty}}}\right\rangle \\
& =\mathrm{Spa}\,S'_{U_{\epsilon}}\,.
\end{align*}
Define the Gauss norms $\gamma_{S'_{U_{\epsilon}}}$ by adding a degree valuation for each new variable $Y_{i}$. It then follows from the techniques in \cite[\S1]{DLZ12} that the restriction of $\gamma_{S'_{U_{\epsilon}}}$ to $S_{\epsilon}$ is linearly equivalent to $\gamma_{S_{\epsilon}}$, that is, there exists $c>0$ such that 
\begin{equation*}
\gamma_{S'_{U_{\epsilon}}}(z)\geq\gamma_{S_{\epsilon}}(z)\geq c\gamma_{S'_{U_{\epsilon}}}(z)
\end{equation*}
for all $z\in S_{\epsilon}$. We may assume that $\gamma_{S'_{U_{\epsilon}}}(z)$ is negative, so that $c>1$. Then $\frac{1}{c}\gamma_{S_{\epsilon}}(z)\geq\gamma_{S'_{U_{\epsilon}}}(z)$, but from the definitions we have the inequality $\gamma_{S_{\epsilon/c}}(z)\geq \frac{1}{c}\gamma_{S_{\epsilon}}(z)$ and hence $\gamma_{S_{\epsilon/c}}(z)\geq\gamma_{S'_{U_{\epsilon}}}(z)$ for all $z\in S_{\epsilon}$. Now let $(z_0,z_1,\ldots)\in W(S_{\epsilon})$ be a Witt vector such that its image lies in $W^{\dagger}(S'_{U_{\epsilon}})$. Then $\varinjlim_{i}\left(i+\frac{\gamma_{S'_{U_{\epsilon}}}(z_{i})}{p^{i}}\right)=\infty$, hence $(z_0,z_1,\ldots)\in W^{\dagger}(S_{\epsilon/c})$. To show acyclicity in higher degrees it suffices to consider the two cases
\begin{enumerate}[i)]
\item $U_{\epsilon}=\mathrm{Spa}\,S'_{\epsilon}$ finite \'{e}tale faithfully flat over $\mathrm{Spa}\,S_{\epsilon}$. Then the complex \eqref{simplicial} associated to the acyclic complex
\begin{equation*}
0\rightarrow S_{\epsilon}\rightarrow S'_{\epsilon}\rightrightarrows S'_{\epsilon}\otimes_{S_{\epsilon}}S'_{\epsilon}\mathrel{\substack{\textstyle\rightarrow\\[-0.4ex]
                      \textstyle\rightarrow \\[-0.4ex]
                      \textstyle\rightarrow}}S'_{\epsilon}\otimes_{S_{\epsilon}}S'_{\epsilon}\otimes_{S_{\epsilon}}S'_{\epsilon}\mathrel{\substack{\textstyle\rightarrow\\[-0.4ex]
                      \textstyle\rightarrow \\[-0.4ex]
                      \textstyle\rightarrow\\[-0.4ex]
                      \textstyle\rightarrow}}\cdots
\end{equation*}
is acyclic because $W^{\dagger}(S'_{\epsilon})$ is finite \'{e}tale and faithfully flat over $W^{\dagger}(S_{\epsilon})$.

\item $\{U_{\epsilon,i}\}$ is a covering by rational subdomains of $\mathrm{Spa}\,S_{\epsilon}$. Then by using the same reduction argument as in \cite[Chapter 8.2]{BGR84}, it suffices to consider Laurent coverings and then by an induction argument the case that $\mathrm{Spa}\,S_{\epsilon}$ is covered by $\mathrm{Spa}\,S_{\epsilon}\langle w^{\epsilon} f\rangle$ and $\mathrm{Spa}\,S_{\epsilon}\langle w^{\epsilon} f^{-1}\rangle$ for some $f\in S_{\epsilon}$. In this case \eqref{simplicial} becomes 
\begin{align}\label{simplicial new}
0\rightarrow\varinjlim_{\epsilon\rightarrow 0}W^{\dagger}(S_{\epsilon}))\rightarrow
& \varinjlim_{\epsilon}(W^{\dagger}(S_{\epsilon}\langle (w^{\epsilon} f)^{1/p^{\infty}}\rangle\times W^{\dagger}(S_{\epsilon}\langle (w^{\epsilon} f^{-1})^{1/p^{\infty}}\rangle) \\
& \rightarrow\varinjlim_{\epsilon}W^{\dagger}(S_{\epsilon}\langle (w^{\epsilon} f)^{1/p^{\infty}},(w^{\epsilon}f^{-1})^{1/p^{\infty}}\rangle)\rightarrow 0\,.\nonumber
\end{align}
\end{enumerate}
Let $S^{\dagger}\langle f^{1/p^{\infty}}\rangle=\varinjlim_{\epsilon}S_{\epsilon}\langle (w^{\epsilon}f)^{1/p^{\infty}}\rangle$ be the corresponding perfectoid dagger algebra. Then 
\begin{align*}
W^{\dagger}(S_{\epsilon}\langle (w^{\epsilon}f)^{1/p^{\infty}}\rangle)=\{z=(z_{0},z_{1},\ldots)
& \in W(S^{\dagger}\langle f^{1/p^{\infty}}\rangle)\,|\,\forall i:\, z_{i}\in S_{\epsilon}\langle (w^{\epsilon}f)^{1/p^{\infty}} \\
& \text{ and }\varinjlim_{i}\left(\frac{\gamma_{S_{\epsilon}\langle (w^{\epsilon f)^{1/p^{\infty}}}}(z_{i})}{p^{i}}+i\right)=\infty\}\,.
\end{align*}
Likewise, $W^{\dagger}(S_{\epsilon}\langle (w^{\epsilon}f^{-1})^{1/p^{\infty}}\rangle)$ and $W^{\dagger}(S_{\epsilon}\langle (w^{\epsilon}f)^{1/p^{\infty}},(w^{\epsilon}f^{-1})^{1/p^{\infty}}\rangle)$ are defined.

Let $\widehat{S}\langle f^{1/{p^{\infty}}}\rangle$ be the $w$-adic completion of $S^{\dagger}\langle f^{1/{p^{\infty}}}\rangle$. Under the isomorphism
\begin{equation*}
W(\widehat{S}\langle f^{1/{p^{\infty}}}\rangle)=W(\widehat{S})\langle [f]^{1/p^{\infty}}\rangle
\end{equation*}
(where $\langle\,\cdot\,\rangle$ denotes $p$-$d$-adic completion), let 
\begin{equation*}
R(f):=W^{\dagger}(S_{\epsilon})\langle [w^{\epsilon}f]^{1/p^{\infty}}\rangle
\end{equation*}
be the image of $W^{\dagger}(S_{\epsilon}\langle [w^{\epsilon}f]^{1/p^{\infty}}\rangle)$. Likewise, for a variable $\eta$, under the isomorphism 
\begin{equation*}
W(\widehat{S}\langle \eta^{1/{p^{\infty}}}\rangle)=W(\widehat{S})\langle [\eta]^{1/p^{\infty}}\rangle
\end{equation*}
let 
\begin{equation*}
R(\eta):=W^{\dagger}(S_{\epsilon})\langle [w^{\epsilon}\eta]^{1/p^{\infty}}\rangle
\end{equation*}
be the image of $W^{\dagger}(S_{\epsilon}\langle [w^{\epsilon}\eta]^{1/p^{\infty}}\rangle)$. Similarly for variables $\xi,\eta$ 
\begin{equation*}
R(\xi,\eta):=W^{\dagger}(S_{\epsilon})\langle [w^{\epsilon}\xi]^{1/p^{\infty}},[w^{\epsilon}\eta]^{1/p^{\infty}}\rangle
\end{equation*}
and
\begin{equation*}
R(\xi,\xi^{-1}):=W^{\dagger}(S_{\epsilon})\langle [w^{\epsilon}\xi]^{1/p^{\infty}},[w^{\epsilon}\xi^{-1}]^{1/p^{\infty}}\rangle
\end{equation*}
are defined.

Now consider the following commutative diagram
\begin{equation}
\label{new diagram}
\begin{tikzpicture}[descr/.style={fill=white,inner sep=1.5pt}]
        \matrix (m) [
            matrix of math nodes,
            row sep=2.5em,
            column sep=2.5em,
            text height=1.5ex, text depth=0.25ex
        ]
        { \ & \ & 0 & 0 & \ \\
        \ & \ & I(\xi)\times I(\eta) & I(\xi)_{R(\xi,\xi^{-1})} & \ \\
        0 & W^{\dagger}(S_{\epsilon}) & R(\xi)\times R(\eta) & R(\xi,\xi^{-1}) & 0 \\
        0 & W^{\dagger}(S_{\epsilon}) & R(f)\times R(f^{-1}) & R(f,f^{-1}) & 0 \\
        \ & \ & 0 & 0 & \ \\};

        \path[overlay,->, font=\scriptsize]         
        (m-2-3) edge node[above]{$\lambda'$}(m-2-4)
        (m-3-1) edge (m-3-2)
        (m-3-2) edge (m-3-3)        
        (m-3-3) edge node[above]{$\lambda$}(m-3-4)
        (m-3-4) edge (m-3-5) 
        (m-4-1) edge (m-4-2)
        (m-4-2) edge (m-4-3)        
        (m-4-3) edge (m-4-4)
        (m-4-4) edge (m-4-5) 
        (m-1-3) edge (m-2-3)         
        (m-1-4) edge (m-2-4) 
        (m-2-3) edge (m-3-3)
        (m-2-4) edge (m-3-4)
        (m-3-2) edge node[right]{$=$} (m-4-2) 
        (m-3-3) edge (m-4-3)
        (m-3-4) edge (m-4-4) 
        (m-4-3) edge (m-5-3)
        (m-4-4) edge (m-5-4)        
        ;
                                                
\end{tikzpicture}
\end{equation}
$\lambda$ is the map $(h_{1}(w^{\epsilon}\xi),h_{2}(w^{\epsilon}\eta))\mapsto(h_{1}(w^{\epsilon}\xi)-h_{2}(w^{\epsilon}\xi^{-1}))$, $\lambda'$ is induced by $\lambda$, the vertical maps are the canonical ones given by $[w^{\epsilon}\xi]^{k}\mapsto[w^{\epsilon}f]^{k}$ and $[w^{\epsilon}\eta]^{k}\mapsto[w^{\epsilon}f^{-1}]^{k}$ for $k\in\mathbb{Z}_{\geq 0}[1/p]$. $I(\xi)$ denotes the $(p,d)$-completed ideal in $R(\xi)$ generated by $[w^{\epsilon}\xi]^{k}-[w^{\epsilon}f]^{k}$, $k\in\mathbb{Z}_{\geq 0}[1/p]$ and, likewise, $I(\eta)$ is the $(p,d)$-completed ideal in $R(\eta)$ generated by $1-[f]^{k}[\eta]^{k}$, $k\in\mathbb{Z}_{\geq 0}[1/p]$. It is then clear that the second column is exact.

Now
\begin{equation*}
R(f,f^{-1})=R(\xi,\eta)/\langle ([w^{\epsilon}\xi]^{k})-[w^{\epsilon}f]^{k}),(1-[f]^{k}[\eta]^{k})\rangle_{k\in\mathbb{Z}_{\geq 0}[1/p]}
\end{equation*}
and since the ideal
\begin{equation*}
\langle ([w^{\epsilon}\xi]^{k})-[w^{\epsilon}f]^{k}),(1-[f]^{k}[\eta]^{k})\rangle_{k\in\mathbb{Z}_{\geq 0}[1/p]}
\end{equation*}
coincides with the ideal 
\begin{equation*}
\langle ([w^{\epsilon}\xi]^{k})-[w^{\epsilon}f]^{k}),(1-[\xi]^{k}[\eta]^{k})\rangle_{k\in\mathbb{Z}_{\geq 0}[1/p]}
\end{equation*}
we obtain
\begin{equation*}
R(f,f^{-1})=R(\xi,\xi^{-1})/I(\xi)_{R(\xi,\xi^{-1})}
\end{equation*}
and hence the third column is exact too. (Note that $I(\xi)_{R(\xi,\xi^{-1})}$ is the $(p,d)$-completed ideal in $R(\xi,\xi^{-1})$ generated by $[w^{\epsilon}\xi]^{k}-[w^{\epsilon}f]^{k}$ for $k\in\mathbb{Z}_{\geq 0}[1/p]$.

The equations
\begin{equation}\label{plus}
R(\xi,\xi^{-1})=R(\xi)+\langle w^{\epsilon},\xi^{-1}\rangle^{1/p^{\infty}}R(\xi^{-1})
\end{equation}
and
\begin{equation*}
I(\xi)_{R(\xi,\xi^{-1})}=I(\xi)_{R(\xi)}+\langle 1-[f]^{k}[\xi^{-1}]^{k}\rangle_{k\in\mathbb{Z}_{\geq 0}[1/p]}R(\xi^{-1})
\end{equation*}
show the surjectivity of $\lambda$ and $\lambda'$, and the exactness of the first row in \eqref{new diagram}. The second row in \eqref{new diagram} is easily seen to be exact too. 

We already know that after taking $\varinjlim_{\epsilon}$ the last row is exact in the middle. But a simple diagram chase in \eqref{new diagram} yields that the last row is already exact before taking $\varinjlim_{\epsilon}$. Hence the complex \eqref{simplicial} is acyclic. Hence $H^{i}_{\mathrm{\acute{e}t}}(X^{\flat},W^{\dagger}(\mathcal{O}^{\flat}_{X}))=H^{i}_{\mathrm{pro\acute{e}t}}(\mathcal{U},W^{\dagger}(\mathcal{O}^{\flat}_{X}))=0$ for $i>0$.

To obtain an almost vanishing result for the cohomology of the sheaves $W^{\dagger}(\mathcal{O}_{X_{\epsilon}}^{+\flat})$ we need an appropriate definition of almost finite \'{e}taleness for $W(\mathcal{O}^{\flat})$-modules.

\begin{defn}
\begin{itemize}
\item[a)] Let $R$ be a $W(\mathcal{O}^{\flat})$-algebra and $N$ an $R$-module. Then $N$ is uniformly finitely generated if there exists some integer $n$ such that for all $\epsilon\in W(\mathfrak{m}^{\flat})$ there exists an $R$-module $N_{\epsilon}$, finitely generated by $n$ elements, and a map $f_{\epsilon}:N_{\epsilon}\rightarrow N$ such that the kernel and cokernel of $f_{\epsilon}$ are annihilated by $\epsilon$.

\item[b)] Let $R$ be a $W(\mathcal{O}^{\flat})$-algebra and $N$ an $R$-module. Then $N$ is almost projective over $R$ if for all $R$-modules $X$ and all $i>0$, $\mathrm{Ext}_{R}^{i}(N,X)$ is almost zero (i.e. annihilated by $W(\mathfrak{m}^{\flat})$).

\item[c)] A morphism of $W(\mathcal{O}^{\flat})$-modules $A\rightarrow B$ is almost unramified if $A\otimes_{W(\mathcal{O}^{\flat})}W(K^{\flat})\rightarrow B\otimes_{W(\mathcal{O}^{\flat})}W(K^{\flat})$ is unramified and the corresponding idempotent $e\in(B\otimes_{W(\mathcal{O}^{\flat})}W(K^{\flat}))\otimes_{A\otimes_{W(\mathcal{O}^{\flat})}W(K^{\flat})}(B\otimes_{W(\mathcal{O}^{\flat})}W(K^{\flat}))$ defines an almost element in $B\otimes_{A}B$ : that is $e$ lies in $\mathrm{Hom}(W(\mathfrak{m}^{\flat}),B\otimes_{A}B)$ under the map $\epsilon\mapsto\epsilon e$.

\item[d)] A morphism $A\rightarrow B$ of $W(\mathcal{O}^{\flat})$-modules is almost finite \'{e}tale if it is almost unramified, almost projective and uniformly almost finitely presented.
\end{itemize}
\end{defn} 

\begin{prop}\label{almost finite etale}
The assumptions are as above, so $S'/S$ is finite \'{e}tale of degree $n$, $S'^{\circ}/S^{\circ}$ is almost finite \'{e}tale. Then $W^{\dagger}(S'^{\circ})$ is almost finite \'{e}tale over $W^{\dagger}(S^{\circ})$.
\end{prop}
\begin{proof}
Let $\overline{e}\in S'\otimes_{S}S'$ be the idempotent showing that $S'$ is unramified over $S$. Then for all $\overline{\epsilon}\in\mathfrak{m}^{\flat}$ $\overline{\epsilon}\cdot\overline{e}\in S'^{\circ}\otimes_{S^{\circ}}S'^{\circ}$ (see \cite[Proposition 5.23]{Sch12}). Then $e:=[\overline{e}]\in W^{\dagger}(S'\otimes_{S}S')=W^{\dagger}(S')\otimes_{W^{\dagger}(S)}W^{\dagger}(S')$ is an idempotent showing unramifiedness of $W^{\dagger}(S')$ over $W^{\dagger}(S)$. Then for all $\epsilon\in W(\mathfrak{m}^{\flat})$, $\epsilon=(\epsilon_{0},\epsilon_{1},\epsilon_{2},\ldots)$ we have $\epsilon\cdot e=\epsilon[\overline{e}]=(\epsilon_{0}\overline{e},\epsilon_{1}\overline{e},\epsilon_{2}\overline{e},\ldots)$ is an element in $W^{\dagger}(S'^{\circ}\otimes_{S^{\circ}}S'^{\circ})$. Write 
\begin{equation*}
e=\left[\displaystyle\sum_{\stackrel{i=1}{j=1}}^{n}\lambda_{ij}(x_{i}\otimes x_{j})\right]
\end{equation*}
for an $S$-basis $x_{1},\ldots, x_{n}$ of $S'$. Choose $u\in\mathfrak{m}^{\flat}$ such that $ux_{j}=y_{j}\in S'^{\circ}$ for all $j$. Then 
\begin{equation*}
e=\left[\displaystyle\sum_{\stackrel{i=1}{j=1}}^{n}\lambda_{ij}(x'_{i}\otimes y_{j})\right]
\end{equation*}
with $x'_{i}=x_{i}/u$. Using \cite[Lemma A.9]{LZ04} we can also write - for each $z\in S'\otimes_{S}S'$ and $l\in\mathbb{N}$:
\begin{equation*}
z=\sum_{i=1}^{n}\xi_{ij}^{(l)}(x_{i}\otimes x_{j}^{p^{l}})
\end{equation*}
and hence
\begin{equation*}
z=\sum_{i=1}^{n}\xi_{ij}^{(l)}x'_{i,l}\otimes y_{j}^{p^{l}}
\end{equation*}
for $x'_{i,l}=x_{i}/u^{p^{l}}$. We have $y_{j}^{p^{l}}\in S'^{\circ}$ for all $l$, by construction then, in $W^{\dagger}(S')\otimes_{W^{\dagger}(S)}W^{\dagger}(S')$, we can write $e$ as follows:
\begin{equation}\label{how to write e}
e=\sum_{i,j}[\lambda_{ij}(x_{i}')]\otimes[y_{j}]-\sum_{i,j}[0,m_{ij}^{(1)},m_{ij}^{(2)},\ldots]
\end{equation}
where $m_{ij}^{(l)}=\xi_{ij}^{(l)}[x_{i,l}'\otimes y_{j}^{p^{l}}]$ with uniquely determined $\xi_{ij}^{(l)}\in S$. Hence we can write 
\begin{equation}\label{how to write e 2}
e=\sum_{i,j}[\lambda_{ij}(x_{i}')]\otimes[y_{j}]-\sum_{i,j}[0,\xi_{ij}^{(1)}x'_{i,1},\xi_{ij}^{(2)}x'_{i,2},\ldots]\otimes[y_{j}]
\end{equation}
as an element in $W^{\dagger}(S')\otimes_{W^{\dagger}(S)}W^{\dagger}(S')$.

For $\epsilon_{0}\in\mathfrak{m}^{\flat}$, $[\epsilon_{0}]e\in W^{\dagger}(S'^{\circ}\otimes_{S^{\circ}}S'^{\circ})$ as above. The uniqueness of the representation \eqref{how to write e 2} of $e$ implies that $\epsilon_{0}\lambda_{ij}x'_{i}\in S'^{\circ}$ for all $i,j$ and likewise $[\epsilon_{0}][0,\xi_{ij}^{(1)}x'_{i,1},\xi_{ij}^{(2)}x'_{i,2},\ldots]\in W^{\dagger}(S'^{\circ})$ and therefore $[\epsilon_{0}]e\in W^{\dagger}(S'^{\circ})\otimes_{W^{\dagger}(S^{\circ})}W^{\dagger}(S'^{\circ})$. We also see that $V^{s}[\epsilon_{s}]e=p^{s}[\epsilon_{s}]^{1/p^{s}}e\in W^{\dagger}(S'^{\circ})\otimes_{W^{\dagger}(S^{\circ})}W^{\dagger}(S'^{\circ})$ for all $\epsilon_{s}\in\mathfrak{m}^{\flat}$, and therefore for all $\epsilon\in W(\mathfrak{m}^{\flat})$ we have $\epsilon\cdot e\in W^{\dagger}(S'^{\circ})\otimes_{W^{\dagger}(S^{\circ})}W^{\dagger}(S'^{\circ})$ and therefore $W^{\dagger}(S'^{\circ})$ is almost unramified over $W^{\dagger}(S^{\circ})$. 

Let $\epsilon\in W(\mathfrak{m}^{\flat})$ and $\epsilon\cdot e=\sum_{i=1}^{r}a_{i}\otimes b_{i}$ for $a_{i},b_{i}\in W^{\dagger}(S'^{\circ})$ (so in the following $\epsilon$ is fixed). Consider the map 
\begin{equation*}
W^{\dagger}(S'^{\circ})\rightarrow W^{\dagger}(S^{\circ})^{n}, s\mapsto(\mathrm{Tr}_{W^{\dagger}(S')/W^{\dagger}(S)}(s,b_{1}),\ldots, \mathrm{Tr}_{W^{\dagger}(S')/W^{\dagger}(S)}(s,b_{n}))
\end{equation*}
and the map $W^{\dagger}(S^{\circ})^{n}\rightarrow W^{\dagger}(S'^{\circ})$ defined by $(r_{1},\ldots, r_{n})\mapsto\sum_{i=1}^{n}a_{i}r_{i}$. As in the proof of \cite[Proposition 5.23]{Sch12} one easily checks that the composite map $W^{\dagger}(S'^{\circ})\rightarrow W^{\dagger}(S^{\circ})^{n}\rightarrow W^{\dagger}(S'^{\circ})$ is multiplication by $\epsilon$. This shows that $W^{\dagger}(S'^{\circ})$ is a uniformly almost finitely presented almost projective $W^{\dagger}(S^{\circ})$-module and proves Proposition \ref{almost finite etale}.
\end{proof}

Now consider the sequence \eqref{simplicial} at the ``integral level'', namely the total complex associated to the simplicial complex using the presheaves $\mathcal{O}^{+\flat}_{X_{\epsilon}}$, resp. $W^{\dagger}(\mathcal{O}^{+\flat}_{X_{\epsilon}})$. We have
\begin{lemma}\label{integral lemma}
The total complex associated to the simplicial complex 
\begin{equation}\label{simplicial 2}
0\rightarrow\varinjlim_{\epsilon}W^{\dagger}(\mathcal{O}_{X_{\epsilon}}^{+\flat})(X_{\epsilon}^{\flat})\rightarrow\prod_{i}\varinjlim_{\epsilon}W^{\dagger}(\mathcal{O}_{X_{\epsilon}}^{+\flat})(U_{i})\rightrightarrows\prod_{i,j}\varinjlim_{\epsilon}W^{\dagger}(\mathcal{O}_{X_{\epsilon}}^{+\flat})(U_{i}\times U_{j})\mathrel{\substack{\textstyle\rightarrow\\[-0.4ex]
                      \textstyle\rightarrow \\[-0.4ex]
                      \textstyle\rightarrow}}\cdots
\end{equation}
has cohomology killed by $W(\mathfrak{m}^{\flat})$.
\end{lemma}
\begin{proof}
Again we consider the two cases 
\begin{enumerate}[i)]
\item $U_{\epsilon}=\mathrm{Spa}\,S'_{\epsilon}$ finite \'{e}tale faithfully flat over $\mathrm{Spec}\,\S_{\epsilon}$. Since $W^{\dagger}(S'_{\epsilon})$ is finite \'{e}tale over $W^{\dagger}(S_{\epsilon})$, the canonical map
\begin{equation*}
W^{\dagger}(S'_{\epsilon})\otimes_{W^{\dagger}(S_{\epsilon})}W^{\dagger}(S'_{\epsilon})\rightarrow W^{\dagger}(S'_{\epsilon}\otimes_{S_{\epsilon}}S'_{\epsilon})
\end{equation*}
defined by
\begin{equation*}
(\sum_{i\geq 0}p^{i}[x_{i}]^{1/p^{i}})\otimes (\sum_{j\geq 0}p^{j}[x_{j}]^{1/p^{j}})\mapsto \sum_{i,j\geq 0}p^{i+j}[x_{i}^{1/p^{i}}\otimes x_{j}^{1/p^{j}}]
\end{equation*}
for $x_{i},x_{j}\in S'_{\epsilon}$, is an isomorphism. Proposition \ref{almost finite etale} implies that the same map, defined at integral level,
\begin{equation*}
W^{\dagger}(S'^{\circ}_{\epsilon})\otimes_{W^{\dagger}(S^{\circ}_{\epsilon})}W^{\dagger}(S'^{\circ}_{\epsilon})\rightarrow W^{\dagger}(S'^{\circ}_{\epsilon}\otimes_{S^{\circ}_{\epsilon}}S'^{\circ}_{\epsilon})
\end{equation*}
has kernel and cokernel killed by $W(\mathfrak{m}^{\flat})$. Moreover, Proposition \ref{almost finite etale} implies that the complex \eqref{simplicial 2} associated to the acyclic complex 
\begin{equation*}
0\rightarrow S_{\epsilon}\rightarrow S'_{\epsilon}\rightrightarrows S'_{\epsilon}\otimes_{S_{\epsilon}}S'_{\epsilon}\mathrel{\substack{\textstyle\rightarrow\\[-0.4ex]
                      \textstyle\rightarrow \\[-0.4ex]
                      \textstyle\rightarrow}}\cdots\,,
\end{equation*}
namely
\begin{equation*}
0\rightarrow W^{\dagger}(S^{\circ}_{\epsilon})\rightarrow W^{\dagger}(S'^{\circ}_{\epsilon})\rightrightarrows W^{\dagger}(S'^{\circ}_{\epsilon}\otimes_{S^{\circ}_{\epsilon}}S'^{\circ}_{\epsilon})\mathrel{\substack{\textstyle\rightarrow\\[-0.4ex]
                      \textstyle\rightarrow \\[-0.4ex]
                      \textstyle\rightarrow}}\cdots
\end{equation*}
has cohomology killed by $W(\mathfrak{m}^{\flat})$. Indeed, for $\kappa\in W(\mathfrak{m}^{\flat})$, let $\lambda_{\kappa}:W^{\dagger}(S^{\circ}_{\epsilon})^{n}\rightarrow W^{\dagger}(S'^{\circ}_{\epsilon})$ be the map considered in the proof of Proposition \ref{almost finite etale} such that the cohomology of the cone of $\lambda_{\kappa}$ is killed by $\kappa$. The corresponding simplicial complex for $M:=W^{\dagger}(S^{\circ}_{\epsilon})^{n}$
\begin{equation*}
0\rightarrow W^{\dagger}(S^{\circ}_{\epsilon})\rightarrow M\rightrightarrows M\otimes_{W^{\dagger}(S^{\circ}_{\epsilon})}M\mathrel{\substack{\textstyle\rightarrow\\[-0.4ex]
                      \textstyle\rightarrow \\[-0.4ex]
                      \textstyle\rightarrow}}\cdots
\end{equation*}
is obviously acyclic, hence the cohomology of the complex \eqref{simplicial 2} is killed by $\kappa$. Since this holds for all $\kappa$, the claim follows.

\item Now let $\{U_{\epsilon,i}\}$ be a covering by rational subdomains of $\mathrm{Spa}\,S_{\epsilon}$. Using again the same reduction argument as in \cite[\S8.2]{BGR84} it suffices to consider Laurent coverings and then by induction the case $\mathrm{Spa}\,S_{\epsilon}=\mathrm{Spa}\,S_{\epsilon}\langle w^{\epsilon}f\rangle\cup\mathrm{Spa}\,S_{\epsilon}\langle w^{\epsilon}f^{-1}\rangle$ for some $f\in S_{\epsilon}$.

In the limit we have the covering $\mathrm{Spa}\,S^{\dagger}=\mathrm{Spa}\,S^{\dagger}\langle f\rangle\cup\mathrm{Spa}\,S^{\dagger}\langle 1/f\rangle$ where $S^{\dagger}$ is a perfectoid dagger algebra. Since we consider integral elements we can use the argument in \cite[Lemma 6.4]{Sch12} to assume that $f\in S^{\dagger,\circ}$ at the expense of writing $\mathrm{Spa}\,S^{\dagger}=\mathrm{Spa}\,S^{\dagger}\langle f\rangle\cup\mathrm{Spa}\,S^{\dagger}\langle w^{N}/f\rangle$ for some $N$. The Laurent covering does not change. Then, to show Lemma \ref{integral lemma}, we need to show that the sequence
\begin{align}
\label{sequence in case ii}
0\rightarrow\varinjlim_{\epsilon\rightarrow 0}W^{\dagger}(S^{\circ}_{\epsilon})\rightarrow
& \varinjlim_{\epsilon\rightarrow 0}W^{\dagger}(S^{\circ}_{\epsilon}\langle (w^{\epsilon}f)^{1/p^{\infty}}\rangle)\times W^{\dagger}(S^{\circ}_{\epsilon}\langle (w^{\epsilon}w^{N}/f)^{1/p^{\infty}}\rangle) \\
& \rightarrow\varinjlim_{\epsilon\rightarrow 0}W^{\dagger}(S^{\circ}_{\epsilon}\langle (w^{\epsilon}f)^{1/p^{\infty}},(w^{\epsilon}w^{N}/f)^{1/p^{\infty}}\rangle)\rightarrow 0 \nonumber
\end{align}
has cohomology killed by $W(\mathfrak{m}^{\flat})$.

Define, in analogy to the `generic' case in diagram \eqref{new diagram}, 
\begin{align*}
& R^{\circ}(f)=W^{\dagger}(S^{\circ}_{\epsilon}\langle (w^{\epsilon}f)^{1/p^{\infty}}\rangle) \\
& R^{\circ}(\eta)=W^{\dagger}(S^{\circ}_{\epsilon}\langle (w^{\epsilon}\eta)^{1/p^{\infty}}\rangle)\text{ for a variable }\eta\text{ and likewise for }\xi \\
& R^{\circ}(w^{N}/f)=W^{\dagger}(S^{\circ}_{\epsilon}\langle (w^{\epsilon}w^{N}f^{-1})^{1/p^{\infty}}\rangle) \\
& R^{\circ}(\xi,w^{N}/\xi)=W^{\dagger}(S^{\circ}_{\epsilon}\langle (w^{\epsilon}\xi)^{1/p^{\infty}}, (w^{\epsilon}w^{N}\xi^{-1})^{1/p^{\infty}}\rangle)\text{ and likewise for }R^{\circ}(f,w^{N}/f)\,.
\end{align*}
In analogy to diagram \eqref{new diagram} we consider the following commutative diagram at ``integral'' level:
\begin{equation}
\label{integral diagram}
\begin{tikzpicture}[descr/.style={fill=white,inner sep=1.5pt}]
        \matrix (m) [
            matrix of math nodes,
            row sep=2.5em,
            column sep=2.5em,
            text height=1.5ex, text depth=0.25ex
        ]
        { \ & \ & 0 & 0 & \ \\
        \ & \ & I^{\circ}(\xi)\times I^{\circ}(\eta) & I^{\circ}(\xi)_{R^{\circ}(\xi,w^{N}/\xi)} & \ \\
        0 & W^{\dagger}(S^{\circ}_{\epsilon}) & R^{\circ}(\xi)\times R^{\circ}(\eta) & R^{\circ}(\xi,w^{N}/\xi) & 0 \\
        0 & W^{\dagger}(S^{\circ}_{\epsilon}) & R^{\circ}(f)\times R^{\circ}(w^{N}/f) & R^{\circ}(f,w^{N}/f) & 0 \\
        \ & \ & 0 & 0 & \ \\};

        \path[overlay,->, font=\scriptsize]         
        (m-2-3) edge node[above]{$\lambda'$}(m-2-4)
        (m-3-1) edge (m-3-2)
        (m-3-2) edge (m-3-3)        
        (m-3-3) edge node[above]{$\lambda$}(m-3-4)
        (m-3-4) edge (m-3-5) 
        (m-4-1) edge (m-4-2)
        (m-4-2) edge (m-4-3)        
        (m-4-3) edge (m-4-4)
        (m-4-4) edge (m-4-5) 
        (m-1-3) edge (m-2-3)         
        (m-1-4) edge (m-2-4) 
        (m-2-3) edge (m-3-3)
        (m-2-4) edge (m-3-4)
        (m-3-2) edge node[right]{$=$} (m-4-2) 
        (m-3-3) edge (m-4-3)
        (m-3-4) edge (m-4-4) 
        (m-4-3) edge (m-5-3)
        (m-4-4) edge (m-5-4)        
        ;
                                                
\end{tikzpicture}
\end{equation}
$\lambda$ is the map 
\begin{equation*}
(\sum_{i}p^{i}[h_{1}^{(i)}(w^{\epsilon}\xi)]^{1/p^{i}},\sum_{i}p^{i}[h_{2}^{(i)}(w^{\epsilon}\eta)]^{1/p^{i}})\mapsto\sum_{i}p^{i}[h_{1}^{(i)}(w^{\epsilon}\xi)]-h_{2}^{(i)}(w^{\epsilon}w^{N}\eta^{-1})]^{1/p^{i}}
\end{equation*}
\begin{equation*}
I^{\circ}(\xi)=\{\sum_{i}p^{i}[w^{\epsilon}\xi-w^{\epsilon}f]^{r_{i}}[\theta_{i}]\,|\, r_{i}\in\mathbb{Z}_{\geq 0}[1/p],\theta_{i}\in S^{\circ}_{\epsilon}\langle w^{\epsilon}\xi\rangle^{1/p^{\infty}}\}
\end{equation*}
where we only consider elements that still lie in $W^{\dagger}(S^{\circ}_{\epsilon}\langle w^{\epsilon}\xi\rangle ^{1/p^{\infty}})$, i.e. which are overconvergent.
\begin{equation*}
I^{\circ}(\eta)=\{\sum_{i}p^{i}[w^{\epsilon}w^{N}-w^{\epsilon}\eta f]^{r_{i}}[\theta_{i}]\,|\, r_{i}\in\mathbb{Z}_{\geq 0}[1/p],\theta_{i}\in S^{\circ}_{\epsilon}\langle w^{\epsilon}\eta\rangle^{1/p^{\infty}}\}
\end{equation*}
again with the condition that these elements are overconvergent, hence lie in $W^{\dagger}(S_{\epsilon}^{\circ}(w^{\epsilon}\eta)^{1/p^{\infty}})$. $I^{\circ}(\xi)_{R^{\circ}(\xi,w^{N}/\xi)}$ is defined analogously, where we assume that $r_{i}\in S^{\circ}_{\epsilon}((w^{\epsilon}\xi)^{1/p^{\infty}},(w^{\epsilon}w^{N}\xi^{-1})^{1/p^{\infty}})$. The map $\lambda'$ in diagram \eqref{integral diagram} is then induced by $\lambda$. The vertical maps are the obvious ones given by $([h_{1}[w^{\epsilon}\xi],[h_{2}(w^{\epsilon}\eta)])\mapsto (([h_{1}[w^{\epsilon}f],[h_{2}(w^{\epsilon}w^{N}/f)])$. Since 
\begin{equation*}
[w^{\epsilon}\xi-w^{\epsilon}f]^{k}\cdot[w^{N}\xi^{-1}]^{k}=[w^{\epsilon}w^{N}-w^{\epsilon}w^{N}f\xi^{-1}]^{k}
\end{equation*}
for $k\in\mathbb{Z}_{\geq 0}[1/p]$, we see that $\lambda'$ is bijective. As in the generic case \eqref{new diagram} it is easy to see that the middle row in \eqref{integral diagram} is exact. Now consider, in particular, the map $\mu:R^{\circ}(\eta)\rightarrow R^{\circ}(w^{N}/f)$ in the second column in \eqref{integral diagram}. Then, modulo $p$, it follows from the proof of \cite[Lemma 6.4]{Sch12} that
\begin{equation*}
S^{\circ}_{\epsilon}\langle (w^{\epsilon}\eta)^{1/p^{\infty}}\rangle/(I^{\circ}(\eta)\mod p)\rightarrow S^{\circ}_{\epsilon}\langle (w^{\epsilon}w^{N}f^{-1})^{1/p^{\infty}}\rangle
\end{equation*}
is an almost isomorphism, the kernel is killed by $\mathfrak{m}^{\flat}$. This implies that the map
\begin{equation*}
\mu:R^{\circ}(\eta)/I^{\circ}(\eta)\rightarrow R^{\circ}(w^{N}/f)
\end{equation*}
is an almost isomorphism with respect to the ideal $[\mathfrak{m}^{\flat}]=\langle [x]\,|\,x\in\mathfrak{m}^{\flat}\rangle$, hence $[\mathfrak{m}^{\flat}]\ker\mu\subset I^{\circ}(\eta)$.

Now consider the corresponding rings/ideals for the full ring of Witt vectors: $\widehat{R}^{\circ}(f)=W(S^{\circ}_{\epsilon}\langle(w^{\epsilon}f)^{1/p^{\infty}}\rangle)$, likewise $\widehat{R}^{\circ}(\eta)$, $\widehat{R}^{\circ}(w^{N}/f)$, $\widehat{R}^{\circ}(\xi,w^{N}/\xi)$, $\widehat{R}^{\circ}(f, w^{N}/f)$, obtained by taking $p$-adic completions, also $\widehat{I}^{\circ}(\eta)$, the $p$-adic completion of $I^{\circ}(\eta)$. We have the induced map
\begin{equation*}
\widehat{\mu}:\widehat{R}^{\circ}(\eta)\rightarrow\widehat{R}^{\circ}(w^{N}/f)\,.
\end{equation*}
Since $\widehat{I}^{\circ}(\eta)$ is $p$-adically complete and $W(\mathfrak{m}^{\flat})$ is the $p$-adic completion of $[\mathfrak{m}^{\flat}]$, we conclude that $W(\mathfrak{m}^{\flat})\ker\widehat{\mu}\subset\widehat{I}^{\circ}(\eta)$, so $W(\mathfrak{m}^{\flat})$ kills $\ker\widehat{\mu}/\widehat{I}^{\circ}(\eta)$.

On the other hand, returning to overconvergent elements, we see that multiplication by $\lambda\in W(\mathfrak{m}^{\flat})$ maps $\ker\mu$ to overconvergent elements in $R^{\circ}(\eta)$, hence to $I^{\circ}(\eta)$, hence $W(\mathfrak{m}^{\flat})$ kills $\ker\mu/I^{\circ}(\eta)$. A diagram chase in \eqref{integral diagram} yields then that the cohomology of the bottom row in \eqref{integral diagram} is killed by $W(\mathfrak{m}^{\flat})$. 
\end{enumerate}
This finishes the proof of Lemma \ref{integral lemma}.
\end{proof}
This finishes the proof of Proposition \ref{almost zero}.
\end{proof}

\begin{cor}\label{almost quasi-isom}
We have an almost quasi-isomorphism
\begin{equation*}
R\Gamma_{\mathrm{gp}}(\mathbb{Z}_{p}(1)^{d},W^{\dagger}(R_{\infty}^{\flat}))\simeq R\Gamma_{\mathrm{pro\acute{e}t}}(U,W^{\dagger}(\mathcal{O}^{+\flat}_{X}))\,.
\end{equation*}
(This means that the cohomology of the cone is killed by $W(\mathfrak{m}^{\flat})$).
\end{cor}

The d\'{e}calage functor $L\eta_{\mu}$ applied to $R\Gamma_{\mathrm{pro\acute{e}t}}(U,W^{\dagger}(\mathcal{O}^{+\flat}_{X}))$ yields, by definition, $A^{\dagger}\Omega_{\mathcal{X}/\mathcal{O}}^{\mathrm{pro\acute{e}t}}$. We have the following lemma:

\begin{lemma}\label{quasi-isom}
$L\eta_{\mu}$ transforms the almost quasi-isomorphism in Corollary \ref{almost quasi-isom} into a quasi-isomorphism.
\end{lemma}
\begin{proof}
Since $W(\mathfrak{m}^{\flat})\neq W(\mathfrak{m}^{\flat})^{2}$ we cannot apply the usual technology of almost mathematics. Instead we use the following proposition of Bhatt \cite[Lemma 6.14]{Bha18b}:

\begin{prop}\label{Bhatt prop}
Let $\mathfrak{m}$ be an ideal of a ring $A$ and $f$ a non-zero divisor in $\mathfrak{m}$. Let $\sigma:C\rightarrow D$ be a homomorphism of complexes of $A$-modules such that
\begin{enumerate}[i)]
\item the cone of $\sigma$ is killed by $\mathfrak{m}$.
\item all cohomology groups of $C\otimes^{L}A/fA$ contain no non-zero elements killed by $\mathfrak{m}^{2}$. 
\end{enumerate}
Then $L\eta_{f}C\rightarrow L\eta_{f}D$ is a quasi-isomorphism.
\end{prop}

To continue the proof of Lemma \ref{quasi-isom}, property i) in Proposition \ref{Bhatt prop} is satisfied since our map is an almost quasi-isomorphism. For ii) we use Morrow's arguments in his notes \cite[p. 42]{Mor16}. We need to show that the cohomology of $R\Gamma_{\mathrm{gp}}(\Gamma,W^{\dagger}((R^{\dagger}_{\infty})^{\flat}/\mu)))$ has no non-zero elements killed by $W(\mathfrak{m}^{\flat})^{2}$. Now, $R\Gamma_{\mathrm{gp}}(\Gamma,W^{\dagger}((R^{\dagger}_{\infty})^{\flat}/\mu)))$ is quasi-isomorphic to the weak $p$-adic completion of
\begin{equation*}
\bigoplus_{k_{1},\ldots,k_{d}\in\mathbb{Z}[1/p]}K_{A_{\inf}/\mu A_{\inf}}([\epsilon]^{k_{1}}-1,\ldots,[\epsilon]^{k_{d}}-1)
\end{equation*}
which is a dagger completion of Koszul complexes. By \cite[Lemma 7.10]{BMS18}, the cohomology of each of these complexes is the weak $p$-adic completion of a finite direct sum of copies of $A_{\inf}/\mu A_{\inf}([\epsilon]^{k}-1)$ and $A_{\inf}/([\epsilon]^{k}-1)A_{\inf}$ for varying $k\in\mathbb{Z}[1/p]$. It is shown in \cite[p. 42]{Mor16} that these $p$-torsion-free modules contain no non-zero elements killed by $W(\mathfrak{m}^{\flat})^{2}$.
\end{proof}

Now we are going to relate the group cohomology $L\eta_{\mu}R\Gamma_{\mathrm{gp}}(\Gamma,W^{\dagger}((R_{\infty})^{\flat})))$ to a dagger version of the $q$-de Rham complex. In analogy to the $\Gamma$-equivariant decomposition 
\begin{equation*}
A_{\inf}(\widehat{R}_{\infty})=A(\widehat{R})^{\square}\oplus A_{\inf}(\widehat{R}_{\infty})^{\mathrm{nonint}}
\end{equation*}
(see below \cite[Lemma 9.6]{BMS18}) we have the dagger version 
\begin{equation*}
A_{\inf}^{\dagger}(R_{\infty})=W^{\dagger}((R_{\infty})^{\flat})=A(R)^{\square}\oplus A^{\dagger}_{\inf}(R_{\infty})^{\mathrm{nonint}}\,.
\end{equation*}
Here $A(R)^{\square}$ is a lifting of $R$ over $A_{\inf}$, it is \'{e}tale over $A_{\inf}^{\dagger}(\underline{U}^{\pm 1})$. Following the proof of \cite[Lemma 9.6]{BMS18}, we conclude that $H^{i}_{\mathrm{cont}}(\Gamma,A_{\inf}^{\dagger}(R_{\infty})^{\mathrm{nonint}})$ is killed by $\mu$ and hence $L\eta_{\mu}R\Gamma_{\mathrm{cont}}(\Gamma,A_{\inf}^{\dagger}(R_{\infty})^{\mathrm{nonint}})$ vanishes.

On the other hand, we have the $q$-derivation
\begin{equation*}
\frac{\partial_{q}}{\partial_{q}\log(U_{i})}=\frac{\gamma_{i}-1}{[\epsilon]-1}
\end{equation*}
action on $A(R)^{\square}$ (see \cite[\S9.2]{BMS18}). It is well-known that we can compute group cohomology via Koszul complexes. The differentials in the complex 
\begin{equation*}
R\Gamma_{\mathrm{cont}}(\Gamma,A(R)^{\square})=K_{A(R)^{\square}}(\gamma_{1}-1,\ldots,\gamma_{d}-1)
\end{equation*}
are divisible by $\mu=[\epsilon]-1$ and we get by \cite[Lemma 7.9]{BMS18}  
\begin{equation*}
\eta_{\mu}R\Gamma_{\mathrm{cont}}(\Gamma,A(R)^{\square})=K_{A(R)^{\square}}\left(\frac{\gamma_{1}-1}{[\epsilon]-1},\ldots,\frac{\gamma_{d}-1}{[\epsilon]-1}\right)
\end{equation*}
and we define the $q$dagger de Rham complex of $A(R)^{\square}$ as 
\begin{equation*}
q\Omega^{\dagger\bullet}_{A(R)^{\square}/A_{\inf}}:=K_{A(R)^{\square}}\left(\frac{\gamma_{1}-1}{[\epsilon]-1},\ldots,\frac{\gamma_{d}-1}{[\epsilon]-1}\right)\,.
\end{equation*}
Hence, by definition, we have a quasi-isomorphism
\begin{equation*}
L\eta_{\mu}R\Gamma_{\mathrm{cont}}(\Gamma, W^{\dagger}(R_{\infty})^{\flat}))=q\Omega^{\dagger\bullet}_{A(R)^{\square}/A_{\inf}}\,.
\end{equation*}
We will show that the $q$-dagger de Rham complex computes the overconvergent prismatic cohomology after applying $\otimes^{\dagger}_{A_{\inf}}B_{\mathrm{cris}}^{+}$.

Let $A=A_{\inf}$ and let $R/\mathcal{O}$ ($\mathcal{O}=A/d$) be weakly complete, smooth. So $d=1+[\epsilon^{1/p}]+\cdots+[\epsilon^{1/p}]^{p-1}$ which is a generator of $\ker(A\rightarrow\mathcal{O})$. Then 
\begin{equation*}
\varphi(d):=[p]_{q}=\frac{q^{p}-1}{q-1}
\end{equation*} 
for $q=[\epsilon]$ which is the $q$-analogue of $p$. Let $\varphi^{\ast}R=R^{(1)}/(A/[p]_{q})$ be the base change of $R$ along $A\xrightarrow{\varphi}A$. Consider $P_{0}$, a  free $A$-algebra, weakly $(p,d)$-completed with an action of $\varphi$ and a surjection $P_{0}\twoheadrightarrow R$. Let $P_{\bigcdot}:=P_{0}\rightrightarrows P_{1}\mathrel{\substack{\textstyle\rightarrow\\[-0.4ex]
                      \textstyle\rightarrow \\[-0.4ex]
                      \textstyle\rightarrow}}\cdots
$ be the $(p,d)$-weakly completed \v{C}ech-nerve of $A\rightarrow P_{0}$, and let $J_{\bigcdot}=\ker(P_{\bigcdot}\rightarrow R)$ be the kernel of the augmentation and $P_{\bigcdot,\delta}$ the associated $\delta$-$P_{\bigcdot}$-algebra. Then $C_{i}:=P_{i,\delta}^{\dagger}\{\frac{J_{i}}{d}\}$ is a dagger prism ($\dagger$ = $(p,d)$-weakly completed). We know that $C_{\bigcdot}$, the $(p,d)$-weakly completed \v{C}ech-nerve of $A\rightarrow C_{0}$ computes $\Prism_{R/A_{\inf}}^{\dagger}$ equipped with its $\varphi$-action. Then
\begin{align*}
\varphi^{\ast}(C_{\bigcdot})
& =\varphi_{A}^{\ast}(C_{\bigcdot})=\varphi_{P_{\bigcdot,\delta}}^{\ast}(C_{\bigcdot}) \\
& =\varphi_{P_{\bigcdot,\delta}}^{\ast}\left(P_{\bigcdot,\delta}^{\dagger}\left\{\frac{J_{\bigcdot}}{d}\right\}\right)=P_{\bigcdot,\delta}^{\dagger}\left\{\frac{\varphi(J_{\bigcdot})}{[p]_{q}}\right\}
\end{align*} 
where we have used that $A\rightarrow P_{\bigcdot,\delta}$ is a cosimplicial homotopy equivalence. Then we have a ($(p,[q]_{p})$-weakly completed) $[p]_{q}$-PD version of Berthelot's Poincar\'{e} lemma \cite[Lemma V 2.1.2]{Ber74}:

\begin{prop} $\varphi^{\ast}(C_{n})=P_{n,\delta}^{\dagger}\left\{\frac{\varphi(J_{n})}{[p]_{q}}\right\}$ is a $(p,[q]_{p})$-weakly completed $[p]_{q}$-PD-polynomial algebra over $P_{0,\delta}^{\dagger}\left\{\frac{\varphi(J_{0})}{[p]_{q}}\right\}$, so for $N:=P_{0,\delta}^{\dagger}\left\{\frac{\varphi(J_{0})}{[p]_{q}}\right\}$ we have
\begin{equation*}
\varphi^{\ast}(C_{\bigcdot})\cong N^{\dagger}\langle T_{1},\ldots, T_{s}\rangle\left\langle\frac{T_{1}^{p}}{[p]_{q}},\ldots, \frac{T_{s}^{p}}{[p]_{q}}\right\rangle\,.
\end{equation*}
\end{prop}

\begin{lemma}
Let $S$ be a $(p,[p]_{q})$-weakly complete $\mathbb{Z}_{q}\llbracket q-1\rrbracket$-algebra. Then the $q$-dagger de Rham complex 
\begin{equation*}
q\Omega^{\dagger\bullet}_{S\langle T_{1},\ldots, T_{s}\rangle\left\langle\frac{T_{1}^{p}}{[p]_{q}},\ldots, \frac{T_{s}^{p}}{[p]_{q}}\right\rangle}
\end{equation*}
is acyclic, where the derivative is given by the $q$-derivation $\nabla_{q,n}:T^{n}\mapsto [n]_{q}T^{n-1}$.
\end{lemma}
\begin{proof}
Before taking the $(p,[p]_{q})$ weak completion, the proof is analogous to the proof that the usual de Rham complex of a PD-polynomial algebra is acyclic. Taking $(p,[p]_{q})$ weak completions preserves acyclicity. 
\end{proof}

\begin{cor}\label{corollary}
$q\Omega^{\dagger\bullet}_{\varphi^{\ast}(C_{n})/A}$ is quasi-isomorphic to $q\Omega^{\dagger\bullet}_{\varphi^{\ast}(C_{0})/A}$.
\end{cor}

We observe that $q\Omega^{\dagger\bullet}_{\varphi^{\ast}(C_{0})/A}$ is the $q$-dagger de Rham complex over the $(p,[p]_{q})$-weakly completed $q$-PD-envelope $D^{\dagger}_{J_{0},q}(P_{0,\delta})$, which is the Koszul complex $K_{D^{\dagger}_{J_{0},q}(P_{0,\delta})}(\nabla_{q,1},\ldots,\nabla_{q,s})$. But since we have a smooth lifting $A(R)^{\square}$ of $R$ over $A$, we have also considered the $q$-de Rham complex $q\Omega^{\dagger\bullet}_{A(R)^{\square}/A_{\inf}}=K_{A(R)^{\square}}(\nabla_{q,1},\ldots,\nabla_{q,s})$. We explain the relationship between these complexes.

Let again $C_{\bigcdot}$ be the \v{C}ech-Alexander complex that computes $\Prism_{R/A}^{\dagger}$ and let $\varphi^{\ast}C_{\bigcdot}=P_{\bigcdot,\delta}^{\dagger}\left\{\frac{\varphi(J_{\bigcdot})}{[p]_{q}}\right\}=\Prism_{R^{(1)}/A}^{\dagger}$. By the argument in \cite[Theorem 2.12]{BdJ11}, using Corollary \ref{corollary}, the total complex of the simplicial complex $\varphi^{\ast}C_{\bigcdot}$ is quasi-isomorphic to the $q$-dagger de Rham complex
\begin{equation*}
q\Omega^{\dagger\bullet}_{\varphi^{\ast}(C_{0})/A}=K_{D^{\dagger}_{J_{0},q}(P_{0})}(\nabla_{q,1},\ldots,\nabla_{q,s})\,.
\end{equation*}
Now $\varphi^{\ast}(C_{\bigcdot})=A_{\varphi}\otimes_{A}C_{\bigcdot}\cong C_{\bigcdot}$ with 
\begin{align*}
C_{i}
& = A^{\dagger}[X_{1},\ldots, X_{d},T_{1},\ldots, T_{s}]/\langle g_{1}-T_{1}d,\ldots, g_{s}-T_{s}d\rangle \\
& \cong\widetilde{R}^{\dagger}\langle T_{1},\ldots, T_{s}\rangle
\end{align*}
where $\widetilde{R}$ is a dagger lift of $R$ over $A$, previously denoted by $A(R)^{\square}$. The isomorphism comes from the uniqueness of weak formalisations \cite[Theorem 3.3]{MW68}. By \cite[Corollary 12.4]{BMS18} the $q$-dagger de Rham complex $q\Omega^{\dagger\bullet}_{\widetilde{R}^{\dagger}\langle T_{1},\ldots, T_{s}\rangle/A}$ becomes the usual de Rham complex after $\otimes^{\dagger}_{A_{\inf}}B_{\mathrm{cris}}^{+}$. Hence we get
\begin{align*}
q\Omega^{\dagger\bullet}_{\widetilde{R}^{\dagger}\langle T_{1},\ldots, T_{s}\rangle/A}\otimes^{\dagger}_{A_{\inf}}B_{\mathrm{cris}}^{+}
& = \Omega^{\dagger\bullet}_{\widetilde{R}^{\dagger}\langle T_{1},\ldots, T_{s}\rangle/A}\otimes^{\dagger}_{A_{\inf}}B_{\mathrm{cris}}^{+} \\
& \cong \Omega^{\dagger\bullet}_{\widetilde{R}/A}\otimes^{\dagger}_{A_{\inf}}B_{\mathrm{cris}}^{+}\\
& \cong q\Omega^{\dagger\bullet}_{\widetilde{R}/A}\otimes^{\dagger}_{A_{\inf}}B_{\mathrm{cris}}^{+}
\end{align*}
where we have used again \cite[Corollary 12.4]{BMS18}. The symbol $\otimes^{\dagger}_{A_{\inf}}$ denotes the weakly completed tensor product with respect to the $p$-adic topology. Applying again \cite[Theorem 2.12]{BdJ11} in this situation we obtain 
\begin{equation*}
\mathrm{Tot}(C_{\bigcdot})\otimes^{\dagger}_{A_{\inf}}B_{\mathrm{cris}}^{+}\cong  q\Omega^{\dagger\bullet}_{\widetilde{R}/A}\otimes^{\dagger}_{A_{\inf}}B_{\mathrm{cris}}^{+}\,.
\end{equation*}

The final argument for the proof of Theorem \ref{main theorem} is then very similar to the proof of the comparison with Monsky-Washnitzer cohomology in the case $A=W(k)$. Namely, let $M^{r,s}=q\Omega^{\dagger r}_{\varphi^{\ast}(C_{s})/A}$. For $r=0$ we get the cosimplicial complex that computes $\varphi^{\ast}\Prism^{\dagger}_{R/A}$. For fixed $s$, the $q$-de Rham complex $q\Omega^{\dagger\bullet}_{\varphi^{\ast}(C_{s})/A}$ is quasi-isomorphic to $q\Omega^{\dagger\bullet}_{A(R)^{\square}/A}$ after $\otimes^{\dagger}_{A_{\inf}}B_{\mathrm{cris}}^{+}$. Moreover, for $r>0$ the cosimplicial complex $q\Omega^{\dagger r}_{\varphi^{\ast}(C_{\bigcdot})/A}$ is homotopic to zero (analogue of \cite[Lemma 2.15, Lemma 2.17]{BdJ11}) - compare with the proof of Lemma \ref{homotopic to zero}. The vertical totalisation of the simplicial bicomplex computes $A^{\dagger}\Omega_{R/\mathcal{O}}\otimes^{\dagger}_{A_{\inf}}B_{\mathrm{cris}}^{+}$, and the horizontal totalisation computes $\varphi^{\ast}\Prism_{R/A}^{\dagger}\otimes^{\dagger}_{A_{\inf}}B_{\mathrm{cris}}^{+}$.
\end{proof}

Since the $q$-derivation specialises to the usual derivation modulo $q-1$, the $q$-dagger de Rham complex satisfies 
\begin{equation*}
q\Omega^{\dagger\bullet}_{\varphi^{\ast}(C_{0})/A}\otimes^{L}A/[p]_{q}A\cong\Omega^{\dagger\bullet}_{R/\mathcal{O}}\,.
\end{equation*}
Hence we have
\begin{thm}\label{q-de Rham}
\begin{equation*}
\varphi^{\ast}\Prism_{R/A}^{\dagger}\otimes^{L}A/[p]_{q}A\cong\Omega^{\dagger\bullet}_{R/\mathcal{O}}\,.
\end{equation*}
This is the dagger de Rham comparison of overconvergent prismatic cohomology. It is the analogue of \cite[Theorem 1.8 (3)]{BS22}.
\end{thm}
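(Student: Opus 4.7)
The theorem is an immediate corollary of the $q$-dagger de Rham identification carried out in the course of proving Theorem \ref{main theorem}; my plan is to specialise that identification modulo $[p]_q$ and then identify the resulting complex explicitly.

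First I would invoke the quasi-isomorphism
$$\varphi^*\Prism_{R/A}^\dagger \simeq q\Omega^{\dagger\bullet}_{\varphi^*(C_0)/A}$$
already obtained above, which followed from Corollary \ref{corollary} (acyclicity of the $q$-dagger de Rham complex of the $(p,[p]_q)$-weakly completed $[p]_q$-PD polynomial algebra) together with the Koszul description of $q$-derivations on the weakly completed $[p]_q$-PD envelope of the \v{C}ech-Alexander prism $C_0$. Applying the functor $-\otimes^L_A A/[p]_qA$ to both sides of this quasi-isomorphism produces the left-hand side of the theorem on one side, reducing the claim to showing
$$q\Omega^{\dagger\bullet}_{\varphi^*(C_0)/A}\otimes^L_A A/[p]_qA \simeq \Omega^{\dagger\bullet}_{R/\mathcal{O}}.$$

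Since $\varphi(d) = [p]_q$ and $\varphi$ is an automorphism of the perfect prism $A = A_{\mathrm{inf}}$, Frobenius induces an isomorphism $A/d \xrightarrow{\sim} A/[p]_q$, under which the quotient $\varphi^*(C_0)/[p]_q$ is identified with a dagger PD-envelope of the ideal $J_0 \subset P_{0,\delta}^\dagger$ modulo $d$, a weakly complete $\mathcal{O}$-algebra admitting $R$ as a quotient. Under this identification, the $q$-derivation $\nabla_{q,i} : T^n \mapsto [n]_q T^{n-1}$ is meant to specialise to the ordinary derivation $T^n \mapsto n T^{n-1}$, since $[n]_q \to n$ as $q \to 1$. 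Combined with the standard comparison between the Koszul complex of a PD-envelope and the de Rham complex of the associated smooth $\mathcal{O}$-algebra, this yields the desired identification with $\Omega^{\dagger\bullet}_{R/\mathcal{O}}$.

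The principal obstacle — and the step that I expect will require the most care — is justifying the final specialisation of $\nabla_q$ to $\nabla$ after reduction modulo $[p]_q$. The subtlety is that $q - 1$ does not lie in the ideal $[p]_q A$; rather, one must exploit the congruence $[p]_q \equiv (q-1)^{p-1} \pmod{p}$ in $A_{\mathrm{inf}}$, together with the weak $p$-adic completion inherent to the dagger formalism, to argue that the correction term $\nabla_q - \nabla$ (which is a multiple of $q-1$) becomes a nilpotent perturbation that is killed on the cohomology of the Koszul complex after passing to the $(p,[p]_q)$-weakly completed $[p]_q$-PD envelope. This is the dagger analogue of the familiar BMS mechanism (as in \cite[Corollary 12.4]{BMS18}) invoked already in the proof of Theorem \ref{main theorem}. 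With this specialisation established, the Koszul complex of $q$-derivations collapses quasi-isomorphically to the dagger de Rham complex $\Omega^{\dagger\bullet}_{R/\mathcal{O}}$, completing the proof.
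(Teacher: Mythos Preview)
Your outline matches the paper's: both deduce the result from the identification $\varphi^*\Prism_{R/A}^\dagger \simeq q\Omega^{\dagger\bullet}_{\varphi^*(C_0)/A}$ established during the proof of Theorem~\ref{main theorem} (via Corollary~\ref{corollary} and the double-complex argument of \cite{BdJ11}), followed by reduction modulo $[p]_q$. The paper's own argument is the single sentence immediately preceding the theorem.

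You are right to flag the reduction as the delicate point --- the paper's phrase ``modulo $q-1$'' is at best elliptical, since $q-1\notin([p]_q)$ and in fact $q\equiv\zeta_p\pmod{[p]_q}$. But your proposed repair via $[p]_q\equiv(q-1)^{p-1}\pmod p$ and a nilpotent-perturbation argument is aimed at the wrong target: on a plain polynomial ring the reduced $q$-de Rham complex has $U^p\in H^0$ (because $[p]_{\zeta_p}=0$), a genuine cohomological discrepancy that no perturbation of the differential removes. The feature that actually makes the reduction work --- which you mention in your middle paragraph but then set aside in favour of the perturbation --- is that $\varphi^*(C_0)=P_{0,\delta}^\dagger\{\varphi(J_0)/[p]_q\}$ is a $[p]_q$-PD envelope, not a plain polynomial ring; the divided-power-type elements it contains (such as $T^p/[p]_q$ in the model case) are exactly what compensate for $[p]_{\zeta_p}=0$, so that after reduction modulo $[p]_q$ one recovers a PD-type envelope over $A/[p]_q\cong\mathcal{O}$ whose de Rham complex is $\Omega^{\dagger\bullet}_{R/\mathcal{O}}$, as in \cite[\S6]{BS22}. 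Redirect your effort there rather than to the perturbation heuristic.
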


\section{Comparison with an overconvergent de Rham-Witt complex}

In the final section we compare the complex $A^{\dagger}\Omega$ with an overconvergent de Rham-Witt complex, defined for a smooth weak formal $\mathcal{O}$-scheme $(\mathcal{O}=\mathcal{O}_{\mathbb{C}_{p}})$. 

As before, let $\mathrm{Spf}\,R$ be a small affine weak formal scheme over $\mathrm{Spf}\,\mathcal{O}$, $A_{\mathrm{inf}}(\mathcal{O})=W(\mathcal{O}^{\flat})$, equipped with the $p$-$d$-adic topology. Consider a dagger lifting $A(R)^{\square}$ of $R$ over $A_{\inf}(\mathcal{O})$ and let (for $m=\dim\mathrm{Spf}\,R)$)
\begin{equation*}
A(R)^{\square}\rightarrow(A(R)^{\square})^{m}\rightarrow(A(R)^{\square})^{m \choose 2}\rightarrow\ldots
\end{equation*}
be the Koszul complex, or $q$-dagger de Rham complex $q\Omega^{\dagger\bullet}_{A(R)^{\square}/A_{\inf}}$, considered in Section 5. It is a complex of $A_{\inf}$-modules, where each entry is weakly complete with respect to the $(p,d)$-adic topology. 

\begin{lemma}
Under the map $\theta_{\infty}:A_{\inf}(\mathcal{O})\rightarrow W(\mathcal{O})=\varprojlim W_{r}(\mathcal{O})$, the $(p,d)$-adic topology maps to the $p$-$V$-adic topology. More precisely, for $\xi$ generating $\ker(A_{\inf}(\mathcal{O})\rightarrow\mathcal{O})$ such that for $\theta_{r}:W(\mathcal{O}^{\flat})\rightarrow W_{r}(\mathcal{O})$, $r>1$ we have $\theta_{r}(\xi)=V(1)$ and $\theta_{r}$ maps $\xi\varphi^{-1}(\xi)\cdots\varphi^{-s}(\xi)$ to $V^{s+1}(1)$ for $s<r-1$.
\end{lemma}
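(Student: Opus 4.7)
The plan is to take $\xi = d = 1 + [\epsilon^{1/p}] + \cdots + [\epsilon^{1/p}]^{p-1}$, the specific generator of $\ker\theta$ singled out in the introduction, and verify both formulas by a ghost-component calculation. The key input is that $\theta_{r}$ is the unique ring homomorphism $W(\mathcal{O}^{\flat}) \to W_{r}(\mathcal{O})$ whose $i$-th ghost component is $w_{i} \circ \theta_{r} = \theta \circ \varphi^{i}$ for $i = 0, 1, \ldots, r-1$. Since $\mathcal{O}$ is $p$-torsion free, the ghost map $W_{r}(\mathcal{O}) \hookrightarrow \mathcal{O}^{r}$ is injective, so the identities
\[
\theta_{r}(d) = V(1), \qquad \theta_{r}\bigl(d\,\varphi^{-1}(d) \cdots \varphi^{-s}(d)\bigr) = V^{s+1}(1)
\]
reduce to equalities of ghost vectors in $\mathcal{O}^{r}$.

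First I would compute $\theta(\varphi^{k}(d))$ for $k \geq 0$. Using $\varphi([\epsilon^{1/p}]) = [\epsilon]$, for $k \geq 1$ we have $\varphi^{k}(d) = \sum_{j=0}^{p-1}[\epsilon^{jp^{k-1}}]$, and multiplicativity of the sharp map together with $\epsilon^{\sharp}=1$ give $\theta(\varphi^{k}(d)) = p$ for all $k \geq 1$. For $k = 0$, $\theta(d) = 1 + \zeta_{p} + \cdots + \zeta_{p}^{p-1} = 0$. Hence the ghost sequence of $\theta_{r}(d)$ is $(0, p, p, \ldots, p)$ of length $r$. A short induction using $w_{0} \circ V = 0$ and $w_{i} \circ V = p \cdot w_{i-1}$ shows that $V^{s+1}(1) \in W_{r}(\mathcal{O})$ has ghost $(0, \ldots, 0, p^{s+1}, \ldots, p^{s+1})$ with exactly $s+1$ leading zeros; in particular $V(1)$ has ghost $(0, p, p, \ldots, p)$, matching $\theta_{r}(d)$.

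For the product formula, expanding gives
\[
w_{i}\bigl(\theta_{r}(d\,\varphi^{-1}(d) \cdots \varphi^{-s}(d))\bigr) = \theta\bigl(\varphi^{i}(d)\,\varphi^{i-1}(d) \cdots \varphi^{i-s}(d)\bigr) = \prod_{k=i-s}^{i} \theta(\varphi^{k}(d)).
\]
For $i \leq s$ the index range $[i-s, i]$ contains $k = 0$, contributing a factor $\theta(d) = 0$, hence $w_{i} = 0$. For $s+1 \leq i \leq r-1$, every index in $[i-s, i]$ satisfies $k \geq 1$, so each factor equals $p$ and $w_{i} = p^{s+1}$. This matches exactly the ghost vector of $V^{s+1}(1) \in W_{r}(\mathcal{O})$ (which is well-defined since $s < r-1$), so the identity follows by injectivity of the ghost map. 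Taking $\varprojlim_{r}$ yields the analogous formulas for $\theta_{\infty}$.

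The topological claim then follows readily: $\theta_{\infty}(p) = p$ and $\theta_{\infty}(d) = V(1) \in V W(\mathcal{O})$ immediately give continuity from the $(p,d)$-adic to the $p$-$V$-adic topology, while the refined identity $\theta_{\infty}\bigl(d\,\varphi^{-1}(d) \cdots \varphi^{-s}(d)\bigr) = V^{s+1}(1) \in V^{s+1} W(\mathcal{O})$ shows that the $\varphi^{-i}$-twisted products of $d$ (rather than the naive powers $d^{s+1}$, which correspond under $\theta_{\infty}$ only to $V(1)^{s+1} = p^{s}V(1)$) precisely realise the successive Verschiebung ideals. The only real subtlety in the whole argument is the bookkeeping of indices when applying $\varphi^{i}$ to $\xi \varphi^{-1}(\xi) \cdots \varphi^{-s}(\xi)$; once that is in place, everything reduces to the direct ghost-component computation above.
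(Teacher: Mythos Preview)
Your ghost-component argument is correct and self-contained. The paper takes a different route: it cites \cite[Lemma~3.4, Lemma~3.12, Example~3.16]{BMS18} for the identities $\theta_r(\xi)=V(1)$ and $\theta_r(\xi\varphi^{-1}(\xi)\cdots\varphi^{-s}(\xi))=V^{s+1}(1)$ rather than reproving them, and then spends its effort on a point you do not address, namely arguing that each $\varphi^{-i}(d)$ lies in the ideal $(p,d)$ (via $\varphi(d)=[p]_q\equiv p\bmod d$ and an inductive descent). The purpose of that extra step is to place the preimage $d\varphi^{-1}(d)\cdots\varphi^{-s}(d)$ of $V^{s+1}(1)$ inside $(p,d)^{s+1}$, so that the $(p,d)$-filtration on $A_{\inf}$ maps \emph{onto} the $p$-$V$-filtration on $W(\mathcal{O})$, not merely into it. You establish only the ``into'' direction (continuity from $\theta_\infty(p)=p$, $\theta_\infty(d)=V(1)$), which is what the subsequent application --- transporting $(p,d)$-weak completeness to $p$-$V$-weak completeness along $\theta_\infty$ --- actually uses. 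Your observation that the naive power $d^{s+1}$ only hits $p^sV(1)$ is a nice contrast, but note that without locating the twisted product in the $(p,d)$-filtration you have not shown the sharper matching of filtrations the paper is after.
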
 
\begin{proof}
See \cite[Lemma 3.4, Lemma 3.12]{BMS18}. By \cite[Example 3.16]{BMS18} we can take $\xi=d$. We have $\varphi^{-i}(d)\in (p,d)$ for $i\in\mathbb{Z}$, because $\varphi(d)=[p]_{q}\equiv p\mod q-1\equiv p\mod d$ since $d|q-1$ (where $q=[\epsilon]$). Hence $\lambda_{1}d+p=\varphi(d)\in(p,d)$, so $\lambda_{1}\varphi^{-1}(d)+p=d\in(p,d)$ and thus $\varphi^{-1}(d)\in(p,d)$. By induction $\varphi^{-i}(d)\in(p,d)$ for all $i$. 
\end{proof}

Under the base change $\theta_{\infty}:A_{\inf}(\mathcal{O})\rightarrow W(\mathcal{O})$ with kernel $(\mu)=([\epsilon]-1)$, the Koszul complex ($q$-de Rham complex) 
\begin{equation*}
A(R)^{\square}\rightarrow(A(R)^{\square})^{m}\rightarrow(A(R)^{\square})^{m \choose 2}\rightarrow\ldots
\end{equation*}
becomes the usual de Rham complex over $W(\mathcal{O})$, so
\begin{equation*}
q\Omega^{\dagger\bullet}_{A(R)^{\square}/A_{\inf}}\otimes_{A_{\inf}}W(\mathcal{O})\cong\Omega^{\dagger\bullet}_{A(R)^{\square}\otimes W(\mathcal{O})/W(\mathcal{O})}
\end{equation*}
which is a complex of $p$-$V$-weakly complete $W(\mathcal{O})$-modules. Let $\tilde{R}:=A(R)^{\square}\otimes W(\mathcal{O})$, which is a lifting of $R$ over $W(\mathcal{O})$. 

Then $\Omega^{\dagger\bullet}_{\tilde{R}/W(\mathcal{O})}:=\Omega^{\dagger\bullet}_{A(R)^{\square}\otimes W(\mathcal{O})/W(\mathcal{O})}$ is a dagger de Rham complex for the lifting $\tilde{R}$ of $R$ over $W(\mathcal{O})$. In order to globalise the comparison with $A^{\dagger}\Omega$ we need to define an overconvergent de Rham-Witt complex. 

Let 
\begin{equation*}
W\Omega^{\bullet}_{\widehat{\mathcal{O}}[T_{1},\ldots, T_{m}]/\mathcal{O}}:=\varprojlim_{s,n}W_{s}\Omega^{\bullet}_{\mathcal{O}/p^{n}[T_{1},\ldots, T_{m}]/\mathcal{O}}
\end{equation*}
be the $p$-adically completed de Rham-Witt complex of \cite{LZ04}.

Let $z\in\mathcal{O}^{\dagger}\langle T_{1},\ldots, T_{m}\rangle$, $z=\sum_{\kappa}a_{\kappa}\underline{T}^{\kappa}$, which converges weakly with respect to the $p$-adic topology, and $\kappa$ runs through multi-indices in $\mathbb{N}_{0}^{m}$. Define for $\epsilon>0$ $\gamma_{\epsilon}(z)=\inf_{\kappa}\{v_{p}(a_{\kappa}-\epsilon|\kappa|\}$. Then $\gamma_{\epsilon}(z)>-\infty$ for some $\epsilon>0$.

Define $\underline{Y}=(Y_{0},Y_{1},\ldots)\in W(\widehat{\mathcal{O}}\langle T_{1},\ldots, T_{m}\rangle)$ to be in $W^{\dagger}(\mathcal{O}^{\dagger}\langle T_{1},\ldots, T_{m}\rangle)$ if there exists $\epsilon>0$ such that $\gamma_{\epsilon}(Y_{i})>-\infty$ for all $i$, and moreover, if $Y_{i}=\sum_{\kappa(i)}a_{\kappa(i)}\underline{T}^{\kappa(i)}$ we have
\begin{equation*}
\inf_{i,\kappa(i)}\{i+(v_{p}(a_{\kappa(i)}-\epsilon(\kappa(i))p^{-i})\}>-\infty\,,
\end{equation*}
and likewise for $W^{\dagger}\Omega^{\bullet}_{\mathcal{O}^{\dagger}\langle T_{1},\ldots, T_{m}\rangle/\mathcal{O}}$, using the unique description of an element in $W\Omega^{\bullet}_{\widehat{\mathcal{O}}\langle T_{1},\ldots, T_{m}\rangle/\mathcal{O}}$ as a $p$-$V$-adically convergent sum of basic Witt differentials in \cite{LZ04}.

We have the following
\begin{lemma}
Under the base change $\mathcal{O}\rightarrow k$, 
\begin{equation*}
W^{\dagger}\Omega^{\bullet}_{\mathcal{O}^{\dagger}\langle T_{1},\ldots, T_{m}\rangle/\mathcal{O}}\otimes^{L}_{W(\mathcal{O})}W(k)\simeq W^{\dagger}\Omega^{\bullet}_{k[T_{1}\ldots, T_{m}]/k}
\end{equation*}
which is the overconvergent de Rham-Witt complex of the closed fibre defined in \cite{DLZ11}.
\end{lemma}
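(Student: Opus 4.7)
The plan is to reduce the assertion to the basic Witt-differential decomposition of the (overconvergent) de Rham--Witt complex as developed in \cite{LZ04, DLZ11}, and then to verify that the dagger growth condition and the derived base change are both compatible with this decomposition.

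First, I recall from \cite{LZ04} that every element of $W\Omega^{\bullet}_{\widehat{\mathcal{O}}\langle T_{1},\ldots,T_{m}\rangle/\mathcal{O}}$ admits a unique representation as a $p$-$V$-adically convergent sum of basic Witt differentials $e(\xi,\kappa,\mathcal{P})$, indexed by a weight $\kappa\in\mathbb{N}_{0}[1/p]^{m}$ and a partition $\mathcal{P}$ of the support of $\kappa$, with coefficients $\xi\in W(\mathcal{O})$. The same description is available for $W\Omega^{\bullet}_{k[T_{1},\ldots,T_{m}]/k}$ with coefficients in $W(k)$. Under the base change, the map $e(\xi,\kappa,\mathcal{P})\mapsto e(\bar{\xi},\kappa,\mathcal{P})$ defines a term-by-term natural map $W\Omega^{\bullet}_{\widehat{\mathcal{O}}\langle\underline{T}\rangle/\mathcal{O}}\otimes_{W(\mathcal{O})}W(k)\to W\Omega^{\bullet}_{k[\underline{T}]/k}$, which is an isomorphism of topological $W(k)$-modules because the weight-$\mathcal{P}$ components split off as direct summands.

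Second, I would translate the dagger condition across the decomposition. An element is in $W^{\dagger}\Omega^{\bullet}_{\mathcal{O}^{\dagger}\langle\underline{T}\rangle/\mathcal{O}}$ iff for its basic-Witt-differential expansion, there exists $\epsilon>0$ such that the family of coefficients $\xi(\kappa,\mathcal{P})=(\xi_{i}(\kappa,\mathcal{P}))_{i\geq 0}\in W(\mathcal{O})$ satisfies a uniform bound of the form
\begin{equation*}
\inf_{i,\kappa,\mathcal{P}}\Bigl\{i+p^{-i}\bigl(v_{p}(\xi_{i}(\kappa,\mathcal{P}))-\epsilon|\kappa|\bigr)\Bigr\}>-\infty\,.
\end{equation*}
Reducing the coefficients modulo $\mathfrak{m}$ yields $\bar\xi(\kappa,\mathcal{P})\in W(k)$ with $v_{p}(\bar\xi_{i}(\kappa,\mathcal{P}))\in\{0,\infty\}$, and the above bound specialises to the DLZ11 overconvergence condition on $W^{\dagger}\Omega^{\bullet}_{k[\underline{T}]/k}$ (which is purely a growth condition in $|\kappa|$). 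Conversely, any overconvergent element over $k$ lifts termwise by applying Teichmüller (or a chosen set-theoretic section $k\hookrightarrow\mathcal{O}$) to its coefficients, preserving the Gauss-norm growth bound. This gives an isomorphism $W^{\dagger}\Omega^{\bullet}_{\mathcal{O}^{\dagger}\langle\underline{T}\rangle/\mathcal{O}}\otimes_{W(\mathcal{O})}W(k)\cong W^{\dagger}\Omega^{\bullet}_{k[\underline{T}]/k}$ at the level of underived tensor product.

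Third, to upgrade this to a derived statement I would show that the higher Tors vanish. The weight-partition decomposition realises $W^{\dagger}\Omega^{\bullet}_{\mathcal{O}^{\dagger}\langle\underline{T}\rangle/\mathcal{O}}$ as an overconvergent completion of $\bigoplus_{(\kappa,\mathcal{P})}W(\mathcal{O})\cdot e(1,\kappa,\mathcal{P})$, i.e.\ as an inverse/direct limit of flat $W(\mathcal{O})$-modules. Using the fact that $\mathrm{Tor}^{W(\mathcal{O})}_{i}(W(\mathcal{O}),W(k))=0$ for $i>0$ and that both the Gauss-norm completion and the direct sum over the index set $(\kappa,\mathcal{P})$ commute with the base change $-\otimes^{L}_{W(\mathcal{O})}W(k)$ (the relevant cofinal systems are indexed by $\epsilon>0$ and involve only $p$-torsion-free pieces), the derived tensor product coincides with the underived one.

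The main obstacle will be the third paragraph: carefully showing that the overconvergent completion (defined by the Gauss-norm growth condition on basic-Witt-differential coefficients) is compatible with derived base change along $W(\mathcal{O})\to W(k)$. Concretely, one must verify that the filtered colimit over $\epsilon\to 0$ of the $\epsilon$-radius subspaces $W^{\dagger}_{\epsilon}\Omega^{\bullet}$ consists of $W(\mathcal{O})$-flat pieces, so that $\mathrm{Tor}^{W(\mathcal{O})}_{>0}$ vanishes after passing to the colimit; this relies on the $W(\mathcal{O})$-module structure of each weight-partition summand being a rank one free $W(\mathcal{O})$-module scaled by a Gauss-norm condition involving only $v_{p}$ and $|\kappa|$, neither of which is affected by the presence of $\mathfrak{m}$-torsion on the coefficient side.
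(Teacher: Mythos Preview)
Your proposal is correct and its core (your second paragraph) is exactly the paper's own proof: the overconvergence growth condition
\[
\inf_{i,\kappa(i)}\bigl\{i+p^{-i}(v_{p}(a_{\kappa(i)})-\epsilon|\kappa(i)|)\bigr\}>-\infty
\]
specialises, upon reducing coefficients modulo $\mathfrak{m}$ (so that $v_{p}(\bar a_{\kappa(i)})\in\{0,\infty\}$), to the DLZ11 condition $\inf_{i}\{i-p^{-i}\epsilon\deg\overline{Y}_{i}\}>-\infty$. The paper says nothing beyond this one observation.

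Your first and third paragraphs supply structure the paper leaves implicit: the basic-Witt-differential decomposition from \cite{LZ04} as the organising framework, and an explicit argument for why the derived tensor agrees with the underived one. The paper does not address the $\otimes^{L}$ at all. One caution on your third paragraph: the claim that each $(\kappa,\mathcal{P})$-summand is a free rank-one $W(\mathcal{O})$-module is not literally true for fractional weights, since the $W(\mathcal{O})$-action on $V^{u}(\xi\cdot[\underline{T}]^{p^{u}\kappa})$ is twisted through $F^{u}$; you would need to phrase the flatness argument in terms of $p$-torsion-freeness of the $\epsilon$-radius pieces rather than a direct-sum-of-rank-one-frees description. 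But this is a matter of wording, not of strategy.
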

\begin{proof}
This is clear because the condition 
\begin{equation*}
\inf_{i,\kappa(i)}\{i+(v_{p}(a_{\kappa(i)}-\epsilon(\kappa(i))p^{-i})\}>-\infty
\end{equation*}
becomes 
\begin{equation*}
\inf_{i}\{i-p^{-i}\epsilon\deg\overline{Y}_{i})\}=\gamma_{\epsilon}(\overline{Y}_{0},\overline{Y}_{1},\ldots)>-\infty
\end{equation*}
which is the growth condition on $W(k[T_{1},\ldots, T_{n}])$, and likewise for $W\Omega$.
\end{proof}

Then we have
\begin{lemma}\label{lemma poly}
There is a quasi-isomorphism of $p$-$V$-adically weakly complete complexes
\begin{equation*}
\Omega^{\bullet}_{W(\mathcal{O})^{\dagger}\langle T_{1},\ldots, T_{m}\rangle/\mathcal{O}}\simeq W^{\dagger}\Omega^{\bullet}_{\mathcal{O}^{\dagger}\langle T_{1},\ldots, T_{m}\rangle/\mathcal{O}}\,.
\end{equation*}
\end{lemma}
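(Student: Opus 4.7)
The plan is to adapt the comparison between the de Rham complex of a Monsky--Washnitzer lift and the overconvergent de Rham--Witt complex established in \cite{DLZ11} in the perfect-field case to the present relative setting over $\mathcal{O}$. I would first construct a $\varphi$-equivariant $W(\mathcal{O})$-algebra homomorphism
\begin{equation*}
\tau\,:\,W(\mathcal{O})^{\dagger}\langle T_{1},\ldots,T_{m}\rangle\longrightarrow W^{\dagger}(\mathcal{O}^{\dagger}\langle T_{1},\ldots,T_{m}\rangle),\qquad T_{i}\mapsto [T_{i}],
\end{equation*}
using the Teichm\"uller lift. Frobenius equivariance is automatic from $\varphi([T_{i}])=[T_{i}]^{p}=[T_{i}^{p}]=\tau(\varphi(T_{i}))$. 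The main point to verify is that $\tau$ respects the two overconvergence conditions: if $\sum a_{\kappa}\underline{T}^{\kappa}$ satisfies $\gamma_{\epsilon}(\sum a_{\kappa}\underline{T}^{\kappa})>-\infty$ for the Gauss norm on $W(\mathcal{O})^{\dagger}\langle T\rangle$, then its image is an overconvergent Witt vector in the sense of the growth condition $\inf_{i,\kappa(i)}\{i+p^{-i}(v_{p}(a_{\kappa(i)})-\epsilon|\kappa(i)|)\}>-\infty$.

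The map $\tau$ induces a comparison map of complexes
\begin{equation*}
c\,:\,\Omega^{\bullet}_{W(\mathcal{O})^{\dagger}\langle T\rangle/W(\mathcal{O})}\longrightarrow W^{\dagger}\Omega^{\bullet}_{\mathcal{O}^{\dagger}\langle T\rangle/\mathcal{O}},
\end{equation*}
by sending $dT_{i}$ to $d[T_{i}]$ and using the de Rham--Witt differential on the target. I would then show that $c$ is a quasi-isomorphism in two steps. First, passing to the $p$-adic completion of both sides, $c$ becomes the relative de Rham--Witt comparison
\begin{equation*}
\Omega^{\bullet}_{W(\mathcal{O})\langle T\rangle/W(\mathcal{O})}\longrightarrow W\Omega^{\bullet}_{\mathcal{O}\langle T\rangle/\mathcal{O}}
\end{equation*}
of \cite{LZ04}, which is a quasi-isomorphism. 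Second, base-changing along $\mathcal{O}\to k$ recovers precisely the overconvergent comparison proved in \cite{DLZ11} for the polynomial case. Using the unique $p$-$V$-convergent expansion of elements of $W\Omega^{\bullet}$ as sums of basic Witt differentials, one can describe $c(\underline{T}^{\kappa}\,d\underline{T}^{\lambda})$ explicitly and compare the Gauss-norm filtration on the source with the $W^{\dagger}$-growth filtration on the target.

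The main obstacle will be the precise matching of the two overconvergence conditions. One must show that every overconvergent basic Witt differential admits a preimage in $\Omega^{\bullet}_{W(\mathcal{O})^{\dagger}\langle T\rangle/W(\mathcal{O})}$ whose $\gamma_{\epsilon'}$-growth (for some $\epsilon'<\epsilon$) controls the original growth, and conversely that $c$ preserves the overconvergence radius up to a controlled loss. This is essentially a combinatorial calculation involving the Witt-polynomial expansion of $[T_{i}]^{n}$; it proceeds by the same argument as in \cite[\S\S 2--3]{DLZ11}, with the coefficients in $W(\mathcal{O})$ rather than $W(k)$. The extra flexibility needed is that the Gauss norms $\gamma_{\epsilon}$ on $W(\mathcal{O})^{\dagger}\langle T\rangle$ are compatible with $p$-adic valuations on $W(\mathcal{O})$ coming from the filtration $(p,d)^{n}$, so the overconvergence condition in the relative setting decomposes along the Witt coordinates in the same way as in the classical case.
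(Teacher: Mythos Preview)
Your strategy has a genuine gap. The two ``checks'' you propose---that $c$ becomes a quasi-isomorphism after $p$-adic completion (via \cite{LZ04}) and after base change to $k$ (via \cite{DLZ11})---do not combine to yield the result at the overconvergent level. The overconvergent complexes sit strictly between the polynomial and completed versions, and there is no formal dévissage that lets you conclude acyclicity of the cone from its completed and reduced avatars. More seriously, your final paragraph claims that ``every overconvergent basic Witt differential admits a preimage in $\Omega^{\bullet}_{W(\mathcal{O})^{\dagger}\langle T\rangle/W(\mathcal{O})}$''. This is false: the image of your map $c$ consists precisely of the \emph{integral} basic Witt differentials, while the fractional ones (those whose weight function takes genuinely non-integral values, built from $V$ and $dV$) have no preimage. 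So $c$ is not surjective, and the quasi-isomorphism cannot come from matching preimages.

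The paper's argument is more direct and avoids these issues. It invokes the decomposition of $W\Omega^{\bullet}$ into an integral part and a fractional part coming from the basic Witt differential calculus of \cite{LZ04}, observes that this decomposition is compatible with the overconvergence condition (so it persists for $W^{\dagger}\Omega^{\bullet}$), identifies the integral part \emph{on the nose} with $\Omega^{\bullet}_{W(\mathcal{O})^{\dagger}\langle T\rangle/W(\mathcal{O})}$, and notes that the fractional part is acyclic (as it already is before passing to overconvergent subcomplexes). Your comparison map $c$ is exactly the inclusion of the integral summand, so the missing ingredient in your argument is recognising the complementary fractional summand and its acyclicity, rather than trying to lift fractional elements back through $c$.
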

\begin{proof}
Decompose $W^{\dagger}\Omega^{\bullet}_{\mathcal{O}^{\dagger}\langle T_{1},\ldots, T_{m}\rangle/\mathcal{O}}$ into a direct sum of 
\begin{equation*}
W^{\dagger}\Omega^{\bullet,(\mathrm{int})}_{\mathcal{O}^{\dagger}\langle T_{1},\ldots, T_{m}\rangle/\mathcal{O}}:=\Omega^{\bullet}_{W(\mathcal{O})^{\dagger}\langle T_{1},\ldots, T_{m}\rangle/\mathcal{O}}
\end{equation*}
and a fractional part $W^{\dagger}\Omega^{\bullet,(\mathrm{frac})}_{\mathcal{O}^{\dagger}\langle T_{1},\ldots, T_{m}\rangle/\mathcal{O}}$. The direct sum decomposition holds at finite level \cite{LZ04} for $W_{s}\Omega^{\bullet}_{\mathcal{O}/p^{n}[T_{1},\ldots, T_{m}]/\mathcal{O}/p^{n}}$. Then take $p$-$V$-adic completion and identify the overconvergent elements in the integral and fractional part. Details are left to the reader. It is clear that $W^{\dagger}\Omega^{\bullet,(\mathrm{frac})}_{\mathcal{O}^{\dagger}\langle T_{1},\ldots, T_{m}\rangle/\mathcal{O}}$ is acyclic. 
\end{proof}

Now let $\tilde{R}$ be \'{e}tale over $W(\mathcal{O})^{\dagger}\langle T_{1},\ldots, T_{m}\rangle$, lifting $R/\mathcal{O}^{\dagger}\langle T_{1},\ldots, T_{m}\rangle$. We claim that the above Lemma \ref{lemma poly} extends to $\tilde{R}$, i.e. 
\begin{lemma}
There is a quasi-isomorphism
\begin{equation*}
\Omega^{\dagger\bullet}_{\tilde{R}/W(\mathcal{O})}\simeq W^{\dagger}\Omega^{\bullet}_{R/\mathcal{O}}\,. 
\end{equation*}
\end{lemma}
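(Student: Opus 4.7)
The plan is to reduce the general étale case to the polynomial case already treated in Lemma \ref{lemma poly} by invoking étale base change on both sides of the proposed comparison. The overall strategy is a diagram of the shape
\begin{equation*}
\Omega^{\dagger\bullet}_{\tilde R/W(\mathcal O)} \;\simeq\; \tilde R\,\widehat\otimes^{\dagger}_{W(\mathcal O)^{\dagger}\langle\underline T\rangle}\Omega^{\bullet}_{W(\mathcal O)^{\dagger}\langle\underline T\rangle/W(\mathcal O)} \;\simeq\; \tilde R\,\widehat\otimes^{\dagger}_{W(\mathcal O)^{\dagger}\langle\underline T\rangle} W^{\dagger}\Omega^{\bullet}_{\mathcal O^{\dagger}\langle\underline T\rangle/\mathcal O} \;\simeq\; W^{\dagger}\Omega^{\bullet}_{R/\mathcal O}\,,
\end{equation*}
where the outer isomorphisms are étale base change for the dagger de Rham and overconvergent de Rham-Witt complexes, and the middle isomorphism is Lemma \ref{lemma poly} tensored up.

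First I would handle the left hand side. Since $\tilde R$ is étale over the $p$-weakly complete polynomial algebra $W(\mathcal O)^{\dagger}\langle\underline T\rangle$, étale base change for Kähler differentials gives $\Omega^1_{\tilde R/W(\mathcal O)}\cong\tilde R\otimes\Omega^1_{W(\mathcal O)^{\dagger}\langle\underline T\rangle/W(\mathcal O)}$; the usual Koszul description of the de Rham complex of a smooth algebra of $p$-weakly completed polynomial type then upgrades this to a quasi-isomorphism of complexes after $p$-adic weak completion, using that étaleness is compatible with the weak completion (as in \cite[Theorem 3.3]{MW68}) and that the dagger structure on $\tilde R$ is induced from that on $W(\mathcal O)^{\dagger}\langle\underline T\rangle$.

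Second, and this is the main technical point, I would establish the analogous étale base change statement
\begin{equation*}
W^{\dagger}\Omega^{\bullet}_{R/\mathcal O} \;\simeq\; \tilde R\,\widehat\otimes^{\dagger}_{W(\mathcal O)^{\dagger}\langle\underline T\rangle} W^{\dagger}\Omega^{\bullet}_{\mathcal O^{\dagger}\langle\underline T\rangle/\mathcal O}
\end{equation*}
for the overconvergent de Rham-Witt complex. The key input is the rigidity of étale morphisms under Witt vector functors: the étale lift $\mathcal O^{\dagger}\langle\underline T\rangle\to R$ admits a unique étale lift to $W^{\dagger}(\mathcal O^{\dagger}\langle\underline T\rangle)\to W^{\dagger}(R)$, and by the uniqueness of such lifts it must agree with the base change of $\tilde R$ along $W(\mathcal O)^{\dagger}\langle\underline T\rangle\to W^{\dagger}(\mathcal O^{\dagger}\langle\underline T\rangle)$. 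This is exactly the philosophy of \cite[Proposition 2.28]{DLZ12} and \cite[Theorem 3.18]{DLZ11}, transposed from the equicharacteristic setting to the present mixed-characteristic setting over $\mathcal O$. Once the comparison is known for the coordinate ring, one decomposes both sides into an integral part and a fractional part (as was done in the proof of Lemma \ref{lemma poly}), shows that the fractional piece is acyclic, and identifies the integral piece with $\Omega^{\dagger\bullet}_{\tilde R/W(\mathcal O)}$ via étale base change.

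The main obstacle is this second step: adapting the étale base change of the overconvergent de Rham-Witt complex from a smooth algebra over a perfect field (where \cite{DLZ11, DLZ12} operate) to the relative situation over $\mathcal O=\mathcal O_{\mathbb C_p}$. Concretely, one must check that the growth conditions defining $W^{\dagger}(R)$ and its differentials - encoded through the basic Witt differentials and the Gauss norms $\gamma_{\epsilon}$ introduced in the paragraph preceding Lemma \ref{lemma poly} - are preserved by the étale lift, and that the decomposition into integral and fractional Witt differentials remains compatible after the base change. Granting these (essentially combinatorial) compatibilities, the chain of quasi-isomorphisms displayed above yields the claimed comparison.
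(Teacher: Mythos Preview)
Your overall strategy---\'etale base change plus the integral/fractional decomposition inherited from the polynomial case---matches the paper's, but the execution differs in a way worth noting. You attempt to run the argument directly at the dagger level, writing a chain of weakly completed tensor products and invoking an \'etale base change statement for $W^{\dagger}\Omega^{\bullet}$ over $\mathcal{O}$ that you yourself flag as the main obstacle (adapting \cite{DLZ11,DLZ12} from the perfect-field base to the mixed-characteristic base). The paper avoids this obstacle entirely: it first writes $R=\varinjlim_{\epsilon}R_{\epsilon}$ and $\tilde R=\varinjlim_{\epsilon}\tilde R_{\epsilon}$ with each $R_{\epsilon}$ \'etale over a $p$-adically \emph{complete} polynomial algebra, then at each fixed $\epsilon$ descends further to the finite truncations $W_{s}\Omega^{\bullet}_{R_{\epsilon}/p^{n}}$, where \'etale base change for the relative de Rham--Witt complex is already available from \cite{LZ04}. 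The integral/fractional decomposition is established at these finite levels, survives the inverse limit over $s,n$, and the overconvergent subcomplex is then read off before taking $\varinjlim_{\epsilon}$.

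What this buys the paper is that no new \'etale base change theorem for overconvergent Witt complexes needs to be proven: everything is reduced to the classical finite-level statement. Your route would work if the ``essentially combinatorial compatibilities'' you allude to were carried out, but that is precisely the content the paper's limit argument is designed to circumvent. In short: same destination, but the paper takes the $\varinjlim_{\epsilon}$-then-finite-level road rather than proving dagger \'etale base change head-on.
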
 
\begin{proof}
We have $\mathcal{O}^{\dagger}\langle T_{1},\ldots, T_{m}\rangle=\varinjlim_{\epsilon}\widehat{\mathcal{O}}\langle p^{\epsilon}T_{1},\ldots, p^{\epsilon}T_{m}\rangle$ where $\widehat{\mathcal{O}}\langle p^{\epsilon}T_{1},\ldots, p^{\epsilon}T_{m}\rangle$ denotes the $p$-adically convergent power series in $T_{1},\ldots, T_{m}$ that converge on a ball of radius $p^{\epsilon}$. Then $R=\varinjlim_{\epsilon}R_{\epsilon}$, with $R_{\epsilon}$ \'{e}tale over $\widehat{\mathcal{O}}\langle p^{\epsilon}T_{1},\ldots, p^{\epsilon}T_{m}\rangle$, and $\tilde{R}=\varinjlim_{\epsilon}\tilde{R}_{\epsilon}$. By \'{e}tale base change 
\begin{equation*}
W_{s}\Omega^{\bullet}_{\mathcal{O}/p^{n}[p^{\epsilon}T_{1},\ldots, p^{\epsilon}T_{m}]}\otimes\tilde{R}_{\epsilon,s}/p^{n}=W_{s}\Omega^{\bullet}_{R_{\epsilon}/p^{n}}
\end{equation*}
where $\tilde{R}_{s,\epsilon}:=\tilde{R}_{\epsilon}\otimes_{W(\mathcal{O})}W_{s}(\mathcal{O})$, and we have again a direct sum decomposition into an integral part (isomorphic to $\Omega^{\bullet}_{\tilde{R}_{\epsilon,s}/W_{s}(\mathcal{O})}$) and an acyclic fractional part. By taking limits over $s,n$ we get the analogous decomposition for $W\Omega^{\bullet}_{R_{\epsilon}/\mathcal{O}}$ and likewise for $W^{\dagger}\Omega^{\bullet}_{R_{\epsilon}/\mathcal{O}}$, identifying $W^{\dagger}\Omega^{\bullet,\mathrm{int}}_{R_{\epsilon}/\mathcal{O}}$ with $\Omega^{\dagger\bullet}_{\tilde{R}_{\epsilon}/W(\mathcal{O})}$, and an acyclic fractional part. Taking direct limits over $\epsilon$ identifies $W^{\dagger}\Omega^{\bullet}_{R/\mathcal{O}}$ as a subcomplex in $W\Omega^{\bullet}_{\widehat{R}/\mathcal{O}}$ with integral part isomorphic to $\Omega^{\dagger\bullet}_{\tilde{R}/W(\mathcal{O})}$ and acyclic fractional part. 
\end{proof}

For a weak formal smooth $\mathcal{O}$-scheme $\mathcal{X}$ we can define $W^{\dagger}\Omega_{\mathcal{X}/\mathcal{O}}^{\bullet}$ by gluing local data arising in a covering of $\mathcal{X}$ by affine weak formal schemes $\mathrm{Spf}^{\dagger}S$ such that $S$ is \'{e}tale over a weakly completed polynomial algebra and using that $W\Omega^{\bullet}_{\mathcal{X}/\mathcal{O}}$ is a complex of sheaves. Then we have the following final comparison result
\begin{thm}\label{overconvergent comparison}
Let $\mathcal{X}$ be a weak formal smooth $\mathcal{O}$-scheme. Then
\begin{equation*}
A^{\dagger}\Omega_{\mathcal{X}/\mathcal{O}}\otimes^{L}_{A_{\inf}}W(\mathcal{O})\simeq W^{\dagger}\Omega^{\bullet}_{\mathcal{X}/\mathcal{O}}\,.
\end{equation*}
\end{thm}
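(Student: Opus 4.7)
The strategy is to reduce to the small affine case, combine the identification of $A^{\dagger}\Omega$ with a $q$-dagger de Rham complex established in Section 5 with the two lemmas just proved in this section comparing $\Omega^{\dagger\bullet}$ and $W^{\dagger}\Omega^{\bullet}$, and then globalise using the sheaf property of both sides.

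First, I would reduce to the case $\mathcal{X} = \mathrm{Spf}^{\dagger}R$ for $R$ étale over $\mathcal{O}^{\dagger}\langle T_1^{\pm 1},\ldots, T_d^{\pm 1}\rangle$. By the local-to-global lemma of Section 5 one may replace $R\Gamma(\mathcal{X}, A^{\dagger}\Omega_{\mathcal{X}/\mathcal{O}})$ by $A^{\dagger}\Omega_{R/\mathcal{O}}^{\mathrm{pro\acute{e}t}}$. Then by Corollary \ref{almost quasi-isom}, Lemma \ref{quasi-isom} and the $\Gamma$-equivariant decomposition $W^{\dagger}((R_\infty)^{\flat}) = A(R)^{\square} \oplus A^{\dagger}_{\inf}(R_\infty)^{\mathrm{nonint}}$ used in the proof of Theorem \ref{main theorem}, the complex $A^{\dagger}\Omega_{R/\mathcal{O}}^{\mathrm{pro\acute{e}t}}$ is quasi-isomorphic to the $q$-dagger de Rham complex $q\Omega^{\dagger\bullet}_{A(R)^{\square}/A_{\inf}}$.

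Next I would apply $\otimes^{L}_{A_{\inf}}W(\mathcal{O})$. Since $\ker(\theta_{\infty}\colon A_{\inf}\to W(\mathcal{O}))$ is generated by the non-zero-divisor $\mu = [\epsilon]-1$, and $A(R)^{\square}$ is $(p,d)$-completely flat over $A_{\inf}$, this derived base change is computed by the naive one. The $q$-differential $\frac{\gamma_i-1}{[\epsilon]-1}$ specialises modulo $\mu$ to the logarithmic derivative $T_i\,\partial_{T_i}$ on $\tilde{R} := A(R)^{\square}\otimes_{A_{\inf}}W(\mathcal{O})$, which is a dagger lift of $R$ over $W(\mathcal{O})$. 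Since $R$ is étale over a torus, this coincides with the ordinary dagger de Rham complex, giving
\begin{equation*}
q\Omega^{\dagger\bullet}_{A(R)^{\square}/A_{\inf}}\otimes^{L}_{A_{\inf}}W(\mathcal{O}) \simeq \Omega^{\dagger\bullet}_{\tilde{R}/W(\mathcal{O})}.
\end{equation*}
The last lemma of the present section (the étale base change from the weakly completed polynomial algebra to $\tilde{R}$, using the integral/fractional decomposition with acyclic fractional part) then identifies the right-hand side with $W^{\dagger}\Omega^{\bullet}_{R/\mathcal{O}}$, producing the desired local quasi-isomorphism.

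To globalise, fix an étale cover of $\mathcal{X}$ by small affines $\mathrm{Spf}^{\dagger}R_{\alpha}$ of the above form. Both sides of the theorem are complexes of étale sheaves on $\mathcal{X}$ (the left by the definition of $A^{\dagger}\Omega$ via $Rv_{\ast}$ and $L\eta_{\mu}$, the right by the gluing construction of $W^{\dagger}\Omega^{\bullet}_{\mathcal{X}/\mathcal{O}}$), so one passes to the Čech bicomplex associated to the cover and applies the local comparison term by term. The main technical obstacle I anticipate is verifying that the local quasi-isomorphism is functorial in the étale transition maps and independent of the auxiliary choice of lift $A(R_{\alpha})^{\square}$, so that the Čech-level comparisons glue coherently; this should follow from the homotopical rigidity of both $q\Omega^{\dagger\bullet}_{A(R)^{\square}/A_{\inf}}$ with respect to the choice of smooth lift (as used implicitly in Section 5) and from the étale base change properties of $W^{\dagger}\Omega^{\bullet}$ established in the lemmas preceding the theorem, together with the observation that the integral/fractional decomposition is natural.
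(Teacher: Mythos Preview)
Your proposal is correct and follows precisely the route the paper sets up: reduce to the small affine case, identify $A^{\dagger}\Omega$ with the $q$-dagger de Rham complex $q\Omega^{\dagger\bullet}_{A(R)^{\square}/A_{\inf}}$ via the results of Section~5, base change along $\theta_{\infty}$ to obtain $\Omega^{\dagger\bullet}_{\tilde R/W(\mathcal{O})}$, apply the last lemma to reach $W^{\dagger}\Omega^{\bullet}_{R/\mathcal{O}}$, and then globalise. In fact the paper states the theorem without writing out a proof, treating it as immediate from the preceding lemmas; your write-up is exactly the intended argument made explicit, including the functoriality concern for gluing that the paper leaves implicit.
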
 

\end{document}